\definecolor{darkgreen}{RGB}{47,139,79}
\definecolor{darkblue}{RGB}{36,24,130}
\let\oldtocsection=\tocsection
\let\oldtocsubsection=\tocsubsection
\let\oldtocsubsubsection=\tocsubsubsection
\renewcommand{\tocsection}[2]{\hspace{0em}\oldtocsection{#1}{#2}}
\renewcommand{\tocsubsection}[2]{\hspace{1em}\oldtocsubsection{#1}{#2}}
\renewcommand{\tocsubsubsection}[2]{\hspace{2em}\oldtocsubsubsection{#1}{#2}}
\DeclareRobustCommand{\SkipTocEntry}[5]{}
\newtheorem{thm}{Theorem}[section]
\newtheorem{lem}[thm]{Lemma}
\newtheorem{prop}[thm]{Proposition}
\newtheorem{cor}[thm]{Corollary}
\newtheorem{Th}{Theorem}
\theoremstyle{definition}
\newtheorem{Def}[thm]{Definition}
\newtheorem{ex}[thm]{Example}
\newtheorem{non-ex}[thm]{Non-example}
\newtheorem{assum}{Assumption}
\newtheorem{convention}{Convention}
\theoremstyle{remark}
\newtheorem{rem}[thm]{Remark}
\numberwithin{equation}{section}
\newcommand{\al}{\alpha}
\newcommand{\C}{\mathcal{C}}
\newcommand{\del}{\partial}
\newcommand{\De}{\Delta}
\newcommand{\D}{\mathcal{D}}
\newcommand{\diag}{\operatorname{di\overline{ag}}}
\newcommand{\e}{\mathbf{ev}}
\newcommand{\E}{\mathcal{E}}
\newcommand{\F}{\mathrm{F}}
\newcommand{\bF}{\overline{F}}
\newcommand{\ga}{\gamma}
\newcommand{\la}{\lambda}
\newcommand{\La}{\Lambda}
\newcommand{\eps}{\varepsilon}
\newcommand{\RR}{\mathbb{R}}
\newcommand{\R}{\mathcal{R}}
\newcommand{\T}{\mathcal{T}}
\newcommand{\Z}{\mathbb{Z}}
\newcommand{\M}{\mathcal{M}}
\newcommand{\ha}{\frac{1}{2}}
\newcommand{\taue}{\overline\tau_e}
\newcommand{\etab}{\overline\eta}
\newcommand{\maps}{\operatorname{Maps}}
\newcommand{\id}{\operatorname{id}}
\newcommand{\inc}{\hookrightarrow}
\newcommand{\surj}{\twoheadrightarrow}
\newcommand{\lar}{\longleftarrow}
\newcommand{\rar}{\longrightarrow}
\newcommand{\sta}{\stackrel}
\newcommand{\arsim}{\sta{\simeq}{\rar}}
\newcommand{\minus}{\backslash}
\newcommand{\x}{\times}
\newcommand{\ot}{\otimes}
\newcommand{\op}{\oplus}
\newcommand{\reread}{\operatorname{reread}}
\newcommand{\note}[1]
{{\bf [#1]}}
\title{On the invariance of the string topology coproduct}
\author{Nathalie Wahl}
\date{\today}
\begin{document}

\maketitle

\begin{abstract}
 We give a variant of Naef's formula for the failure of invariance of the string topology coproduct under homotopy equivalences, using an obstruction class built from homotopy data associated to a homotopy equivalence as well as the ``fake diagonal''.  The vanishing of our obstruction class can be seen as a way to measure a form of boundedness for homotopy equivalences. We show that the same obstruction rules the failure of invariance for a generalization of the coproduct to higher dimensional loops. 
\end{abstract}

\section*{Introduction}

For a closed oriented manifold $M$, 
the Chas-Sullivan product \cite{CS99} is a  product on the homology of its free loop space $\La M=\operatorname{Maps}(S^1,M)$ of the form
$$\wedge: H_p(\La M)\ot  H_q(\La M)\rar H_{p+q-n}(\La M),$$
where $H_*(-)=H_*(-;\Z)$ denotes singular homology with integral coefficients and $n=\dim M$. 
 Goresky and Hingston defined in \cite{GorHin} a ``dual'' product in cohomology relative to the constant loops
$$\oast: H^p(\La M,M)\ot H^q(\La M,M) \to H^{p+q+n-1}(\La M,M).$$
The associated homology coproduct 
$$\vee: H_p(\La M,M)\rar H_{p-n+1}(\La M\x \La M,M\x \La M\cup \La M\x M) $$
was defined by Sullivan \cite{Sul04}. 
(As we work with homology with integral coefficients, there is no K{\"u}nneth isomorphism and hence the coproduct in integral homology has target $H_*(\La M\x \La M, M\x \La M\cup \La M\x M)$ rather than $H_*(\La M,M)\ot H_*(\La M,M)$.) 
The idea of the Chas-Sullivan product is to intersect the chains of basepoints of two chains of loops and concatenate the loops at the common basepoints. The homology coproduct looks instead for self-intersections  at the basepoint within a single chain of loops and cuts. 
Both structures are represented geometrically by the same figure, 
\vspace{-5mm}
\begin{figure}[h]
\includegraphics[width=6.5cm]{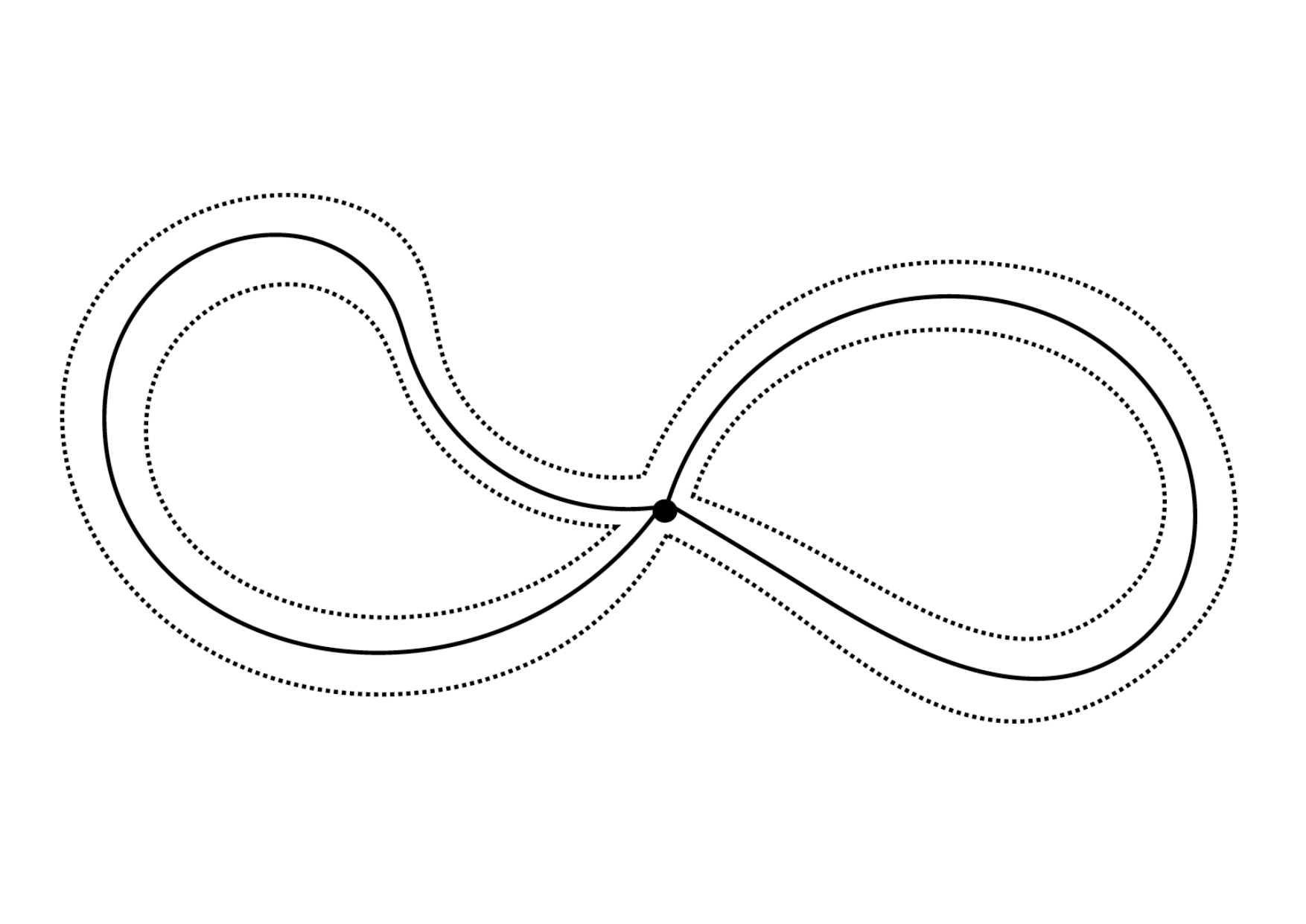}
\end{figure}
\vspace{-7mm}
either thought of  as two loops intersecting at their basepoint, or as a single loop having a self-intersection at its basepoint. 
For the coproduct, it is essential to look for self-intersections with the basepoint at all times $t$ along the loop, including $t=0$ and $t=1$ (where all loops tautologically have a self-intersection!)---Tamanoi showed that the coproduct that only looks for self-intersections at $t=\frac{1}{2}$ is almost completely trivial \cite{Tam}. In contrast, $H^*(\La S^n,S^n)$ for an odd sphere $S^n$ is generated by just 4 classes as an algebra with the cohomology product $\oast$ \cite[Thm 15.3]{GorHin}. The domain $[0,1]$ of the time variable $t$ is the reason why the coproduct 
naturally is an operation in relative homology: it has non-trivial boundary terms coming from the boundary of the interval. 
This operation can be associated to a 1-parameter family of surfaces in the harmonic compactification of the moduli space of Riemann surfaces, making it a so-called {\em compactified operation}. 

Iterations of this coproduct detects {\em intersection multiplicity} of homology classes in the loop space, 
the largest number $k$ such that any representing chain of a given homology class in $H_*(\La M)$ necessarily has a loop with a $k$-fold self-intersection at its basepoint.
(See \cite[Thm 3.10]{HinWah0}.)

\medskip

It has been shown by several authors that the Chas-Sullivan product is homotopy invariant, in the sense 
 that a homotopy equivalence  $f:M_1\to M_2$ induces an isomorphism 
$\La f:H_*(\La M_1)\to H_*(\La M_2)$ of algebras with respect to  the Chas-Sullivan product (see \cite[Thm 1]{CKS},\cite[Prop 23]{GruSal},\cite[Thm 3.7]{Cra}). 
In contrast, a coproduct of the same flavor was used in \cite[Sec 2.1]{CELN} to define the differential in {\em string homology}, a knot invariant that detects the unknot.   Also the paper \cite{NaeWil}
found indications of non-homotopy invariance in the $S^1$--equivariant version of the coproduct.  And indeed, more recently 
Naef showed  through a computation with lens spaces that the coproduct is not homotopy invariant \cite{Nae21}! (See also \cite[Sec 2.3-4]{NRW} for an overview and generalization of Naef's example.)
Note though that, over the rationals and for simply-connected manifolds, Rivera-Wang showed  that the coproduct is homotopy invariant using a Hochschild homology model \cite{RivWan}. The restriction of the coproduct to the based loop space is also known to be homotopy invariant \cite{Hin10,Cli25}. 

\smallskip

Naef suggested in \cite{Nae21} the following transformation formula: given a homotopy equivalence $f:M_1\to M_2$ and writing $\vee_1$ and $\vee_2$ for the string coproducts of $M_1$ and $M_2$ respectively, for $A\in H_*(\La M_1)$, 
$$\vee_2(\La f_*(A))-(\La f\x \La f)_*(\vee_1 A)=\diag(\tau_{THH}(f))\wedge_2\La f_*(A) -\La f_*(A)\wedge_2 \diag(\tau_{THH}(f))$$
for $\tau_{THH}(f)\in H_1(\La M_2,M_2)$ the Dennis trace of the Whitehead torsion of $f$, mapped via the antidiagonal $$\diag:\La M_2\to \La M_2\x \La M_2$$ taking a loop $\ga$ to the pair $(\ga,\ga^{-1})$, and where $\wedge_2$ denotes the Chas-Sullivan product in $M_2$. 
The formula was recently  proved by Naef-Safronov \cite{NaeSaf}, with an alternative proof given by Kenigsberg-Porcelli in \cite{KenPor}  when $\pi_2(M)=0$. 
We prove here a version of this formula, with an alternative obstruction class, built from {\em higher homotopy data} for $f$: let $g$ be a homotopy inverse for $f$, with associated homotopies $h_1:M_1\x I\to M_1$, $h_2:M_2\x I\to M_2$  between the identity and $g\circ f$ and $f\circ g$ respectively, chosen so that there is also a further $K:M_1\x I^2\to M_2$ is a homotopy between $h_2\circ (f\x \id)$ and $f\circ h_1$.  (Given $f$, such a tuple $(g,h_1,h_2,K)$ always exists and is a contractible choice, see Section~\ref{sec:K}.)

\begin{Th}\label{thm:formula}
  Let $f:M_1\to M_2$ be a degree 1 homotopy equivalence between closed oriented manifolds, with associated homotopy data $(g,h_1,h_2,K)$,  and denote by $\vee_1$ and $\vee_2$ the string coproducts of the manifolds $M_1$ and $M_2$. 
  The map induced by $\La f:\La M_1\to \La M_2$ in homology respects the string coproduct, up to an error term given by the following formula: 
  for $A\in H_*(\La M_1)$, 
  $$\vee_2(\La f_*(A))-(\La f\x \La f)_*(\vee_1 A)=\diag(\T_f)\wedge_2\La f_*(A) -\La f_*(A)\wedge_2 \diag(\T_f)$$
  where  
  $\T_f\in H_1(\La M_2,M_2)$, depicted in Figure~\ref{fig:KK}, 
  can be represented by the 1-dimensional family of loops 
  $$\la_{m,m',t}=\overline{K(m,t,-)}*\mu_2*K(m',t,-)*\mu_1 \ \ \  \textrm{for}\ \ (m,m',t)\in \D_f$$ 
  where $\D_f$ is a 1-dimensional submanifold of $M_1^2\x I$ determined by the conditions $f(m)=f(m')$ and $h_1(m,t)=h_1(m',t)$ (made transverse), and   
 where $\mu_1,\mu_2$ are short connecting paths. 
\end{Th}

                                                        \begin{figure}[ht]
                    \begin{lpic}{KK3(0.3,0.3)}
                      \lbl[tr]{32,35;$f\!\circ\! h_1(m,t)$}
                        \lbl[bl]{206,105;$f\!\circ\! h_1(m',t)$}
                     \lbl[tl]{75,52;$h_2(f(m),t)$}
                      \lbl[tl]{125,71;$h_2(f(m'),t)$}
                         \lbl[l]{18,75;$K(m,-,-)$}
                     \lbl[l]{100,110;$K(m',-,-)$}
                      \lbl[bl]{58,148;$f(m')$}
                       \lbl[br]{46,142;$f(m)$}
                         \lbl[t]{90,0;$f\!\circ\! g\!\circ\! f(m)$}
                           \lbl[tl]{130,11;$f\!\circ\! g\!\circ\! f(m')$}
                      \end{lpic}
                    \caption{The dotted line is a loop $\la_{m,m',t}$ in the family defining $T_f$ when $h_1(m,t) \sim h_1(m',t)$}\label{fig:KK}
                    \end{figure}

More explicitly, let $\tilde f\simeq f$ be a small transverse deformation of $f$ and let $\tilde h_1$ be likewise a transverse deformation of $h_1$. 
Then we can choose  $$\D_f=\{(m,m',t)\in M_1^2\x I\ |\ f(m)=\tilde f(m) \ \textrm{and}\ h_1(m,t)=\tilde h_1(m',t)\},$$
 and for $(m,m',t)\in \D_f$, the loop $\la_{m,m',t}$ is the concatenation  
 $$f(h_1(m,t)) \xrightarrow{\overline{K(m,t,-)}}  h_2(f(m),t) \xrightarrow{\mu_2} h_2(f(m'),t) \xrightarrow{K(m',t,-)} f(h_1(m',t))\xrightarrow{\mu_1} (h_1(m,t)).$$ 
(See Definition~\ref{def:Tf} and Proposition~\ref{prop:TfK} for more details.)

A first ingredient in the construction of $\D_f$ is  
  a transverse version of the  {\em fake diagonal}, the space $\De_1^f=\{(m,m')\in M_1\x M_1\ |\ f(m)=f(m')\}$ of points of $M_1\x M_1$ that are ``on the diagonal'' according to $f$. 
As stated in Corollary~\ref{cor:fake}, the difference between the actual diagonal and the fake diagonal $\De_1^f$ is a first obstruction to homotopy invariance of the coproduct.

 \smallskip

Kenigsberg and Porcelli also give a version of Naef's formula in \cite{KenPor},  with an obstruction class built using h-cobordisms. This latter obstruction class is known from the work of Geoghegan-Nicas \cite{GeoNic} to agree with $\tau_{THH}(f)$ at least when $\pi_2(M)=0$. We expect that  the three obstruction classes appearing in the papers \cite{NaeSaf,KenPor} and here all agree, but this does not follow directly from the above statement and its analogue in \cite{NaeSaf,KenPor}. A comparison of the three obstruction classes would yield new descriptions of the trace of Whitehead torsion, which would be of independent interest. See Remark~\ref{rem:KP} for first steps towards a comparison between our obstruction and that of \cite{KenPor}.

\medskip

We use in the present paper the  Cohen-Jones approach to string topology \cite{CohJon}, with the products and coproducts described above defined as lifts to the loop space $\La M$ of the intersection product on $H_*(M)$, using a Thom-Pontryagin construction for the intersection product. In the spirit of e.g.; \cite{GruSal} and \cite{NRW}, we will consider more general {\em intersection products}: 
to  maps $\R\to \E\to M\x M$, as in \cite{NRW},  we associate a map 
$$int_M: C_*(\E,\R)\xrightarrow{int_{U_M}} C_{*-n}(\E|_{U_M},\R|_{U_M}) \xrightarrow{\ \ r\ \ }  C_{*-n}(\E|_M,\R|_M)$$
where the first map comes from capping with the pull-back of the Thom class of the diagonal embedding $M\inc M\x M$, and the second is induced by the retraction $U_M\to M$ of a tubular neighborhood of this embedding onto its 0-section, and is well-defined e.g.; when $\E|_{U_M}\to U_M$ and $\R|_{U_M}\to \E|_{U_M}\to U_M$ are fibrations. See Section~\ref{sec:intprod} for more details. 
This chosen level of generality for intersection products is forced upon us by the proof of Theorem~\ref{thm:formula}, where several ``exotic'' intersection products appear.

We consider in this paper the question of whether a map $F=(F,F_0): (\E_1,\R_1)\rar (\E_2,\R_2)$ preserves the intersection product in two types of situations, as depicted in the following diagram: 
\begin{align*}
 \xymatrix@R=0.5pc@C=1pc{
 \R_1\ar[dddr]\ar[dr] \ar@{-}[rr]^-{F_0} & & \R_2 \ar[dr]\ar@{..>}[dddr] & &&  & \R_1\ar[dr]\ar[dd]\ar@{-}[rr]^-{F_0} & & \R_2 \ar[dr]\ar@{..>}[dd] &
\\
&  \E_1\ar[rr]^-(.4)F \ar[dd]^(.35){p_1} & & \E_2\ar[dd]^(.4){p_2}  &&&  &  \E_1\ar[rr]^-(.35)F \ar[dd]^(.35){p_1} & & \E_2\ar[dd]^(.4){p_2} 
\\
&&&&&  & {M_1}  \ar[dr]\ar@{..}[rr] & & {M_2}\ar[dr] &
\\
\textrm{(A)}   & M_1\x M_2\ar[rr]^-{f\x f}  & & M_2\x M_2 & & &  \textrm{(B)}\ \  & M_1\x M_2\ar[rr]^-{f\x f}  & & M_2\x M_2 
}
\end{align*}
We will call {\em Assumption (A)}, the case where both $\E_i$ and $\R_i$ are fibrations over $M_i\x M_i$, and {\em Assumption (B)}, the case where $\R_i$ is instead a fibration over the diagonal $M_i$. 
While the intersection product relevant to Theorem~\ref{thm:formula} will be as in Assumption (B), where we will see that there is an obstruction to invariance, the proof will use that there is no obstruction to homotopy invariance for intersection products under Assumption (A):

\begin{Th}\label{thm:fibinv0} 
  Let $f:M_1\to M_2$ be a degree 1 homotopy equivalence and $F:(\E_1,\R_1)\to (\E_2,\R_2)$ a map of pairs of fibrations over the map $f\x f: M_1\x M_1\to M_2\x M_2$ (Assumption (A)).
  Then there is a chain homotopy 
  $$F\circ int_{M_1}\simeq_H int_{M_2}\circ F: C_*(\E_1,\R_1)\rar C_{*-n}(\E_2|_{{M_2}}, \R_2|_{{M_2}}).$$
\end{Th}

The above result is essentially stated as Theorem 4.11 in \cite{NRW},
where a sketch proof is given. We give here a full proof as it is an
important ingredient for the proof of Theorem~\ref{thm:formula}.
Theorem~\ref{thm:fibinv0} recovers for example the homotopy invariance of the Chas-Sullivan product when $\E_i=\maps(S^1\sqcup S^1,M_i)$ with $p_i$ the evaluation map at the basepoint of the circles, also in a relative version, see Corollary~\ref{cor:CSrel}, and a more general version for mapping spaces $\E_i=\maps(X_1\sqcup X_2,M_i)$, see Example~\ref{ex:pmap}. 

\medskip

The starting point of the proof of Theorem~\ref{thm:formula} is the description of the string coproduct as a relative version of Tamanoi's trivial coproduct, the coproduct looking only for self-intersections at time $\frac{1}{2}$, see Section~\ref{sec:trivial}. The trivial coproduct satisfies the assumptions of Theorem~\ref{thm:fibinv0}, and thus the failure of invariance of the string coproduct can be described in terms of the failure of the homotopy for the invariance of the trivial coproduct to preserve the relative part. (See Lemma~\ref{lem:alg}.) Here the trivial coproduct will be considered relative to half-constant loops, a situation satisfying Assumption (B) since the restriction of the evaluation map $\e_{0,\ha}:\La M\to M\x M$ to half-constant loops is a fibration over $M$ rather than over $M\x M$.

More generally, for a map of pairs $F:(\E_1,\R_1)\to (\E_2,\R_2)$ over $f\x f$ as in Assumption (B), Theorem~\ref{thm:fibinv0} gives that $F_*: C_*(\E_1)\to C_*(\E_2)$ respects the intersection products up to homotopy, but not necessarily relatively to $\R_1$ and $\R_2$.  
A first step in the proof of Theorem~\ref{thm:formula} is a general computation of the failure of $F_*: H_*(\E_1,\R_1)\to H_*(\E_2,\R_2)$ to preserve the intersection products under Assumption (B): for $A\in H_*(\E_1,\R_1)$, Theorem~\ref{thm:Hcommute0} gives that the failure of invariance can be computed as 
\begin{equation}\label{eq:obs}
  \big(F\circ int_{M_1}- int_{M_2}\circ F\big) (A)=int_{U_1}(L_{N_2}(\al(\del A)\x I)),
  \end{equation}
  where $\del:H_*(\E_1,\R_1)\to H_{*-1}(\R_1)$ is the boundary map in the long exact sequence in homology, $\al$ is a map with the property that it takes the class of constant loops $[M_1]$ to the transverse fake diagonal $\underline{\De}_1^f$ mentioned above (see Proposition~\ref{prop:alphaDe1}), and $L_{N_2}$  is a certain (a priori non-relative!) homotopy equivalence, whose existence in this general context is given by Proposition~\ref{prop:j}. Restricting our attention to the case of mapping spaces $p_i:\E_i=\maps(S,M_i)\rar M_i\x M_i$, with the map $p_i$ evaluating at two points of $S$, 
we give in Section~\ref{sec:invert} an explicit construction of the map $L_{N_2}$ using the higher homotopy data $(g,h_1,h_2,K)$ for $f$.

The formula given in Theorem~\ref{thm:formula} will be obtained from \eqref{eq:obs}. In the case of the string coproduct, for a class $A\in H_*(\La M_1)$, we first note that the relevant boundary 
can be described, {\em in  half-constant loops}, as $[M_1]\wedge_1 A-A\wedge_1 [M_1]$, for $\wedge_1$ the string product of $M_1$. We will show that the operations  $[M_1]\wedge_1-$ and $- \wedge_1 [M_1]$ can be ``pulled outside'' in the right hand side of \eqref{eq:obs}, at the cost of replacing the class of constant loops $[M_1]$ with the class $\diag(\T_f)$. This latter class is essentially the result of applying the sequence of maps on the right hand side of \eqref{eq:obs} to $[M_1]$. The final form of the formula uses Theorem~\ref{thm:fibinv0} for the relative Chas-Sullivan product given in Corollary~\ref{cor:CSrel}, to replace the string product $\wedge_1$ computed in $M_1$ by the product $\wedge_2$ of $M_2$.

\addtocontents{toc}{\SkipTocEntry}
\subsection*{Higher dimensional loops}

The methods of the present paper apply to invariance questions for quite general relative intersection product. We exemplify this here through a
generalization of the string coproduct to higher dimensional loops, that will make some features of the formula in Theorem~\ref{thm:formula} more visible. 

Let $\La^rM:=\maps(S^r,M)=\maps((I^r/\del I^r),M)$ and consider the evaluation map
$$\e_{{\bf 0},{\bf t}}: \La^rM\x I^r\to M\x M$$
evaluating the loops at the basepoint ${\bf 0}$ and at time ${\bf t}=(t_1,\dots,t_r)\in I^r$. We define the higher coproduct $\vee^r$ as the operation  
\begin{multline*}
  \vee^r: H_*(\La^rM)\xrightarrow{\x I^r} H_{*+r}(\La^rM\x I^r,\La^rM\x \del I^r)\xrightarrow{int_M} H_{*+r-n}(\e_{{\bf 0},{\bf t}}^{-1}(\De M),\La^rM\x \del I^r) \\
  \xrightarrow{\reread} H_{*+r-n}(\maps(T^r,M),\R_{T^r})
  \end{multline*}
  where $T^r:=S^1\x S^{r-1} $ and the last map cuts the self-intersecting sphere $S^r=I^r/\del I^r$ to form a kind of torus, taking a pair $(\ga,{\bf t})$ to the composition
  $$T^r=S^1\x S^{r-1}\cong S^1\x \del I^r \rar (I^r/\del I^r)/_{{\bf 0}\sim {\bf c}=(\ha,\dots,\ha)} \xrightarrow{\ \theta_{\bf t}\ }  (I^r/\del I^r)/_{{\bf 0}\sim {\bf t}}\xrightarrow{\ \ga\ } M$$
 where the first map uses that the rays between the center ${\bf c}\in I^r$ and the points of $\del I^r$ become circles under the equivalence ${\bf 0}\sim {\bf c}$, and where $\theta_{\bf t}$ is a reparametrization map defined by piecewise linear scaling in each of the $r$ directions. This ``reread" map takes $\La^r\x \del I^r$ to a subspace $\R_{T^r}$ of half-constant maps. 
 (See Section~\ref{sec:higher} for more details.) 

  When $r=1$,  after identifying $\maps(S^1\x S^0,M)$ with $\La M\x \La M$,  the reread map precisely corresponds to the map that cuts a self-intersecting loop to reread it as a left and a right loop, and the coproduct $\vee^r$ specializes to the string coproduct $\vee$ when $r=1$.

\smallskip

There is a map $s^{r-1}:\La M=\maps(S^1,M) \to \maps(T^r,M)$ defined by precomposing with the projection $T^r=S^1\x S^{r-1}\to S^1$ on the first factor. This map respects constant maps, and 
hence the obstruction class $\T_f\in H_1(\La M_2,M_2)$ defines a class $s^{r-1}\T_f\in H_{1}(\maps(T^{r},M_2),M_2)$. In the case $r=1$, the class $s^0\T_f$ identifies the anti-diagonal $\diag(\T_f)$ appearing in Theorem~\ref{thm:formula}.

\smallskip

  Theorem~\ref{thm:formula} generalizes to the following:

\begin{Th}\label{thm:formula2}
  Let $f:M_1\to M_2$ be a degree 1 homotopy equivalence between closed oriented manifolds and denote by $\vee^r_1$ and $\vee^r_2$ the higher string coproducts of the manifolds $M_1$ and $M_2$ defined as above. 
  The map induced by $\La^r f:\La^r M_1\to \La^r M_2$ in homology respects these coproducts, up to an error term given by the following formula: for $A\in H_*(\La^r M_1)$, 
  $$\vee^r_2\big(\La^r f_*(A)\big)\ -\ \maps(T^{r},f)_*\big(\vee^r_1 A\big)=(-1)^{(r-1)n}s^{r-1}\T_f\,\wedge^2_{\del I^r}\,\La^r f_*(A)$$
  in $H_{*+r-n}(\maps(T^{r},M_2),\R_{T^r})$, where
  \begin{multline*}
    \wedge_{\del I^r}^2: H_*\big(\maps(T^{r},M_2)\x\La^r M_2\big) \xrightarrow{int_{M_2}}
    H_{*-n}\big(\maps(T^{r},M_2)\x_{M_2}\La^r M_2\big)  \\
    \xrightarrow{[-,-]} H_{*+r-1-n}\big(\maps(T^{r},M_2)\big)\end{multline*}
  is an intersection product composed with a form of Browder bracket $[-,-]$ (see Section~\ref{sec:higher}).
  \end{Th}

In particular the failure of invariance for such operations is ruled by the same obstruction class $\T_f$ as in the case of single loop spaces $\La M$ in Theorem~\ref{thm:formula}. 
Both results will be proved together, Theorem~\ref{thm:formula} being the case $r=1$ in Theorem~\ref{thm:formula2}.

\medskip

\noindent
{\em Organization of the paper.}  In Section~\ref{sec:int}, we give a  proof of the homotopy invariance of the homology intersection product  (Proposition~\ref{prop:invint}) 
using only its definition in terms of capping with a Thom class. This proof is not at all efficient, but the intermediate results proved in that section will be used later in the paper. 
In Section~\ref{sec:intprod}, we define lifted intersection products, in the generality needed in the paper. We treat there the invariance property of intersection products over a fixed manifold. Section~\ref{sec:inv1} considers the question of  invariance of intersection products over a homotopy equivalence, proving Theorem~\ref{thm:fibinv0} in Section~\ref{sec:assA}, the invariance under Assumption (A),  and Theorem~\ref{thm:Hcommute0} in Section~\ref{sec:assB}, giving a first description of the failure of invariance under Assumption  (B). 
In Section~\ref{sec:map}, we restrict attention to the case of mapping spaces, and use higher homotopy data to give in Proposition~\ref{prop:L12} an explicit version of the homotopy $L_{N_2}$ appearing in Theorem~\ref{thm:Hcommute0}. This description is used in  Section~\ref{sec:finalproof} to deduce Theorems~\ref{thm:formula} and~\ref{thm:formula2}. 
Finally Appendix~\ref{app:tubcomp} details the relationship between the tubular neighborhoods of two composable embedding and the tubular neighborhood of their composition.

\addtocontents{toc}{\SkipTocEntry}
\subsection*{Acknowledgements}
This paper originates in a joint work with Nancy Hingston, and the spirit of our joint paper \cite{HinWah0} is still very much present here. 
The first version of this paper contained a ``proof'' of homotopy
invariance of the coproduct that was contradicted by Naef's example
\cite{Nae21}. The error in the original version of the paper originated in a bad choice of relative term making a suspension ``isomorphism'' not an isomorphism anymore. 
We have
here corrected that relative term to revive the isomorphism part of the 
suspension isomorphism, and this makes the  less standard map denoted $\llbracket
\bar p^*\taue\cap\rrbracket$ in e.g.; Diagram~\eqref{equ:copdiag2.0} not necessarily an isomorphism, a fact tightly related to the failure of the homotopy called $L_{N_2}$ in Theorem~\ref{thm:Hcommute0} (and Proposition~\ref{prop:L12}), to be a relative homotopy equivalence. 
 We thank Florian Naef for discussing his
example with us early on, as well as for extended discussions on the topic of invariance over the following years. The revision of this paper has also been influenced by exchanges with Alexandru Oancea,  Kai Cieliebak and Noah Porcelli, and we thank Maxime Ramzi for helpfull discussions about homotopy equivalences. 
In the course of this work, the  author was
supported  by the Danish National Research Foundation through
the Centre for Symmetry and Deformation (DNRF92) and the Copenhagen Centre for Geometry and Topology (DNRF151), the  European Research
Council (ERC) under the European Union's Horizon 2020 research and innovation programme (grant agreement No.~772960), the Max Planck Institute in Bonn, the Heilbronn Institute for Mathematical Research, and 
would like to thank the Isaac Newton Institute for Mathematical Sciences, Cambridge, for support and hospitality during the programme Homotopy Harnessing Higher Structures (EPSRC grant numbers EP/K032208/1 and EP/R014604/1).

\tableofcontents

\section{Homotopy invariance of the intersection product}\label{sec:int}

The easiest way to prove the homotopy invariance of the intersection product on the homology of closed manifolds is to use its definition
as Poincar\'e dual of the cup product, and use the homotopy invariance of the cup product. In fact, the compatibility is easily checked with
any degree $d$ map, up to appropriate scaling, see e.g., \cite[VI, Prop 14.2]{Bredon}. We will present here  a proof of the homotopy invariance using instead the definition of the intersection product via capping with a Thom class, as a warm-up, and partial preparation, for the invariance of the string topology coproduct.

\medskip

Let $M$ be a closed oriented Riemannian manifold of injectivity radius $\rho$, and let $TM$ denote its $\eps$--tangent bundle, where $\eps$ is small compared to $\rho$. (To be precise, the paper \cite{HinWah0} whose constructions we use assumes $\eps<\frac{\rho}{14}$.) We will also assume that $\eps<\frac{1}{2}$.

The  normal bundle of the diagonal embedding
$M\inc M\x M$ is isomorphic to $TM$, and we can choose as tubular embedding the map   $\nu_M:TM\to M\x M$ given by $\nu_M(m,V)=(m,\exp_mV)$. (By an {\em embedding}, we will in this paper always mean a $C^1$--map that is an immersion and a homeomorphism onto its image.)
Note that $\nu_M$ has image the $\eps$--neighborhood of the diagonal 
 $$U_{M}=\{(m,m')\in M\x M \ |\ |m-m'|<\eps\}\subset M\x M.$$
 Let  $\tau_M\in C^n(M\x M,(\De M)^c)$ be a chosen Thom class for this tubular neighborhood, in cochains relative to the complement of the diagonal $\De: M\to M\x M$, and
 $r\colon U_M\to M$ 
 the associated retraction. After going through ``small simplices'' to get a chain inverse $\rho$ to excision, capping with the Thom class defines a map
$$[\tau_M\cap]\colon   C_{p+q}(M\x M,(\De M)^c)\xrightarrow{\ \rho\ } C_*(U_M,(\De M)^c) \xrightarrow{\tau_M\cap} C_{*-n}(U_M).$$
(See eg., \cite[A.2]{HinWah0} for a detailed description.)

Up to sign, the intersection product on a closed manifold $M$ can be defined as the composition 
$$C_p(M)\ot C_q(M) \xrightarrow{\x} C_{p+q}(M\x M)\rar  C_{p+q}(M\x M,(\De M)^c)\xrightarrow{[\tau_M\cap]} C_{p+q-n}(U_M) \xrightarrow{\ r\ } C_{p+q-n}(M)$$ 
(See eg., \cite[App B]{HinWah0}.)

\medskip

Let $f:M_1\to M_2$ be a homotopy equivalence between two closed oriented Riemannian manifolds $M_1$ and $M_2$. In particular  $f$ has degree $\pm 1$, that is $f[M_1]=\pm [M_2]$. 
Let  $U_i=U_{M_i}$ be the $\eps_i$--tubular neighborhoods of the diagonal in $M_i\x M_i$ as above, with Thom class $\tau_{M_i}$ and $r_i:U_i\to M_i$ the associated retraction.
After changing $f$ in its homotopy class, we may assume that
\begin{itemize}
\item $f$ is smooth;
  \item $\eps_1$ is small enough so that $f(U_1)\subset U_2$,
  \end{itemize}
  where the last property uses that $M_1$ is assumed closed and hence is compact. 
  Leaving out the cross-product, which is well-known to be natural, we want to show that the diagram 
$$\xymatrix{  H_{p+q}(M_1\x M_1)\ar[r]\ar[d]_{f\x f}&  H_{p+q}(M_1\x M_1,(\De M_1)^c)\ar[r]^-{[\tau_{M_1}\cap]} &  H_{p+q-n}(U_{1})\ar[d]^{f\x f} \ar[r]^-{r_1}&  H_{p+q-n}(M_1)\ar[d]^f\\
H_{p+q}(M_2\x M_2)\ar[r]&  H_{p+q}(M_2\x M_2,(\De M_2)^c)\ar[r]^-{[\tau_{M_2}\cap]} &  H_{p+q-n}(U_{2}) \ar[r]^-{r_2}&  H_{p+q-n}(M_2)
}$$
commutes. 
One difficulty is that we cannot assume that, in general, the map $f\x f$ does not take  the complement of the diagonal in $M_1\x M_1$ to the corresponding complement in $M_2\x M_2$, so that we do not have a vertical map in the second column in the diagram. For the same reason, we cannot use $f$ directly to compare the Thom classes $\tau_{M_1}$  and $\tau_{M_2}$. To make up for this, we will replace $f$ by an embedding and use instead the larger diagram (\ref{equ:intdiag}) below.

\medskip

Pick an embedding $e: M_1\inc D^k$, for some $k$ large. Then  the map $(f,e)\colon M_1\to M_2\x D^k$ is an embedding too and we can consider 
the composition of embeddings
\begin{equation}\label{copembeddings}
\xymatrix{ \ar@{^(->}[rr]^{\De_{1}} M_1 \ar@{_(->}[drr]_{(\id,f,e)}  && M_1 \x M_1\ar@{^(->}[d]^{id\x (f,e)}  \\
&& M_1\x M_2 \x D^k. 
}
\end{equation}
To this situation, we can associate a diagram 
\begin{equation*}
\xymatrix{TM_1\ar[d]\ar@{^(->}[drr]^{\nu_1=\nu_{M_1}} &&  \ \ \ \ \ \ N_2=M_1\x N(f,e)\ar[d]_{p_{N_2}}\ar@{^(->}[drr]^{\ \ \nu_2=id\x \nu_e} && \\
M_1 \ar@{^(->}[rr]^-{\De_1}&& M_1\x M_1 \ar@{^(->}[rr]^-{id\x(f,e)}&& M_1\x M_2\x D^k,}
\end{equation*}
where $\nu_1:=\nu_{M_1}:TM_1\to M_1\x M_1$ is the tubular neighborhood of $\De M_1$ defined  above, 
and $N(f,e)$ is the normal bundle of $(f,e)$ with 
$\nu_e:N(f,e)\to M_2\x D^k$ a chosen tubular embedding; we choose here, as in \cite[Thm 12.11]{BroJan}, to define $\nu_e$ using the restriction of the exponential map of $M_2\x D^k$ to the subbundle $N(f,e)\le TM_2\op \RR^k$ of vectors normal to $d(f,e)TM_1$. 
Pick  a Thom class $\tau_e\in C^k(M_2\x D^k,((f,e)M_1)^c)$  for $\nu_e$. Then 
\begin{equation}\label{equ:tau2} 
\taue:=1\x \tau_e\in C^k(M_1\x M_2\x D^k,M_1\x ((f,e)M_1)^c)
\end{equation}
is a Thom class of $\nu_2$. 
Applying Proposition~\ref{prop:comptub}, 
we now have  the following:

\begin{prop}\label{prop:tub12} Consider the sequence of embeddings $M_1\sta{\De_{1}}\inc M_1\x M_1\sta{id\x (f,e)}\inc M_1\x M_2\x D^k$ and let $\nu_1,\nu_2$ and $\tau_1=\tau_{M_1},\tau_2=\taue$ be the embeddings and Thom classes chosen above. Then the following hold: 
\begin{enumerate} 
\item The bundle $N_{3}:=TM_{1}\oplus N(f,e) \rar M_1$ is isomorphic to the normal bundle of the composed embedding $(id,f,e):M_1\to M_1\x M_2\x D^k$. 
\item There is an isomorphism $\hat\nu_1:N_3 =TM_{1}\oplus N(f,e)\xrightarrow{\cong} N_2|_{U_1}=(M_1\x N(f,e))|_{U_1}$, as bundles over $TM_1\cong U_1$, with the property that
  $$\nu_{3}:=\nu_2\circ\hat\nu_1:N_3\rar M_1\x M_2\x D^k$$ 
  is a tubular embedding for $(id,f,e)$.
  Explicitly,  $$\nu _{3}(m,V,W)=\nu_2(\nu _{1}(m,V),W^{\prime })=(m,\nu_e(\exp_mV,W'))$$
  for $W^{\prime }\in (N(f,e))_{\nu _{1}(m,V)}$ the parallel transport of $W$ along the path $\nu _{1}(m,tV)$.
\item  The class $$\tau_{3}:=p_{N_2}^*\tau_{M_1}\cup \taue\in C^{n+k}(M_1\x M_2\x D^k,M_1^c)$$ is a Thom class for the tubular embedding $\nu_{3}$, where $M_1$ is identified with its image under the map $(\id,f,e)$. Moreover, the diagram 
$$\xymatrix{H_{*+k}(M_1\x M_2\x D^k)\ar[rr]^-{\llbracket \tau_e\cap\rrbracket} \ar[drr]_{\llbracket\tau_{3}\cap\rrbracket} && H_{*}(M_1\x M_1)\ar[d]^{\llbracket\tau_{M_1}\cap\rrbracket}  \\
& & H_{*-n}(M_1) }$$
commutes,  where we use the notation $\llbracket\tau\cap\rrbracket\colon C_*(X) \to C_*(Y)$ for maps that are compositions of the form 
$$\llbracket\tau\cap\rrbracket : C_*(X)\to C_*(X,Y^c) \xrightarrow{[\tau\cap]} C_*(U)\xrightarrow{r} C_*(Y)$$ 
for $U$ a tubular neighborhood of $X$ inside $Y$ with associated Thom class $\tau$ and retraction $r$.    
\end{enumerate}
\end{prop}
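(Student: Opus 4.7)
The plan is to recognize that the three assertions are direct specializations of Proposition~\ref{prop:comptub} in the appendix, which treats an abstract pair of composable embeddings equipped with tubular embeddings and Thom classes. So the work reduces to (a) checking that the hypotheses of that general statement are met by the data $(\De_{M_1}, id\x(f,e), \nu_1, \nu_2, \tau_{M_1}, 1\x\tau_e)$ of (\ref{copembeddings}), and (b) unwinding the conclusions of the general result in our specific notation.

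For (1), I would use that for a composition of embeddings $A\inc B\inc C$ the normal bundle is $N(A\to B)\op i^*N(B\to C)$ where $i$ is the inclusion. Here $N(\De_{M_1})\cong TM_1$ by the standard identification, and $N(id\x(f,e))$ at a point of $\De_{M_1}M_1\subset M_1\x M_1$ is $0\op N(f,e)$. Pulling back along $\De_{M_1}$ gives $N(f,e)$ as a bundle over $M_1$, so $N_3\cong TM_1\op N(f,e)$ as claimed.

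For (2), the map $\hat\nu_1$ should be built by extending $\nu_1$ so that it also transports vectors in the $N(f,e)$--fiber along the short geodesic from $m$ to $\exp_m V$. Concretely, $(m,V,w)\mapsto (m,(\exp_m V,\Phi_{m,V}w))$ where $\Phi_{m,V}$ identifies the fiber of $N(f,e)$ at $m$ with that at $\exp_m V$ via a fixed connection (or, equivalently, via the local trivialization of $N(f,e)$ used implicitly in the appendix). Because $\hat\nu_1$ covers $\nu_1\colon TM_1\xrightarrow{\cong}U_1$ and is a fiberwise isomorphism, it is a bundle isomorphism onto $p_2^{-1}(U_1)$. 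That $\nu_3=\nu_2\circ\hat\nu_1$ is a tubular embedding of $(id,f,e)$ then follows from the construction together with the fact that its restriction to the zero section is $(id,f,e)$ and its derivative along the normal directions is the identification in (1).

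For (3), since $\hat\nu_1$ extends $\nu_1$ fiberwise, the pullback $\hat\nu_1^*(p_2^*\tau_{M_1})$ is a Thom class for $\hat\nu_1$ itself, and $1\x\tau_e$ is a Thom class for $\nu_2$ by (\ref{equ:tau2}); their cup product $\tau_3=p_2^*\tau_{M_1}\cup(1\x\tau_e)$ is then a Thom class for the composition $\nu_3$ by the standard formula for Thom classes of composable tubular embeddings. Commutativity of the triangle of cap products is then the identity $(\alpha\cup\beta)\cap x=\alpha\cap(\beta\cap x)$, combined with the compatibility of the relative structures, exactly as assembled in Proposition~\ref{prop:comptub}. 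The main obstacle in an honest proof is verifying in (2) that the chosen $\hat\nu_1$ is not merely a homotopy equivalence but a genuine tubular embedding whose associated Thom class is $p_2^*\tau_{M_1}$; this is the content the appendix is set up to handle, and once it is granted, (1) and (3) follow essentially formally.
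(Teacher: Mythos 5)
Your proposal is correct and follows the paper's own route: the paper deduces this proposition by directly applying Proposition~\ref{prop:comptub} from the appendix, whose proof constructs $\hat\nu_1$ by parallel transport of the $N(f,e)$--component along the geodesic $\exp_m(tV)$, verifies the tubular-embedding property via an inverse-function-theorem lemma at the zero section, and obtains the Thom class and the commuting triangle from $\tau_{3}=p_2^*\tau_{M_1}\cup(1\x\tau_e)$ together with naturality and cup--cap compatibility --- exactly the steps you outline.
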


\medskip

The embedding $(\id,f,e):M_1\to M_1\x M_2\x D^k$ is also equal to  the composition of embeddings
\begin{equation}\label{copembeddings2}
\xymatrix{ \ar@{^(->}[rr]^{(\id,f)} M_1 \ar@{_(->}[drr]_{(\id,f,e)}  && M_1 \x M_2\ar@{^(->}[d]^{(\id, e\circ \pi)}  \\
&& M_1\x M_2 \x D^k. 
}
\end{equation}
for $(\id,e\circ \pi):M_1\x M_2\to M_1\x M_2\x D^k$ taking $(m,n)$ to $(m,n,e(m))$. 
This gives another associated  diagram
of tubular embeddings, namely 
\begin{equation*}
\xymatrix{f^*TM_2\ar[d]\ar@{^(->}[drr]^{\bar\nu_1} &&  \bar
  N_2=M_1\x M_2\x \RR^k \ar[d]_{p_{\bar N_2}}\ar@{^(->}[drr]^{\ \bar\nu_2} && \\
M_1 \ar@{^(->}[rr]^-{(\id,f)}&& M_1\x M_2 \ar@{^(->}[rr]^-{(\id,e\circ \pi)}&& M_1\x M_2\x D^k,}
\end{equation*}
where we choose the explicit tubular embeddings 
$$\bar\nu_1(m,V)=(m,\exp_{f(m)}V) \ \ \ \ \textrm{and}\ \ \ \ \bar\nu_2(m,n,x)=(m,n,e(m)+\ell(x))$$ 
with $\ell$ a scaling map so that $\bar \nu_2$ is well-defined, recalling also that $TM_2$ denotes the $\eps_2$--tangent bundle of $M_2$ so that $\bar \nu_1$ is well-defined.
Note that $\bar \nu_1$ has image
$$U_{1,2}:=\{(m,n)\in M_1\x M_2\ |\ |f(m)-n|<\eps_2\}=(f\x1)^{-1}(U_2)$$
for $U_2\subset M_2\x M_2$ the $\eps_2$--neighborhood of the diagonal as above.

\begin{lem}\label{lem:pull2} The map $f\x 1$ induces a map of pairs $f\x 1:(M_1\x M_2,M_{1}^c)\to (M_2\x M_2,(\De M_2)^c)$ and, if $f$ is a homotopy equivalence, 
  the class  
$(f\x 1)^*\tau_{M_2}\in C^n(M_1\x M_2,M_{1}^c)$ is a Thom class for the tubular neighborhood $(f^*TM_2,\bar \nu_1)$. (Here $M_1^c$ denotes $((\id,f)M_1)^c\in M_1\x M_2$.) 
\end{lem}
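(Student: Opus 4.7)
The first assertion is immediate from the defining equation: since $U_{1,2}=(f\x1)^{-1}(U_2)$, one has $(f\x1)(U_{1,2}^c)\subset U_2^c$, so $f\x1$ restricts to a map of pairs $(M_1\x M_2,U_{1,2}^c)\to (M_2\x M_2,U_2^c)$.

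For the Thom-class assertion, the strategy is to fit $\bar\nu_1$ and $\nu_{M_2}$ into a commutative square whose left vertical map is a fiberwise linear isomorphism, and then invoke the naturality of Thom classes under such maps. To that end, introduce the tautological bundle map $\hat f\colon f^*TM_2\to TM_2$ covering $f$, sending $(m,V)\mapsto (f(m),V)$; this is by construction a fiberwise linear isomorphism. From the explicit formulas $\bar\nu_1(m,V)=(m,\exp_{f(m)}V)$ and $\nu_{M_2}(m',V)=(m',\exp_{m'}V)$, a direct computation gives the identity $(f\x1)\circ\bar\nu_1=\nu_{M_2}\circ\hat f$, so that
$$\xymatrix{f^*TM_2\ar[r]^-{\bar\nu_1}\ar[d]_-{\hat f}& M_1\x M_2\ar[d]^-{f\x 1}\\ TM_2\ar[r]^-{\nu_{M_2}}& M_2\x M_2}$$
commutes. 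After removing the zero sections on the left and passing to the complements $U_{1,2}^c, U_2^c$ on the right, this becomes a commutative square of pairs.

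The lemma then follows by naturality. Orient $f^*TM_2$ by pulling back the orientation of $TM_2$ through $\hat f$; then the standard fact that the pullback of a Thom class along a fiberwise orientation-preserving bundle isomorphism is the Thom class of the pulled-back bundle gives $\hat f^{\,*}\tau_{TM_2}=\tau_{f^*TM_2}$. Chasing $\tau_{M_2}$ around the square in two ways yields
$$\bar\nu_1^{\,*}\bigl((f\x1)^*\tau_{M_2}\bigr)=\hat f^{\,*}\bigl(\nu_{M_2}^{\,*}\tau_{M_2}\bigr)=\tau_{f^*TM_2},$$
so $(f\x1)^*\tau_{M_2}$ is indeed a Thom class for $(f^*TM_2,\bar\nu_1)$ as claimed. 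The degree-one assumption on $f$ enters precisely here: it ensures that the pullback orientation on $f^*TM_2$ coming from $\hat f$ is the one implicitly used in the rest of the argument; a degree $d$ map would produce $d$ times the intended Thom class, which would propagate into the scaling factor alluded to in Proposition~\ref{prop:invint}. The only real obstacle in this proof is the orientation bookkeeping on the pullback bundle; once that is fixed, the lemma is formal naturality combined with the exponential-map identity above.
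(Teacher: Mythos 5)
Your proof is correct in its core argument but takes a genuinely different route from the paper's. The paper characterizes the Thom class of the tubular neighborhood $(f^*TM_2,\bar\nu_1)$ by the single equation $\tau\cap[M_1\times M_2]=[M_1]$, and verifies this for $\tau=(f\times1)^*\tau_{M_2}$ in one line by naturality of the cap product, using $\deg(f)=1$ to identify $f_*[M_1]$ with $[M_2]$. You instead build the commutative square $(f\times1)\circ\bar\nu_1=\nu_{M_2}\circ\hat f$ and appeal to naturality of the bundle-theoretic Thom class under the fiberwise isomorphism $\hat f$, after passing through an excision isomorphism to get from $(M_1\times M_2,U_{1,2}^c)$ to the bundle pair (note that the complement of the zero section in $f^*TM_2$ maps to $U_{1,2}\setminus M_1$, not to $U_{1,2}^c$, so an excision step is genuinely required there). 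This is more geometric and in fact more general: it proves the lemma for \emph{any} map $f$, since the fiberwise pullback orientation on $f^*TM_2$ always coincides with the canonical normal-bundle orientation of the embedding $(\id,f)\colon M_1\hookrightarrow M_1\times M_2$ --- the relevant change of basis on $T_mM_1\oplus T_{f(m)}M_2$ is a shear with determinant one. Your closing remark about where the degree-one hypothesis enters is therefore mistaken: the degree is a global quantity and cannot affect a fiberwise orientation, and ``$d$ times a Thom class'' is not a Thom class when $|d|\neq1$, as it fails to restrict to a fiber generator. The hypothesis $\deg(f)=1$ is what the paper's cap-product computation uses, via $f_*[M_1]=[M_2]$; in your square it plays no role.
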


\begin{proof}
  The Thom class $\tau\in C^n(M_1\x M_2,M_{1}^c)$ is characterized by the property $\tau\cap [M_1\x M_2]=[(1\x f)M_1]$. We need to check that the class $\tau=(f\x 1)^*\tau_{M_2}$ satisfies this equation. 
  By the naturality of the cap product,
\begin{align*}
  (f\x 1)_*((f\x 1)^*\tau_{M_2}\cap [M_1\x M_2])&=\tau_{M_2}\cap (f\x 1)_*[M_1\x M_2]\\
  &=\deg(f) \tau_{M_2}\cap [M_2\x M_2]=\deg(f)[\De M_2].
\end{align*}
On the other hand, $(f\x 1)_*[(1\x f)M_1]=(f\x f)[\De M_1]=\deg(f)[\De M_2]$.  The result then follows from the fact that $(f\x 1)_*$ is invertible.  
\end{proof}

For the tubular neighborhood $(M_1\x M_2\x \RR^k,\bar \nu_2)$, the inclusion $M_1\x M_2\x \del D^k\inc (M_1\x M_2)^c\subset M_1\x M_2\x D^k$, where the second inclusion takes $(m,n)$ to $(m,n,e(m))$,   
induces a quasi-isomorphism 
$$\bar i^*: C^*(M_1\x M_2\x D^k,M_1\x M_2\x \del D^k)\xrightarrow{\simeq} C^*(M_1\x M_2\x D^k,(M_1\x M_2)^c)$$
Let $$\etab:=\bar i^*(1\x\eta)$$
for $\eta\in C^k(D^k,\del D^k)$ a chosen representative of the dual of the fundamental class.
Then $\etab$ is a Thom class for $\bar N_2$ as $\bar i_*(\etab\cap [M_1\x M_2\x D^k])=(1\x \eta)\cap \bar i_*[M_1\x M_2\x D^k]=[M_1\x M_2\x \{0\}]$ is homologous to the image of the zero-section of $\bar N_2$ under $\bar i_*$. 
Indeed, the embedding $(\id,e\circ \pi)$ is isotopic to the embedding $(\id,0)$, replacing $e$ by the constant map at $0\in D^k$, through an isotopy only affecting the disc.

\medskip

Applying  Proposition~\ref{prop:comptub} and Lemma~\ref{lem:pull2} to this new factorisation of the embedding $(\id,f,e)$,  we get

\begin{prop}\label{prop:tub12bis} Consider the sequence of embeddings $M_1\sta{(\id,f)}\inc M_1\x M_2\sta{(\id,e\circ \pi)}\inc M_1\x M_2\x D^k$ and let $\bar\nu_1,\bar\nu_2$ and $\bar\tau_1=(f\x 1)^*\tau_{M_2},\bar\tau_2=\etab$ be the embeddings and associated Thom classes chosen above. Assume that $f$ is a homotopy equivalence. Then the following hold: 
\begin{enumerate} 
\item The bundle $\bar N_{3}:=f^*TM_{2}\oplus \RR^k \rar M_1$ is isomorphic to the normal bundle of the composed embedding $(id,f,e):M_1\rar M_1\x M_2\x D^k$. 
\item There is an isomorphism $\tilde\nu_1:\bar N_3=f^*TM_{2}\oplus \RR^k\xrightarrow{\cong} 
 \bar N_2|_{U_{1,2}}= M_1\x M_2\x \RR^k|_{U_{1,2}}$ as bundles over $f^*TM_2\cong U_{1,2}$, with the property that 
$$\bar\nu_{3}:=\bar\nu_2\circ\tilde\nu_1\colon \bar N_3\rar M_1\x M_2\x D^k$$ 
is a tubular embedding for $(\id,f,e)$. 
Explicitly,  $$\bar\nu _{3}(m,V,W)=\bar\nu_2(\bar\nu _{1}(m,V),W)=(m,\exp_{f(m)}V,e(m)+W)$$
  for $W\in \RR^k_{(m,f(m))}\equiv \RR^k_{(m,\exp_{f(m)}V)}$.
\item  The class $$\bar\tau_{3}:=p_{\bar N_2}^*(f\x 1)^*\tau_{M_2}\cup \etab\in C^{n+k}(M_1\x M_2\x D^k, M_{1}^c)$$ is a Thom class for the tubular embedding $\bar\nu_{3}$ and the diagram 
$$\xymatrix{H_{*+k}(M_1\x M_2\x D^k)\ar[rr]^-{\llbracket \etab\,\cap\rrbracket} \ar[drr]_{\llbracket\bar\tau_{3}\,\cap\rrbracket} && H_{*}(M_1\x M_2)\ar[d]^{\llbracket(f\x 1)^*\tau_{M_2}\cap\rrbracket}  \\
& & H_{*-n}(M_1) }$$
commutes,  where the notation $\llbracket \tau\cap\rrbracket$ is as in Proposition~\ref{prop:tub12}.   
\end{enumerate}
\end{prop}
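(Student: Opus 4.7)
My plan is to apply Proposition~\ref{prop:comptub} from the appendix directly to the factorization~(\ref{copembeddings2}), in complete analogy with the deduction of Proposition~\ref{prop:tub12}. Most of the required setup has already been arranged before the statement: the maps $\bar\nu_1$ and $\bar\nu_2$ have explicit formulas making them visibly tubular embeddings onto $U_{1,2}$ and onto $M_1\x M_2\x \Int(D^k)$ respectively (the latter after identifying $\RR^k$ with the open $\eps$-ball, which is legitimate provided $\eps$ is small enough). The lemma just proved establishes that $\bar\tau_1=(f\x 1)^*\tau_{M_2}$ is a Thom class for $\bar\nu_1$, and this is the one place where the hypothesis $\deg(f)=1$ is used. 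The class $\bar\tau_2=1\x\eta$ is identified as a Thom class for $\bar\nu_2$ by means of the canonical inclusion $j$ together with the isotopy $(\id,e\circ\pi)\simeq(\id,0\circ\pi)$ displayed just above the proposition.

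Given these verifications, parts (1) and (2) are immediate outputs of Proposition~\ref{prop:comptub}: the normal bundle of $(\id,f,e)$ splits as the direct sum of the normal bundle $f^*TM_2$ of $(\id,f)$ with the pullback along $(\id,f)$ of the trivial normal bundle $\RR^k$ of $(\id,e\circ\pi)$, yielding $\bar N_3=f^*TM_2\oplus\RR^k$; and the appendix result then supplies the bundle isomorphism $\tilde\nu_1:\bar N_3\xrightarrow{\cong} \bar p_2^{-1}(U_{1,2})$ over $f^*TM_2\cong U_{1,2}$, such that $\bar\nu_3:=\bar\nu_2\circ\tilde\nu_1$ is a tubular embedding for the composition.

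For part~(3), Proposition~\ref{prop:comptub} further asserts that the cup product of the pulled-back Thom classes of the two factors is a Thom class for the composed tubular embedding, and that the associated triangle of cap-product chain maps commutes. Applied to $\bar\tau_1$ and $\bar\tau_2$, this yields $\bar\tau_3=\bar p_2^*(f\x 1)^*\tau_{M_2}\cup(1\x\eta)$ as Thom class for $\bar\nu_3$, and the commutativity of the displayed triangular diagram. The only bookkeeping point requiring care is the identification of $1\x\eta$ with a Thom class of $\bar\nu_2$ under the correct map of pairs; the canonical inclusion $j$ makes this identification explicit, so there is no real obstacle beyond quoting the appendix. In short, once the two preparatory identifications of $\bar\tau_1$ and $\bar\tau_2$ as Thom classes are in place, the statement is a direct instance of Proposition~\ref{prop:comptub}, with no calculation specific to this factorization beyond what was already done for Proposition~\ref{prop:tub12}.
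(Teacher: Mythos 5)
Your proposal is correct and is exactly the argument the paper intends: the proposition is stated as a direct consequence of Proposition~\ref{prop:comptub} applied to the factorization~(\ref{copembeddings2}), with the preceding lemma supplying the Thom class identification for $\bar\nu_1$ (the one place $\deg(f)=1$ enters) and the isotopy $(\id,e\circ\pi)\simeq(\id,0\circ\pi)$ supplying it for $\bar\nu_2$. No difference in approach.
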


We have given two explicit tubular neighborhoods $(N_{3},\nu_{3})$ and $(\bar N_{3},\bar \nu_{3})$ of the embedding $(\id,f,e)$, coming from two different ways of considering it as a composition of embeddings. 
The bundles $N_{3}$ and $\bar N_{3}$ are isomorphic, as they are both isomorphic to the normal bundle of the embedding. More than that, 
the uniqueness theorem for tubular neighborhoods implies that the embeddings are necessarily isotopic, and the following result holds:

\begin{prop}\label{prop:tub3}
Let $(N_{3},\nu_{3}),\tau_{3}$ and $(\bar N_{3},\bar\nu_{3}),\bar\tau_{3}$ be the tubular neighborhoods and Thom classes for the embedding $(\id,f,e)$ obtained in Propositions~\ref{prop:tub12} and \ref{prop:tub12bis}.  There exists a bundle isomorphism $\phi:N_{3}\to \bar N_{3}$ 
and a diffeomorphism $h=(\id\x\, \tilde h): M_{1}\x M_2\x D^k\rar M_{1}\x M_2\x D^k$, fibered over the projection to $M_1$, such that 
\begin{enumerate}
\item $h$ restricts to the identity on $M_1\stackrel{(\id,f,e)}\inc M_1\x M_2\x D^k$ and outside $U_{1,2}\x \mathring{D}^k$; 
 \item $h$ is isotopic over $M_1$ to  the identity relative to  the same subspaces; 
\item $h\circ \nu_{3}=\bar\nu_{3}\circ\phi : \delta N_{3}\rar M_1\x M_2\x D^k$, for $\delta N_3$ (resp.~$\delta\bar N_3=\phi(\delta N_3)$)  an appropriately chosen isomorphic subbundle. 
  \item $[\bar\tau_{3}]=[h^*\bar\tau_{3}]=[\tau_{3}]\in H^{n+k}(M_1\x M_2\x D^k,M_{1}^c)$. 
\end{enumerate}
\end{prop}

Note that (1) implies that $h(M_1^c)=M_1^c$ and (3)  that  $h(\nu_3\delta N_3)=\bar\nu_3 \delta\bar N_3$.

\begin{proof}
  Uniqueness of tubular neighborhoods (see e.g. \cite[Chap 4, Thm 5.3]{Hir94}) 
  gives that the tubular neighborhoods $(N_3,\nu_{3})$ and $(\bar N_3,\bar \nu_{3})$ are isotopic, and this is what we will use to produce the diffeomorphism $h$. To obtain detailed properties of the resulting $h$ in our particular case, we follow the actual construction of the isotopy in the proof of this theorem in \cite{Hir94}. The  first step in the proof is to replace $N_3$ by an isomorphic subbundle $\delta N_3$ such that $\nu_3(\delta N_3)\subset \bar \nu_3(\bar N_3)$. Then one considers the map $g=\bar \nu_3^{-1}\circ \nu_3:\delta N_3\to \bar N_3$. In our case, because $\nu_3$ and $\bar \nu_3$ both restrict to the identity on $M_1$ in the first factor, having the form $\nu_3(m,V,W)=(m,\nu_e(\exp_mV,W'))$ and $\bar \nu_3(m,X,Y)=(m,\exp_{f(m)}X,e(m)+Y)$, we have that $g$ has the form $g(m,V,W)=(m,X,Y)$ sends fibers to fibers. Now the map $g$ is fiberwise homotopic to its fiber derivative $T_{M_1}g:\delta N_3\to \bar N_3$ through the homotopy
  $$H(m,V,W,t)=\left\{ \begin{array}{ll} t^{-1} g(m,tV,tW) & t\in (0,1]\\
                         T_{M_1}g(m,V,W) & t=0
                       \end{array}\right.$$
We set $\phi=T_{M_1}g$ and obtained the desired isotopy $F: \delta N_3 \x I \to M_1\x M_2\x D^k$ from $\nu_3=\bar\nu_3\circ g$ to $\bar\nu_3\circ \phi$ by setting $$F(m,V,W,1-t)=\bar \nu_3(H(m,V,W,t)).$$

By the isotopy extension theorem  (see eg.~\cite[Chap 8, Thm 1.3]{Hir94}),  
there exists an isotopy of diffeomorphisms 
$${\bf F}: (U_{1,2}\x D^k) \x I \ \rar \ U_{1,2}\x D^k$$ 
such that ${\bf F}(-,0)$ is the identity on  $U_{1,2}\x D^k$ and such that  ${\bf F}$ restricts to $F$ on $\nu_{3}(\delta N_3)\x I$. As ${\bf F}$ is only non-trivial in a neighborhood of the image of $F$,
can also be chosen to be fiberwise, and $F$ has support inside $\bar N_3\cong TM_2\op \RR^k$, 
${\bf F}$ can be constructed so that it has support within the slices $\exp_{(m,f(m),e(m))}TM_2\op \RR^k\subset \{m\}\x M_2\x D^k$. 
It also fixes $(\id,f,e)M_1$ because $F$ fixes it. Hence by construction, $h:={\bf F}(-,1)$ is a diffeomorphism of $U_{1,2}\x D^k$ of the form $h=\id \x\, \tilde h$, that extends to $M_1\x M_2\x D^k$ via the identity, and satisfies (1)--(3) in the statement.

Because $h$ is a diffeomorphism compatible with the tubular embeddings, it pulls back the Thom class to a Thom class. Unicity of the Thom class in cohomology gives the last part of the statement.
\end{proof}

\begin{convention}
 Recall that $N_3\cong N_2|_{U_1}$ and $\bar N_3\cong \bar N_2|_{U_{1,2}}$. 
We replace $N_3$ by the smaller subbundle $\delta N_3$, and $\bar N_3$ by $\delta\bar N_3=\phi(\delta N_3)$, as defined in the above proof, scaling $U_1$, $N_2$, $U_{1,2}$, $U_2$ and $\bar N_2$ accordingly. 
  \end{convention}

Consider now the diagram 
\begin{equation}\label{equ:intdiag}
  \xymatrix{H_*(M_1\x M_1) \ar[dd]_{1\x f_*} \ar[rrrr]^-{\llbracket\tau_{M_1}\cap\rrbracket} 
    &&&& H_{*-n}(M_1) \ar@{=}[dd]\\
&  \ar@{}[l]|{\mbox{(1)}}
&  H_{*+k}(M_1\x M_2\x D^k, M_1\x M_2\x \del D^k) 
\ar[ull]_-{\llbracket \taue\,\cap\rrbracket}^-\cong 
\ar[urr]^-{\llbracket\tau_{3}\,\cap\rrbracket} \ar[dll]_-{\llbracket \etab\,\cap\rrbracket=[(1\x \eta)\cap] \ \ \ \ } \ar[drr]^-{\llbracket \bar\tau_{3}\,\cap\rrbracket}&
\ar@{}[r]|{\mbox{(2)}} &\\
H_*(M_1\x M_2) \ar[d]_{f_*\x 1}\ar[rrrr]^-{\llbracket(f\x 1)^*\tau_{M_2}\cap\rrbracket} 
& & && H_{*-n}(M_1) \ar[d]^{f_*}\\
 H_*(M_2\x M_2)\ar[rrrr]^-{\llbracket\tau_{M_2}\cap\rrbracket}   && && H_{*-n}(M_2) 
}\end{equation}

Invariance of the intersection product will follow from the commutativity of this diagram once we have also shown that the map $\llbracket\taue\,\cap\rrbracket$ is an isomorphism.
This last fact
will follow from the commutativity of (1) in the diagram, and the fact that the two other maps in that triangle are more easily seen to be homology isomorphisms, one of them because $f$ is a homotopy equivalence, and the other one as a  consequence of the homology suspension theorem. 
Note that the map $\llbracket\tau_e\,\cap\rrbracket$ in the diagram is not directly a case of a Thom isomorphism because the source of the map is not correct, so that we cannot deduce that it is an isomorphism from the Thom isomorphism theorem.

We analyse the diagram step by step. The top triangle commutes by Proposition~\ref{prop:tub12}(3), and the middle triangle by  Proposition~\ref{prop:tub12bis}(3). 
The bottom square commutes by the naturality of the cap product. 
So what we need  to show is that the triangles labeled (1) and (2) commute. 
To prove commutativity of  (1), we use the following lemma:

\begin{lem}\label{lem:degree} 
Let $i:(M_2\x D^k,M_2\x \del D^k) \rar (M_2\x D^k,((f,e)M_1)^c) $
be the inclusion of pairs. If $f$ is a degree $d$ map, then $$i^*[\tau_e]=d[1\x \eta]\in H^k (M_2\x D^k,M_2\x \del D^k)$$
for $\tau_e$ the Thom class of $(N(f,e),\nu_e)$ and $\eta\in C^k(D^k,\del D^k)$ a representative of the dual of $[D^n]$ as above.  
\end{lem}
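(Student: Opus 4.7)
The plan is to reduce the equality in $H^k(M_2 \x D^k, M_2 \x \del D^k)$ to a single integer computation. Since $D^k$ is contractible, this group is isomorphic to $H^k(D^k, \del D^k) \cong \Z$ with generator $[1\x \eta]$, so it suffices to identify the multiple. Poincar\'e--Lefschetz duality for the oriented manifold-with-boundary $M_2 \x D^k$ says that capping with the fundamental class $[M_2 \x D^k, M_2 \x \del D^k] = [M_2]\x [D^k]$ gives an isomorphism onto $H_n(M_2 \x D^k)$, which is further isomorphic to $H_n(M_2)$ via the projection $\mathrm{pr}\colon M_2 \x D^k \to M_2$. I would therefore compute both sides after capping with $[M_2]\x [D^k]$ and pushing forward along $\mathrm{pr}$.

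For $[1\x \eta]$ this is immediate from the cross-product formula:
$$(1\x \eta)\cap ([M_2]\x [D^k]) \ =\  [M_2]\x (\eta\cap [D^k])\ =\ [M_2]\x\{*\},$$
which projects to $[M_2]\in H_n(M_2)$. For $i^*[\tau_e]$, naturality of the cap product along the inclusion of pairs $i$ gives
$$i^*[\tau_e]\cap [M_2\x D^k, M_2\x \del D^k]\ =\ [\tau_e]\cap i_*[M_2\x D^k, M_2\x \del D^k],$$
where $i_*$ sends the fundamental class to the corresponding relative fundamental class (one may arrange, by shrinking $\eps$, that $\nu_e N(f,e)$ is disjoint from $M_2 \x \del D^k$). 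The defining property of the Thom class then identifies this right-hand side with $(f,e)_*[M_1] \in H_n(M_2 \x D^k)$, which projects to $f_*[M_1] = \deg(f)\,[M_2]$. Comparing the two computations yields $i^*[\tau_e] = \deg(f)\,[1\x \eta]$, as desired.

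The main point that will require justification is the invocation of the Thom-class property: namely, that $[\tau_e]\cap [M_2\x D^k, (\nu_e N(f,e))^c]$ equals $(f,e)_*[M_1]$ in $H_n(M_2\x D^k)$. This reduces, via excision, to the standard fact that for the closed disk-bundle pair $(DN(f,e),\del DN(f,e))$ over $M_1$, capping its fundamental class with the Thom class yields the fundamental class of the zero section. Granting this, the rest of the argument is formal bookkeeping with cross-products and naturality.
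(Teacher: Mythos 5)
Your proposal is correct and follows essentially the same route as the paper: both arguments cap the two relative cohomology classes with the fundamental class of $(M_2\x D^k,M_2\x \del D^k)$, use naturality of the cap product along the inclusion of pairs $i$ together with the excision identification $\nu_{e*}[N(f,e)]=i_*[M_2\x D^k]$ (your ``disk-bundle'' reduction), and then invoke the defining property of the Thom class plus $f_*[M_1]=\deg(f)[M_2]$, with Lefschetz duality (or generator-to-generator injectivity) closing the argument. No gaps beyond the Thom-class step you already flag, which is exactly the step the paper handles with its commuting diagram.
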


\begin{proof}
Identifying $M_1$ with its image under $(f,e)$ in $M_2\x D^k$, consider the diagram 
$$\xymatrix{H^*(N(f,e),M_1^c) \ar[d]_{\cap\, [N(f,e)]}  & H^*(M_2\x D^k,M_1^c) \ar[r]^-{i^*} \ar[l]_-{\nu_e^*}^-\cong\ar[d]_{\cap \,[M_2\x D^k]} & H^*(M_2\x D^k,M_2\x \del D^{k}) \ar[d]^{\cap\, [M_2\x D^k]}\\
H_{n+k-*}(N(f,e)) \ar[r]^-{\nu_{e*}} & H_{n+k-*}(M_2\x D^k) & H_{n+k-*}(M_2\x D^k)  \ar[l]_-{i_*=id}
}$$
where the vertical maps cap with the fundamental class in each case and $\nu_e^*$ is an isomorphism by excision.  
The two squares commute by naturality of the cap product, given that $[M_2\x D^k]=i_*[M_2\x D^k]=\nu_{e*}[N(f,e)]\in H^*(M_2\x D^k,M_1^c)$. Hence in homology we have a commuting diagram 
$$\xymatrix{[\tau_e]\in & H^k(M_2\x D^k,M_1^c) \ar[r]^-{i^*} \ar[d]_{(\cap \,[N(e,f)])\circ \nu_e^*} & H^k(M_2\x D^k,M_2\x \del D^{k}) \ar[d]^{\cap [M_2\x D^k]} & \ni  \deg(f)[1\x \eta]\\
& H_n(N(f,e)) \ar[r]^-{\nu_{e*}} & H_n(M_2\x D^k) \ar[d]^\cong & \\
[M_1]\in & H_n(M_1) \ar[u]_\cong\ar[r]^-f & H_n(M_2) & \ni \deg(f)[M_2]
}$$
where the first square commutes by the commutativity of the previous diagram, and the bottom square commutes as it comes from a commuting square in the category of spaces. Now by definition of the Thom classes, $[\tau_e]$ is taken to $[M_1]$ along the left vertical maps, and $[1\x\eta]$ to $[M_2]$ along the right vertical maps as $[1\x\eta]$ is a Thom class for the trivial $D^k$--bundle on $M_2$. On the other hand $f$ takes $[M_1]$ to $d[M_2]$ along the bottom horizontal map, from which the result follows.  
\end{proof}

\begin{prop}\label{prop(1)}
  Assume that $f$ is a degree 1 map. The subdiagram labeled (1) in Diagram (\ref{equ:intdiag}) commutes.  
  Moreover, the map $\llbracket\etab\,\cap\rrbracket$ in the diagram is an isomorphism. 
\end{prop}

\begin{proof} Recall that $N_2=M_1\x N(f,e)$. 
Spelling out the maps, the diagram becomes the following: 
$$\xymatrix{H_*(M_1\x M_1)  
  \ar[dd]_{1\x f_*} & H_*(N_2) \ar@<3ex>@{}[dr]|(.8){(a)} \ar[l]_{\cong} \ar[d]_{\nu_{2*}} & H_{*+k}(M_1\x M_2\x D^k,(M_1\x M_1)^c) \ar[l]_-{[\taue\,\cap]}^-\cong \ar@/_2ex/[ld]^(0.5){\taue\,\cap} \\
& H_*(M_1\x M_2\x D^k)\ar[dl]_\cong & H_{*+k}(M_1\x M_2\x D^k,M_1\x M_2\x \del D^k) \ar@<-7ex>[u]_{1\x i_*}\ar@<7ex>[d]^{\bar i_*}_\cong 
\ar[l]_-{(1\x \eta)\,\cap}^-\cong \\
H_*(M_1\x M_2)& H_*(\bar N_2)  \ar@<-3ex>@{}[ur]|(.8){(b)}  \ar[l]^{\cong} \ar[u]^{\bar\nu_{2*}}  & H_{*+k}(M_1\x M_2\x D^k,(M_1\x M_2)^c) \ar[l]_-{[\etab\,\cap]}^-\cong \ar@/^2ex/[lu]_(0.5){\etab\,\cap} 
}$$

The subdiagram labeled $(a)$ commutes by naturality of the cap product since $i^*[\taue]=[1\x \eta]$ by Lemma~\ref{lem:degree} as $f$ has degree 1.
Likewise subdiagram $(b)$ commutes as $\etab=\bar i^*(1\x \eta)$ by definition. The rest commutes on the chain or space level by definition of the maps.

The isomorphisms marked in the diagram are given by the Thom isomorphism, excision, or homotopy equivalences coming either from contracting a disc or a bundle.
From this we see that the map  $\llbracket\etab\,\cap\rrbracket$  is a composition of  isomorphisms, and hence is itself an isomorphism.
\end{proof}

Note that the top part of the diagram in the proof shows that the left-hand map $1\x f_*$ is invertible if and only if the right-hand map $1\x i_*$ is, and these two maps are in this case, up the a twisted suspension, inverses of each other.

\begin{prop}\label{prop:invint}
Let $f:M_1\to M_2$ be a homotopy  equivalence between closed manifolds.  Then $f$ induces a ring map $H_*(M_1)\to H_*(M_2)$, with ring structure given by the intersection product, up to the sign $\deg(f)$, that is the diagram 
$$\xymatrix{H_*(M_1\x M_1) \ar[d]_{f_*\x f_*} \ar[rr]^-{\llbracket\tau_{M_1}\cap\rrbracket} && H_{*-n}(M_1) \ar[d]^{f_*}\\
H_*(M_2\x M_2)  \ar[rr]^-{\llbracket\tau_{M_2}\cap\rrbracket} && H_{*-n}(M_2)}$$
commutes  up to the sign $\deg(f)=\pm 1$. 
\end{prop}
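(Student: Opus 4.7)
The plan is to establish the commutativity of the outer rectangle of diagram~(\ref{equ:intdiag}), which, after writing $f_*\x f_*=(f_*\x 1)\circ(1\x f_*)$, is precisely the statement of the proposition. Since a homology equivalence between closed oriented $n$--manifolds has degree $\pm 1$, I would first reverse the orientation of $M_1$ if necessary to reduce to the case $\deg(f)=1$; the factor $\deg(f)$ in the conclusion merely records that reversing the orientation negates $f_*$.

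Next, I would observe that all but two of the interior pieces of diagram~(\ref{equ:intdiag}) have already been shown to commute: the top triangle by Proposition~\ref{prop:tub12}, the middle triangle by Proposition~\ref{prop:tub12bis}, the bottom square by naturality of the cap product, and triangle~(1) by Proposition~\ref{prop(1)}. What remains is to verify that triangle~(2) commutes and that the two maps $[1\x \tau_e\,\cap]$ and $[1\x \eta\,\cap]$ emanating from the middle term are isomorphisms. For triangle~(2), I would invoke Proposition~\ref{prop:tub3}: the diffeomorphism $h$ produced there is isotopic to the identity (so induces the identity on homology), it pointwise fixes the image of $(\id,f,e)$, and it satisfies $[h^*\tau_3]=[\bar\tau_3]$. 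Combining these three properties with naturality of the cap product under $h$ forces $[\bar\tau_3\,\cap]=[\tau_3\,\cap]$ on $H_*$.

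For the isomorphism claims, the map $[1\x \eta\,\cap]$ is a Thom/suspension isomorphism for the trivial disc bundle $M_1\x M_2\x D^k\to M_1\x M_2$. Since $f_*$ is an isomorphism by hypothesis, the vertical arrow $1\x f_*$ in triangle~(1) is invertible; together with the commutativity of triangle~(1) this forces $[1\x \tau_e\,\cap]$ to be an isomorphism as well. With every interior piece in place, the conclusion is then a short diagram chase: given $x\in H_*(M_1\x M_1)$, lift it to $\tilde x$ in the middle term via $[1\x \tau_e\,\cap]^{-1}$; follow $\tilde x$ upwards and then to the right using the top triangle to reach $f_*[\tau_{M_1}\,\cap](x)$, and follow it downwards using triangle~(1), the middle triangle, triangle~(2) and the bottom square to reach $[\tau_{M_2}\,\cap](f_*\x f_*)(x)$; the five commuting subdiagrams identify the two images in $H_{*-n}(M_2)$.

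I expect the main obstacle to be the verification of triangle~(2). The two routes $[\tau_3\,\cap]$ and $[\bar\tau_3\,\cap]$ use different tubular neighborhoods and different Thom classes for the \emph{same} embedding $(\id,f,e)$, and land in the homology of different open subsets of $M_1\x M_2\x D^k$ before projecting to $H_{*-n}(M_1)$; the equality is therefore not tautological, and one really needs the ambient diffeomorphism $h$ of Proposition~\ref{prop:tub3} together with the fact that it is isotopic to the identity relative to the diagonal copy of $M_1$ in order to rigidly identify the two routes on relative homology.
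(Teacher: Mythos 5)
Your proposal is correct and follows the paper's proof essentially verbatim: reduce to $\deg(f)=1$ by reorienting, establish triangle~(2) via the ambient diffeomorphism $h$ of Proposition~\ref{prop:tub3} (isotopic to the identity, fixing $M_1$, carrying $\tau_3$ to $\bar\tau_3$), obtain $[1\x \eta\,\cap]$ as an isomorphism from the homology suspension theorem, and then deduce invertibility of $[1\x \tau_e\,\cap]$ from commutativity of triangle~(1) together with the hypothesis that $f_*$ is a homology isomorphism. One small imprecision in the justification for the sign: reversing the orientation of one of the manifolds does not ``negate $f_*$'' (the pushforward in singular homology is oblivious to orientation); rather it negates the corresponding Thom class $\tau_{M_i}$, hence the intersection product on that manifold, and this is what absorbs the factor $\deg(f)$.
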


We repeat here that this is {\em  not} the optimal result for invariance of the intersection product, and absolutely not the optimal proof! See eg., \cite[VI, Prop 14.2]{Bredon} for the stronger statement for any degree $d$ map as a direct consequence of Poincar\'e duality. The proof given here has though the advantage of being liftable to the loop space, as we will see in the following sections.

\begin{proof}
Given that $f$ is a homotopy equivalence, it has degree $\pm 1$. If $\deg(f)=-1$, changing the orientation of $M_2$, so that the degree of $f$ becomes 1, changes the intersection product of $M_2$ by a sign $(-1)$. Hence it is enough to consider the case where $\deg(f)=1$.

We need to show that Diagram (\ref{equ:intdiag}) commutes, and that the two maps $\llbracket\taue\,\cap\rrbracket$ and $\llbracket\etab\,\cap\rrbracket$  are isomorphisms.

We have already seen that  the top and  middle triangles commute by  Propositions~\ref{prop:tub12}(3) and~\ref{prop:tub12bis}(3), and the 
bottom square by the naturality of the cap product. The commutativity of the triangle labeled (1) and the fact that  $\llbracket\etab\,\cap\rrbracket$ is an isomorphism are given by Proposition~\ref{prop(1)}.
Given that $f$ is assumed to be a homotopy equivalence, $1\x f_*$ is an isomorphism and the commutativity of (1) implies that also $\llbracket\taue\,\cap\rrbracket$ is an isomorphism.

For  the triangle labeled (2), commutativity follows from Proposition~\ref{prop:tub3}. Indeed, the map $h$ in the proposition gives a commutative diagram 
$${\small \xymatrix{H_{*+k}(M_1\x M_2\x D^k,M_1\x M_2\x \del D^k) \ar[r]\ar[dr] & H_{*+k}(M_1\x M_2\x D^k,M_{1}^c) \ar[r]^-{[\tau_{3}\,\cap]} \ar[d]^h & H_{*-n}(N_{3}) \ar[r]\ar[d]^h & H_{*-n}(M_{1})  \ar@{=}[d] \\ 
& H_{*+k}(M_1\x M_2\x D^k,M_{1}^c) \ar[r]^-{[\bar\tau_{3}\,\cap]}  & H_{*-n}(\bar N_{3}) \ar[r] & H_{*-n}(M_{1}) }}$$
because $h$ is isotopic to the identity relative to $M_1\x M_2\x \del D^k$ (for the first triangle), pulls back $\bar \tau_{3}$ to $\tau_{3}$ (for the middle square), and fixes $M_1$ (for the last square). 
Hence Diagram (\ref{equ:intdiag}) commutes, which finishes the proof. 
\end{proof}

\section{Lifted intersection products}\label{sec:intprod}

In this section,  following \cite[Sec 4.1]{NRW} we lift the
intersection product along maps $p:\E\to M\x M$, and give examples that will be relevant to us. We will then show a first basic invariance property of lifted intersection products. 

\smallskip

Recall the Thom class $\tau_M\in C^n(U_M,M^c)\simeq C^n(M\x M,M^c)$ from the previous section. Pulling back along a map  $p:\E\to M\x M$ defines a class
$p^*\tau_M\in C^n(\E|_{U_M},(\E|_M)^c)$ that can be used to define a lift of the intersection product of $M$ using essentially the same definition:

\begin{Def}
  Given a map $p:\E\to M\x M$, we define the  {\em short intersection product} as the composition 
\begin{equation}\label{eq:int2}
 int_{U_M}: C_*(\E)\rar C_*(\E,\E|_{M^c})  \xrightarrow{\rho}  C_*(\E|_{U_M},\E|_{M^c}) \xrightarrow{p^*\tau_M\cap} C_*(\E|_{U_M}). 
\end{equation}
 where $\rho$ is a choice of homotopy inverse for excision. 
When $p$ is a fibration,  we get moreover an associated {\em intersection product} $int_M$ defined as the composition
\begin{equation}\label{eq:int}
  int_M: C_*(\E) \xrightarrow{int_{U_M}} C_*(\E|_{U_M}) \xrightarrow{\sim}  C_{*-n}(\E|_M)
  \end{equation}
 where the second map is induced by the equivalence $U_M\arsim M$ using that $p|_{U_M}$ is a fibration.
\end{Def}

Let $\La M=\textrm{Maps}(S^1,M)$ denote the free loop space. Evaluation at any point $t\in S^1$ defines a fibration $\e_t:\La M\to M$. This will be our main source of examples of fibrations.

\begin{ex}\label{ex:CS+trivial} Though not originally introduced this way, 
 a widely studied example of such a lifted intersection product is the {\em Chas-Sullivan product} on $H_*(\La M)$. This product can be obtained by applying the above construction to the fibration $\e_0\x \e_0: \La M\x \La M\to M\x M$, followed by loop concatenation:
 $$C_*(\La M\x\La M)\xrightarrow{int_M} C_{*-n}(\La M\x_M\La M) \xrightarrow{concat} C_{*-n}(\La M).$$
Here we have  identified $(\La M\x \La M)|_M$ with the figure eight space $\La M\x_M\La M$. (See \cite[Sec 2]{HinWah0} for an account of how this definition relates to the earlier definitions of \cite{CS99,CohJon}.)

 Another example is the so-called {\em trivial coproduct}, obtained in a similar fashion using the fibration $\e_{0,\ha}: \La M\to M\x M$, evaluating a loop at both $t=0$ and $t=\ha$, followed by the cut map:
$$C_*(\La M)\xrightarrow{int_M} C_{*-n}(\La M\x_M\La M) \xrightarrow{cut} C_{*-n}(\La M\x \La M).$$
Here again $\La M|_M$ identifies with the figure eight space $\La
M\x_M\La M$. Tamanoi showed in \cite[Thm B]{Tam}  that this coproduct is essentially trivial in homology, hence its name. 
\end{ex}

We give now two ``exotic'' examples of such intersection products that will be used in Section~\ref{sec:inv1}.

\begin{ex}\label{ex3} Recall from Section~\ref{sec:int} the tubular neighborhoods $N_2$ (resp.~$N_3$) of $M_1\x M_1$ (resp.~$M_1$)  in $M_1\x M_2\x D^k$: 
  $$\xymatrix{& N_3\ar@{^(->}[r] \ar[d]& N_2=M_1\x N(f,e)\ar@{^(->}[r]^-{\nu_2}\ar[d]_{p_{N_2}}&  M_1\x M_2\x D^k \\ M_1\ar@{^(->}[r] & U_{1}\ar@{^(->}[r] & M_1\x M_1 \ar@{^(->}[ur] &
   }$$
  where $N_3\cong N_2|_{U_{1}}$, see Proposition~\ref{prop:tub12}. 
  Given a map $p:\E_2\to M_2\x M_2$, consider the pull-backs
  $$\xymatrix{\E_{N_3}\ar[d]_{p}\ar[r] & \E_{N_2}\ar[d]_{p}\ar[r] & \E_{1,2}\x D^k\ar[d]^{p}\ar[r] & \E_{1,2}\ar[d]^{p}\ar[r] & \E_2 \ar[d]^p\\
 N_3\ar@{^(->}[r]&N_2\ar@{^(->}[r]&   M_1\x M_2\x D^k \ar@{->>}[r] & M_1\x M_2\ar[r]^-{f\x \id} & M_2\x M_2  }$$
Now the above construction associates to the composition $$\E_{N_2}\xrightarrow{\  p\ } N_2\xrightarrow{p_{N_2}} M_1\x M_1$$ 
a short intersection product of the form
$$int_{U_{1}}: C_*(\E_{N_2})  \rar 
C_{*-n}(\E_{N_2}|_{U_{1}}) \cong C_{*-n}(\E_{N_3}).$$ 
\end{ex}

\begin{ex}\label{ex3bis}
Consider now the tubular neighborhoods $\bar N_2$ (resp.~$\bar N_3$)  of $M_1\x M_2$ (resp.~$M_1$) in $M_1\x M_2\x D^k$, 
$$\xymatrix{& \bar N_3\ar@{^(->}[r] \ar[d]& \bar N_2=M_1\x M_2\x \RR^k \ar@{^(->}[r]^-{\bar \nu_2}\ar[d]^{p_{\bar N_2}}&  M_1\x M_2\x D^k \\
M_1\ar[d]_f\ar@{^(->}[r] &   U_{1,2}\ar@{^(->}[r] \ar[d] & M_1\x M_2 \ar@{^(->}[ur] \ar[d]^{f\x \id} & \\
 M_2\ar@{^(->}[r] &    U_{2}\ar@{^(->}[r] & M_2\x M_2  &  }$$
where $\bar N_3\cong \bar N_2|_{U_{1,2}}\cong \bar N_2|_{U_{2}}$, see Proposition~\ref{prop:tub12bis}. Just as above, we have pull-backs
 $$\xymatrix{\E_{\bar N_3}\ar[d]_{p}\ar[r] & \E_{\bar N_2}\ar[d]_{p}\ar[r] & \E_{1,2}\x D^k\ar[d]^{p}\ar[r] & \E_{1,2}\ar[d]^{p}\ar[r] & \E_2 \ar[d]^p\\
 \bar N_3\ar@{^(->}[r]& \bar N_2\ar@{^(->}[r]&   M_1\x M_2\x D^k \ar@{->>}[r] & M_1\x M_2\ar[r]^-{f\x \id} & M_2\x M_2  }$$
  The composition $$\E_{\bar N_2}\rar  \bar N_2\xrightarrow{p_{\bar N_2}} M_1\x M_2\xrightarrow{f\x\id} M_2\x M_2$$
  is not necessarily a fibration even if $p$ was one, but we still have a short  intersection product as in the previous example: 
  $$int_{U_{2}}: C_*(\E_{\bar N_2})\rar C_{*-n}(\E_{\bar N_2}|_{U_{2}})\cong C_{*-n}(\E_{\bar N_3}). $$
 \end{ex}

\subsection{Relative intersection products} 

Given a map $\E\to M\x M$ as above, and in addition a map $\R\to \E$,  
                                             we get a relative short intersection product
\begin{equation}\label{eq:int2rel1}
int_{U_M}:  C_*(\E,\R)\rar C_*(\E,\E|_{M^c}\cup \R)  \xrightarrow{\ \rho\ }  C_*(\E|_{U_M},\E|_{M^c}\cup \,\R|_{U_M}) \xrightarrow{p^*\tau_M\cap} C_*(\E|_{U_M},\R|_{U_M}).  
\end{equation}
This can further be composed with a retraction map 
$$C_*(\E|_{U_M},\R|_{U_M}) \xrightarrow{r}  C_{*-n}(\E|_M,\R|_M)$$
for example when both $\E|_{U_M}\to U_M$ and the composition $\R|_{U_M}\to \E|_{U_M}\to U_M$ are fibrations, giving 
 a relative intersection product 
\begin{equation}\label{eq:int2rel}
int_M: C_*(\E,\R)\xrightarrow{int_{U_M}} C_*(\E|_{U_M},\R|_{U_M}) \xrightarrow{r}  C_{*-n}(\E|_M,\R|_M).
  \end{equation}

  \begin{ex}[The relative coproduct]\label{ex:relcoprod}
    Let $\E=\La M$ and consider again the fibration $\e_{0,\ha}:\La M\to M\x M$ of Example~\ref{ex:CS+trivial}. Let
    $$\R=\big\{\ga\in \La M\ |\ \ga(t)=\ga(0)\ \forall t\in [0,\ha] \textrm{ or } \forall t\in [\ha,1]\big\}$$
    be the subspace of $\La M$ of half-constant loops. The map $\e_{0,\ha}$ restricts to a map
    $\e_{0,\ha}:\R\to M\subset M\x M$ with value on the diagonal. Then $\R|_{U_M}=\R|_{M}$ and we get a relative intersection product
    $$int_M: C_*(\La M,\R)\rar C_{*-n}(\La M|_{M},\R).$$
    Now $\La M|_{M}\cong \La M\x_M\La M$ identifies with the figure eight space and this intersection product can further be composed with a cut map to define a coproduct:
    $$C_*(\La M,\R)\xrightarrow{int_M} C_{*-n}(\La M|_{M},\R)\xrightarrow{cut} C_{*-n}(\La M\x \La M,M\x \La M\cup \La M\x M).$$
As already mentioned in Example~\ref{ex:CS+trivial}, the non-relative version of this coproduct is almost completely trivial in homology. On the other hand, precomposing with a reparametrization map yields the ``good'' string coproduct (see eg.~\cite[Sec 2.5]{NRW}) and it follows from the computations of Goresky-Hingston \cite[Thm 15.3]{GorHin} for spheres and Naef \cite{Nae21} for lens spaces,  that this relative version of the coproduct is highly non-trivial. 
  \end{ex}

\subsection{Invariance of intersection products over a fixed manifold}

Suppose that we are given a map, or relative map over $M\x M$ (taking $\R_i$ empty in the non-relative case):  
\begin{align}\label{eq:maprel}
                                                      \xymatrix@R=1pc@C=1pc{
 \R\ar@/_4ex/[dddrrr]\ar[dr] \ar@{-}[rr]^-{F_0} & & \R' \ar[dr]\ar@{..>}[dddr] & \\
&  \E\ar[rr]^-(.4)F \ar[ddrr]^(.55){p} & & \E'\ar[dd]^(.4){p'} \\
&&&\\
  & & & M\x M 
}
                                             \end{align}
We give here an enhanced version of \cite[Prop 4.6]{NRW}, showing that such maps respects the (short) intersection products over $M$.

\begin{prop}\label{prop:easynat}
  Let $(F,F_0):(\E,\R)\to (\E',\R')$ be a map  of spaces over $M\x M$ as in Diagram \eqref{eq:maprel} (with $\R,\R'$ possibly empty). Then there is a chain homotopy 
$$int_{U_{M}}\circ (F,F_0)_* \simeq (F,F_0)_*\circ int_{U_{M}}: C_*(\E,\R)\rar C_{*-n}(\E'|_{U_{M}},\R'|_{U_{M}}).$$
 If the restrictions $p|_{U_{M}}:\E|_{U_{M}} \to U_M$ and   $p'|_{U_M}:\E'|_{U_M} \to U_M$  are fibrations (and also for $\R$, $\R'$ in the relative case), then the same holds for the intersection product $int_M$. 
  \end{prop}

  \begin{proof}
We consider first the short intersection product.
We need to check that the following diagram commutes: 
\begin{equation*}
  \xymatrix{
    C_*(\E,\R)\ar[r] \ar[d]_F & C_*(\E,\E|_{M^c}\cup \R) \ar[d]^F & \ar[l]_-{\simeq}  C_*(\E|_{U_M},\E|_{M^c}\cup \R|_{U_M})\ar[d]^F \ar[rr]^-{p^*\tau_M\cap} && C_*(\E|_{U_M},\R|_{U_M}) \ar[d]^F\\
     C_*(\E',\R')\ar[r]  & C_*(\E',\E'|_{M^c}\cup\R') & \ar[l]_-{\simeq}  C_*(\E'|_{U_M},\E'|_{M^c}\cup \R'|_{U_M})\ar[rr]^-{(p')^*\tau_M\cap} && C_*(\E'|_{U_M},\R'|_{U_M}) }
 \end{equation*}
 The first square commutes already on the space level, the second by naturality of the excision isomorphism and the last by naturality of the cap product as $F^*(p')^*\tau_M=p^*\tau_M$ since we assumed that $p'\circ F=p$. The first part of the result follows after picking a homotopy inverse to excision via small simplices. 

  When the maps are moreover fibrations over $U_M$, we use the equivalence  $U_M\xleftarrow{\sim} M$ to get a further  diagram 
$$\xymatrix{
C_*(\E|_{U_M},\R|_{U_M}) \ar[d]^F  & \ar[l]_-{\simeq}  C_{*-n}(\E|_M,\R|_{M}) \ar[d]^F\\
 C_*(\E'|_{U_M},\R'|_{U_M})  & \ar[l]_-{\simeq}  C_{*-n}(\E'|_M,\R'|_{M}) }$$
 that commutes as it commutes already on the space level. The fact that $(F,F_0$ respects the full intersection product $int_M$ up to homotopy follows. 
    \end{proof}

\section{Invariance of intersection products along a homotopy equivalence}\label{sec:inv1}

Let  $f:M_1\to M_2$ be a smooth map  and
suppose now that we have fibrations $p_1:\E_1\to M_1\x M_1$ and  $p_2:\E_2\to M_2\x M_2$ together with a map $F:\E_1\to \E_2$ over $f\x f$. Such a map always factors as
\begin{align}\label{eq:fibmap}
\xymatrix{\E_1 \ar[r]^-{} \ar[d]_{p_1} & \E_{11,2}:= (f\x f)^*\E_2 \ar[r]^-{} \ar[]!<-7ex,-2ex>;[dl]!<2ex,0ex>  & \E_2\ar[d]^{p_2}\\
  M_1\x M_1\ar[rr]^-{f\x f} && M_2\x M_2.
                               }\end{align}

If we consider the pull-back $ \E_{11,2} $ as a space over $M_1\x M_1$,  Proposition~\ref{prop:easynat} already tells us that the map $\E_1\to  \E_{11,2} $ respects the intersection product $int_{M_1}$. 
If we instead consider it as a space over $M_2\x M_2$, the same result
tells us that the map $\E_{11,2} \to \E_2$ respects the intersection
product $int_{M_2}$. But we have so far no way of  comparing the
intersection products $int_{M_1}$ and $int_{M_2}$ on $ \E_{11,2}$. To
compare them, we will follow the pattern of argument given in the case of the homology intersection product in Section~\ref{sec:int}, working also with the intermediate spaces  obtained from pulling back $\E_2$ to $M_1\x M_2$ and $M_1\x M_2\x D^k$, as well as various normal bundles considered in that section, see also Examples~\ref{ex3} and~\ref{ex3bis}.

\medskip

We will in what follows assume that we have fibrations $\E_i\to M_i\x M_i$, and we will treat the following two types of relative situations:
\begin{align}\label{eq:maprel2}
\xymatrix@R=1pc@C=1pc{
 \R_1\ar[dddr]\ar[dr] \ar@{-}[rr]^-{F_0} & & \R_2 \ar[dr]\ar@{..>}[dddr] & &&  & \R_1\ar[dr]\ar[dd]\ar@{-}[rr]^-{F_0} & & \R_2 \ar[dr]\ar@{..>}[dd] &
\\
&  \E_1\ar[rr]^-(.4)F \ar[dd]^(.35){p_1} & & \E_2\ar[dd]^(.4){p_2}  &&&  &  \E_1\ar[rr]^-(.35)F \ar[dd]^(.35){p_1} & & \E_2\ar[dd]^(.4){p_2} 
\\
&&&&&  & M_{1}  \ar[dr]\ar@{..}[rr] & & M_{2}\ar[dr] &
\\
\textrm{(A)}   & M_1\x M_2\ar[rr]^-{f\x f}  & & M_2\x M_2 & & &  \textrm{(B)}\ \  & M_1\x M_2\ar[rr]^-{f\x f}  & & M_2\x M_2 
}
\end{align}                                                                                                             

\begin{assum}\label{ass:A} We have a  commuting diagram as in (\ref{eq:maprel2}\,A) with the maps $p_i:\E_i\to M_i\x M_i$ and  $\R_i\to \E_i\to M_i\x M_i$ for $i=1,2$ being fibrations.
  (The case $\R_i=\emptyset$ gives the non-relative case.) 
  \end{assum}

  \begin{assum}\label{ass:B} 
     We have a  commuting diagram as in (\ref{eq:maprel2}\,B) with the maps
$\E_i\to M_i\x M_i$  and  $\R_i\to M_{i}$ being fibrations. 
  \end{assum}

 In Section~\ref{sec:3.1} below, we  will  construct a diagram of the following shape under either assumption:
$$\xymatrix{C_*(\E_1,\R_1) \ar[rr]^-{int_{U_{1}}}  \ar@{<-->}[d]
  \ar@/_5.0pc/[d]_{(F,F_0)}
  \ar@{}@<-2.5pc>[d]
  &&
  C_{*-n}(\E_1|_{U_{1}},\R_1|_{U_{1}}) \ar[d]
    &  C_{*-n}(\E_1|_{{M_1}},\R_1|_{M_1})\ar[l]_{\sim} \ar[d]^{(F,F_0)} \\ 
C_*(\E_2,\R_2) \ar[rr]_-{int_{U_{2}}}   && C_{*-n}(\E_2|_{U_{2}},\R_2|_{U_{2}})   &  C_{*-n}(\E_2|_{{M_2}},\R_2|_{M_2})\ar[l]_{\sim} 
}$$
where the dashed vertical arrow is a zig-zag.
We will then use the above diagram to prove invariance of the intersection product under Assumption~\ref{ass:A} in Section~\ref{sec:assA}, and describe in Section~\ref{sec:assB} an obstruction to invariance when only Assumption~\ref{ass:B} holds.

\begin{rem}[Variants]\label{rem:R}
  We can always replace $(\E_1,\R_1)$ by $(\E_{11,2},\R_{11,2})$ using Diagram~\eqref{eq:fibmap},  and applying Proposition~\ref{prop:easynat} to compare the intersection products of $(\E_1,\R_1)$ by $(\E_{11,2},\R_{11,2})$.  This gives weaker, but less natural, assumptions on $(\E_1,\R_1)$.
\end{rem}

\subsection{Intermediate spaces}\label{sec:spaces}

To construct the above diagrams, we will use the pull-backs $\E_{N_2}$, $\E_{1,2}$  and $\E_{\bar N_2}$ of $\E_2$ that already appeared in Examples~\ref{ex3} and~\ref{ex3bis}: 
$$\xymatrix@R=1pc{\E_1 \ar[rr] \ar[rd] \ar[dd]   && \E_{1,2}\x D^k\ar[dd]\ar[rr] & &\E_{1,2}\ar[r] \ar[dd] \ar[ddr]& \E_2\ar[dd] \\
    &  \E_{N_2} \ar[dl]\ar@{-->}[dd] \ar@{^(->}[ur] & & \E_{\bar N_2}\ar@{-->}[dd] \ar[ur] \ar@{_(->}[ul] \ar[drr] && \\
  M_1\x M_1\ar[rr]  & & M_1\x M_2\x D^k \ar[rr] & & M_1\x M_2\ar[r] & M_2\x M_2\\
   & N_2 \ar@{-->}[ul] \ar@{^(->}[ur] & & \bar N_2\ar@{-->}[ur] \ar@{_(->}[ul]  & &
}$$
with a compatible diagram with spaces  $\R_{1,2}, \R_{N_2}$ and $\R_{\bar N_2}$ similarly as the pull-backs of $\R_2$ along the same maps. 

Note that under Assumption (B), the spaces  $\R_{1,2}, \R_{N_2}$ and $\R_{\bar N_2}$ will live over ``diagonal'' subspaces, namely the top layer in the following diagram: 
\begin{equation}\label{equ:intsp}
\xymatrix@R=1pc{ M_{1} \ar@{}[dd]^(.1){}="a"^(.95){}="b" \ar@{^(->}"a";"b" \ar[rrr]  && & M_{1}\!\x\! D^k\ar@{^(->}[dd]  \ar[rr] & & M_{1}  \ar@{}[dd]^(.1){}="a"^(.95){}="b" \ar@{^(->}"a";"b" \ar[r] & M_2 \ar@{}[dd]^(.1){}="a"^(.95){}="b" \ar@{^(->}"a";"b" \\  
  & & \boxed{N_2\cap M_{1}\!\x\! D^k}\ar@{^(->}[dd]  \ar@{-->}[ull] \ar@{^(->}[ur] & &  M_{1}\!\x\! \RR^k \ar@{^(->}[dd] \ar@{-->}[ur] \ar@{_(->}[ul]  & & \\
  M_1\!\x\! M_1\ar[rrr]  && & M_1\!\x\!M_2\!\x\! D^k \ar[rr] & & M_1\!\x\! M_2\ar[r] & M_2\!\x\! M_2\\
  & & N_2 \ar@{-->}[ull] \ar@{^(->}[ur] & & \bar N_2\ar@{-->}[ur] \ar@{_(->}[ul]  & &
}
\end{equation}

\medskip

The crucial difference between assumptions (A) and (B) is actually visible in this diagram already.  Indeed, if $f:M_1\to M_2$ is a homotopy equivalence, then all the spaces in the bottom part of the diagram are homotopic to the product $M_1\x M_1\simeq M_2\x M_2$, and almost all the spaces in the top part of the diagram are homotopic to their diagonal $M_1\simeq M_2$. The only outlier is the space $N_2\cap (M_1\x D^k)$ (in the box), which in general need not be homotopic to $M_1$.

\begin{lem}\label{lem:N2}
  When $f$ is a homotopy equivalence, the map of pairs $$(\nu_2,\id): (N_2,M_1)\inc (M_1\x M_2\x D^k,M_1)$$ is  a relative homotopy equivalence, where
  $M_1\inc N_2=M_1\x N(f,e) \inc M_1\x M_2\x D^k$ via the map $(\id,f,e)$. 
  \end{lem}

  \begin{proof}
 The map  $\nu_2$ is homotopic to the composition of homotopy equivalences
    $$N_2=M_1\x N(f,e)\xrightarrow{p_2} M_1\x M_1\xrightarrow{\id\x f} M_1\x M_2\arsim M_1\x M_2\x D^k.$$
Hence $\nu_2$ is a homotopy equivalence. Moreover $\nu_2$ restricts to
the identity on the diagonal $M_1$. Hence 
the map $(\nu_2,\id)$ is a map of pairs that is componentwise a homotopy equivalence. The result follows from the fact that the pairs are NDR pairs, applying e.g.~\cite[Chap 6, Sec 5]{MayConcise}. Explicitly, we can exhibit $(N_2,M_1)$, and hence also $(M_1\x M_2\x D^k)$, as an NDR-pair by letting $u:N_2\to I$ be defined by  
$$u(m,n,x)=\left\{\begin{array}{ll} \frac{|(f(m),e(m))-(n,x)|}{\eps_N} & 0\le |(f(m),e(m))-(n,x)|<\eps_N \\
                    1& \textrm{else}   \end{array}\right.$$ where $\eps_N$ is small enough so that $B_{2\eps_N}((f(m),e(m))\subset N(f,e)$ for every $m\in M_1$. Then $q:N_2\x I\to N_2$ is any deformation retraction that collapses these $\eps_N$-balls. 
  \end{proof}

Let $j:(\E_{N_2},\R_{N_2})\to (\E_{1,2}\x D^k,\R_{1,2}\x D^k)$ denote the map identifying the source as a pull-back of the target along the map $N_2\to M_1\x M_2\x D^k$. The above discussion has the following consequence:

    \begin{prop}\label{prop:j}
Suppose $f:M_1\to M_2$ is a homotopy equivalence.  Then the following holds.
 \begin{enumerate}[(i)]  
\item If Assumption~\ref{ass:A} holds, the map $j:(\E_{N_2},\R_{N_2}) \to (\E_{1,2}\x D^k, \R_{1,2}\x D^k)$ is a relative homotopy equivalence. 
\item If only Assumption~\ref{ass:B} holds, $j:(\E_{N_2},\R_{N_2}|_{M_1})\to (\E_{1,2}\x D^k,\R_{1,2}\x D^k|_{M_1})$ is a relative homotopy equivalence.
  \end{enumerate}
\end{prop}

\begin{proof}
The map $j$ fits in a pull-back diagram
$$\xymatrix{\E_{N_2}\ar[r]^-j \ar[d] & \E_{1,2}\x D^k \ar[r]\ar[d]^p & \E_2 \ar[d]^{p_2} \\
N_2\ar[r]^-{\nu_2} & M_1\x M_2\x D^k \ar[r] & M_2\x M_2
}$$
and similarly for the relative terms. 
Now $\nu_2$ is a (relative) homotopy equivalence by Lemma~\ref{lem:N2}. The result follows using that homotopy equivalences and closed cofibrations pull-back along fibrations. 
\end{proof}

\subsection{Comparing intersection products over different manifolds}\label{sec:3.1}

   Recall from Examples~\ref{ex3} and ~\ref{ex3bis} the intersection products $int_{U_{1}}$ on $\E_{N_2}$ and  $int_{U_{2}}$ on $\E_{\bar N_2}$.
The  goal of the section is to construct a homotopy commuting diagram: 
\begin{equation}\label{equ:copdiag2.0}
  { \xymatrixcolsep{3pc}\xymatrix{ C_*(\E_1,\R_1)  \ar[d]^{F_{N_2}} \ar@/_8.0pc/[dddd]_{(F,F_0)}
  \ar@{}@<-4.5pc>[dddd]|(0.64){\fbox{Prop~\ref{prop:lhs}}}  
    \ar[rrr]^-{int_{U_{1}}} && \ar@{}[d]|{\fbox{Prop~\ref{prop:easynat}}}& C_{*-n}(\E_1|_{U_{1}},\R_1|_{U_{1}}) \ar[d]^{F_{N_2}}  \ar@/^5.0pc/[dddd]^{(F,F_0)}
  \ar@{}@<2.5pc>[dddd]|{\fbox{Prop~\ref{prop:retract}}} \\ 
  C_*(\E_{N_2},\R_{N_2}) \ar[rrr]^(.45){int_{U_{1}}} & && C_{*-n}(\E_{N_3},\R_{N_3}) \ar[dd]_{\hat h}
       \ar@{}[ddll]_{\fbox{Prop~\ref{prop:inv1}}} 
    \\ 
  C_{*+k}(\E_{1,2}\!\x\!  D^k,  \E_{1,2}\x \del D^k\cup \R_{1,2}\x D^k) 
  \ar@<-4ex>[u]_-{\llbracket\overline p^*\taue\cap\rrbracket}  \ar@{<-}@/^2pc/@<1ex>[u]^{[\x D^k]\circ j} \ar@<0.7ex>@{}[u]|{\fbox{\ref{lem:diag5}}} 
    \ar[d]^-{\llbracket\overline p^*\etab\,\cap\rrbracket}_-\simeq 
 &&& \\
 C_*(\E_{\bar N_2},\R_{\bar N_2}) \ar[d]
\ar[rrr]^(.45){int_{U_{2}}} 
&&   \ar@{}[d]|{\fbox{Prop~\ref{prop:easynat}}} & C_{*-n}(\E_{\bar N_3},\R_{\bar N_3}) \ar[d]  \\ 
C_*(\E_2,\R_2) \ar[rrr]^-{int_{U_{2}}}   & & & C_{*-n}(\E_2|_{U_{2}},\R_2|_{U_{2}}) 
}}
\end{equation}
where we already know that the top and bottom squares homotopy commute as a direct application of Proposition~\ref{prop:easynat}. In this section, we will
define the map $\hat h$ from the map $h$ of Proposition~\ref{prop:tub3}, and show that the diagram commutes. Note that this last statement is a priori ambiguous for the bigone in the diagram. What will be needed to conclude equality of the two outer compositions in the diagram is that the map denoted $[\x D^k]\circ j$ is a right inverse  to $\llbracket\overline p^*\taue\cap\rrbracket$.  We will show in Lemma~\ref{lem:diag5} that $[\x D^k]\circ j$ is in fact a left inverse under both assumption (A) and (B), and an actual inverse under assumption (A). And this is where the obstruction of invariance comes from in case (B). 
Sections~\ref{sec:assA} and~\ref{sec:assB} will then deal with what we can conclude in each case separately.

We start by carefully defining the map $[\x D^k]\circ j$ and analysing the bigone. 
Let 
\begin{align*}
 \iota \colon & \big(\E_{1,2}\x  D^k,\E_{1,2}\x  \del D^k\cup \R_{1,2}\x D^k\big) \rar \big(\E_{1,2}\x D^k,(\E_{N_2})^c\cup \R_{1,2}\x D^k\big)\\
j \colon & (\E_{N_2},\R_{N_2}) \inc (\E_{1,2}\x D^k,\R_{1,2}\x D^k)
\end{align*}
denote the maps of pairs, and similarly for $\bar \iota$ and $\bar j$ where $N_2$ is replaced by $\bar N_2$.

\medskip

Recall that $\taue$ and $\bar\eta$ are Thom classes for the bundles $N_2$ and $\bar N_2$ respectively.

\begin{lem}\label{lem:diag5}
  If $f$ is a degree 1 map,   then the following diagram  commutes up to homotopy: 
\begin{equation}\label{equ:diag5E}
\xymatrix{C_*(\E_{N_2},\R_{N_2}) \ar[d]^{j} & & C_{*+k}(\E_{1,2}\x D^k,(\E_{N_2})^c\cup \R_{1,2}\x D^k) \ar@{-->}[ll]^-\simeq_-{[ p^*\taue\cap]} \\
  C_*(\E_{1,2}\x D^k,\R_{1,2}\x D^k)  & & C_{*+k}(\E_{1,2}\x D^k,\E_{1,2}\x \del D^k\cup \R_{1,2}\x D^k) \ar@{-->}[d]_\simeq^{\bar \iota} \ar@{-->}[u]_{\iota}   
  \ar[dll]^(0.45){\llbracket p^*\etab\cap\rrbracket}_(0.46)\simeq  \ar[ull]_(0.45){\llbracket p^*\taue\cap\rrbracket} \\
C_*(\E_{\bar N_2},\R_{\bar N_2}) \ar[u]_{\bar j}^\simeq & & C_{*+k}(\E_{1,2}\x D^k,(\E_{\bar N_2})^c\cup \R_{1,2}\x D^k) \ar@{-->}[ll]_-\simeq^-{[p^*\etab\cap]}\\
}
\end{equation}
where the dashed arrows are added for clarity; by definition the two triangles commute.
Moreover, 
the map $\llbracket p^*\taue\cap\rrbracket$ is homotopy invertible if and only if the map $j$ is, and if it is,
$$\llbracket p^*\taue\cap\rrbracket^{-1}\simeq \llbracket p^*\etab\cap\rrbracket^{-1}\circ \bar j^{-1}\circ j\simeq  [\x D^k]\circ j$$
for  $[\x D^k]: C_*(\E_{1,2}\x D^k,\R_{1,2}\x D^k)\simeq  C_*(\E_{1,2},\R_{1,2}) \xrightarrow{\x [D^k]}  C_{*+k}(\E_{1,2}\x D^k, \E_{1,2}\x \del D^k\cup \R_{1,2}\x D^k)$. 
\end{lem}

\begin{proof}
  Lemma~\ref{lem:degree} gives that $i^*[\taue]=[\eta]\in C^k(M_2\x D^k,M_2 \x \del D^k)$.
Using the commutativity of 
$$\xymatrix{ (\E_{1,2}\x  D^k,\E_{1,2}\x \del D^k) \ar[d]_{p}\ar[r]^-{\iota} & (\E_{1,2}\x D^k,(\E_{N_2})^c) \ar[d]^{p}\\
(M_1\x M_2\x D^k,M_1\x M_2\x \del D^k) \ar[r]^-{\id\x i}  & (M_1\x M_2\x D^k,N_2^c).
}$$
we can deduce that  $$\iota^*p^*[\taue]=p^*(\id\x i)^*[\taue]=p^*[1\x \eta]\simeq \bar\iota^*\bar \eta.$$
Hence capping with either class defines a map
$$C_*(\E_{1,2}\x D^k,\R_{1,2}\x D^k)  \lar C_{*+k}(\E_{1,2}\x D^k,\E_{1,2}\x \del D^k\cup \R_{1,2}\x D^k) $$
in the middel of the diagram making both the top and bottom resulting squares homotopy commute. 

The maps $\bar j$ and $\bar \iota$ are rescaling maps in the $\RR^k$ coordinates, and the top and bottom horizontal maps are equivalences by the Thom isomorphism.
Finally, the fact that the composition $\llbracket p^*\etab\cap\rrbracket^{-1}\circ \bar j^{-1}\simeq [\x D^k]$ follows from the fact that the homotopy inverse of the compatible map $[\eta \cap]$ is given by crossing with a disc.  
  \end{proof}

 The commutativity of the left hand side of Diagram~\eqref{equ:copdiag2.0} is given by the following: 

  \begin{prop}\label{prop:lhs}
The left hand side of  Diagram~\eqref{equ:copdiag2.0} commutes up to homotopy. 
    \end{prop}

    \begin{proof}
 Consider the diagram 
      $$\xymatrix{C_*(\E_1,\R_1) \ar[dd]_{(F,F_0)}\ar@/_1pc/[ddr] \ar[r]& C_*(\E_{N_2},\R_{N_2}) \ar[d]_j \ar[ddr]^-{[\x D^k]\circ j} & \\
     &   C_*(\E_{1,2}\x D^k, \R_{1,2}\x D^k) & \\ 
     C_*(\E_2,\R_2) &     C_*(\E_{\bar N_2},\R_{\bar N_2}) \ar[l] \ar[u]^{\bar j}_\cong 
     &   C_{*+k}(\E_{1,2}\x D^k, \R_{1,2}\x D^k\cup \E_{1,2}\x \del D^k) \ar[l]^-{[[p^*\bar\eta \cap]]}_-\simeq
       }$$
 The leftmost triangles commute on the space level, while the left part commutes by Lemma~\ref{lem:diag5}.
    \end{proof}

    We now turn to the map $\hat h$ and the right part of Diagram~\eqref{equ:copdiag2.0}.

    \smallskip

Recall from Proposition~\ref{prop:tub3} the diffeomorphism  $h: M_1\x M_2\x D^k\rar M_1\x M_2\x D^k$, isotopic to the identity,  that identifies the tubular neighborhoods $\nu_3N_3$ and $\bar\nu_3\bar N_3$ fixing their 0-section $M_1$, and that is the identity outside $U_{1,2}\x \mathring{D}^k$.

\begin{prop}\label{prop:hhat} Suppose that $(F,F_0):(\E_1,\R_1)\to (\E_2,\R_2)$ is as in Assumption (A) or (B).
    Then there is a pair of continuous maps $$(\hat h,\hat h_0)\colon (\E_{1,2}\x D^k,\R_{1,2}\x D^k) \rar  (\E_{1,2}\x D^k,\R_{1,2}\x D^k)$$ such that
\begin{enumerate}
\item The  map $p:\E_{1,2} \x D^k \to M_1\x M_2\x D^k$ intertwines the maps $\hat h$ and $h$, for $h$ the diffeomorphism of Proposition~\ref{prop:tub3}.
  Likewise, the map $\R_{1,2} \x D^k \to M_1\x M_2\x D^k$ 
  intertwines the maps $\hat h_0$ and $h$. 
  In particular, $\hat h(\E_{N_3})\subset \E_{\bar N_3}$ and  $\hat h((\E_{N_3})^c)\subset (\E_{\bar N_3})^c$ and $\hat h_0(\R_{N_3})\subset \R_{\bar N_3}$;
\item The map $\hat h$ fixes $\E_{1,2}\x D^k|_{M_1}$ and $\E_{1,2}\x D^k|_{(U_{1,2}\x \mathring D^k)^c}$; 
\item The pair $(\hat h,\hat h_0)$ is homotopic to the identity relative to $\E_{1,2}\x D^k|_{M_1}$ and $\E_{1,2}\x D^k|_{(U_{1,2}\x \mathring D^k)^c}$. 
\end{enumerate}
\end{prop}

\begin{proof} 
  From Proposition~\ref{prop:tub3} we have that $h=\id \x \,\tilde h:M_1\x M_2\x D^k\to M_1\x M_2\x D^k$ restricting to the identity outside $U_{1,2}\x \mathring D^k$ by (1) in the proposition. Write  $h(m,n,x)=(m,n',x')$. If $(m,n,x)\neq (m,n',x')$, which can only happen if they are in $U_{1,2}\x D^k$, let $\id\x \la$ in $U_{1,2}\x D^k$ be the geodesic between these two points, keeping the $M_1$-coordinate fixed. (The $M_2$ component of $\la$ is a straight line in the ball of radius $\eps$ around $f(m)$ in $M_2$, and its $D^k$ component is the straight line from $x$ to $x'$ in $D^k$.)
  Define 
  $$\underline h: U_{1,2}\x D^k\x I\to U_{1,2}\x D^k$$
  by $\underline h(m,n,x,t)=(m,\la(t))$, with $\la$ the above defined path. This is a continuous map, and it extends to a map $\underline h: M_1\x M_2\x D^k\x I\to M_1\x M_2\x D^k$ by defining it to be the projection outside $U_{1,2}\x D^k$. 

Recall that $h$ restricts to the identity on $M_1\xrightarrow{(\id,f,e)} M_1\x M_2\x D^k$ as well as outside $U_{1,2}\x \mathring D^k$. 
 Let $\underline p=p\x \id:\E_{1,2}\x D^k\x I\to M_1\x\M_2\x D^k\x I$  denote the pulled back fibration and consider the lifting problem
  $$\xymatrix{(\E_{1,2}\x D^k)\x\{0\} \cup \big( (\E_{1,2}\x D^k)|_{M_1}\cup (\E_{1,2}\x D^k|_{U_{1,2}^c\x D^k})\big)\x I \ar[r]^-{\id} \ar@{^(->}[d] & \E_{1,2}\x D^k\ar[d]^p\\
    (\E_{1,2}\x D^k)\x I \ar[r]^-{\underline{h}\circ \underline p} \ar@{-->}[ur]^{\widehat {\underline{h}\circ \underline p}} &M_1\x M_2\x D^k
  }$$
  and similarly for $\R_{1,2}\x D^k$,  restricting to the subspace $U_{1,2}\x D^k$ in case (B). 
  We define $\hat h$ by setting $\hat h(m,\ga,x)=\widehat {\underline{h}\circ \underline p}(m,\ga,x,1)$ after solving the resulting relative lifting problem.
All the desired properties then hold by construction. 
\end{proof}

We now show that the middle square in Diagram~\ref{equ:copdiag2.0} commutes.

\begin{prop}\label{prop:inv1} Suppose that $(F,F_0):(\E_1,\R_1)\to (\E_2,\R_2)$ is a map as in Assumption (A) or (B). 
  Then the middle square in Diagram~\eqref{equ:copdiag2.0} 
  $$\xymatrix{ C_*(\E_{N_2},\R_{N_2})\ar[d]_{int_{U_{1}}} & \ar@{-->}[dl]_{[[p^*\tau_3\cap]]}\ar@{-->}[dr]^{[[p^*\bar \tau_3\cap]]} C_{*+k}(\E_{1,2}\x D^k,\R_{1,2}\x D^k \cup \E_{1,2}\x \del D^k)\ar[l]_-{[[p^*\bar\tau_e\cap]]}  \ar[r]^-{[p^*\bar\eta\cap]}  & C_*(\E_{\bar N_2},\R_{\bar N_2}) \ar[d]^{int_{U_{2}}} \\
 C_{*-n}(\E_{N_3},\R_{N_3}) \ar[rr]^-{\hat h}    && C_{*-n}(\E_{\bar N_3},\R_{\bar N_3}) 
}$$
commutes up to chain homotopy. 
Moreover, the maps $[p^*\eta\cap]$ and $\hat h$ are homology isomorphisms. 
  \end{prop}

\begin{proof}  Recall from Propositions~\ref{prop:tub12} and~\ref{prop:tub12bis} that the classes  $\tau_3\in C^{n+k}(M_1\x M_2\x D^k,\nu_3(N_3)^c)$ and  $\bar\tau_3\in C^{n+k}(M_1\x M_2\x D^k,\nu_3(\bar N_3)^c)$ are Thom classes for the bundles $N_3$ and $\bar N_3$. 
  The left and right triangles in the diagrams are the lift to $\E_{1,2}\x D^k$  of the triangles occurring in Propositions~\ref{prop:tub12}(3)  and~\ref{prop:tub12bis}(3), with the left diagonal map capping with $p^*\tau_3$ and right diagonal map capping with $p^*\bar\tau_3$. The commutativity of the diagrams follows from Proposition~\ref{prop:comptub} using the relations  $\tau_{3}:=p_2^*\tau_{M_1}\cup \taue$ and $\bar\tau_{3}:=\bar p_2^*(f\x 1)^*\tau_{M_2}\cup \etab $. 
Finally the middle triangle commutes by Proposition~\ref{prop:hhat} (3), that gives in particular that $(\hat h,\hat h_0)$ is homotopic to the identity relative to $\E_{1,2}\x \del D^k$, and Proposition~\ref{prop:tub3}(4), that gives the compatibility between capping with $\tau_3$ and with $\bar \tau_3$. 
\end{proof}

Finally, the following result gives the compatibility between the right-hand vertical composition $[\hat h]$ induced by $\hat  h$  in Diagram~\ref{equ:copdiag2.0} and the map induced by $(F,F_0)$ on the fibrations restricted to the neighborhoods of the diagonals, or equivalently to the diagonal itself:

\begin{prop}\label{prop:retract}
  The diagram of pairs of spaces
  $$\xymatrix{(\E_1|_{U_{1}},\R_1|_{U_{1}})\ar[d]  && \ar@{_(->}[ll]_\simeq (\E_1|_{M_1},\R_1|_{M_1}) \ar[d] \ar@/^3.0pc/[ddd]^{(F,F_0)} \\
    (\E_{N_3},\R_{N_3})=(\E_{N_2}|_{U_{1}},\R_{N_2}|_{U_{1}})\ar[d]_{(\hat h,\hat h_0)}  && \ar@{_(->}[ll] \ar[dd] \ar@{_(->}[lld] \E_{N_2}|_{M_1} \\
   (\E_{\bar N_3},\R_{\bar N_3})=(\E_{\bar N_2}|_{U_{1}},\R_{\bar N_2}|_{U_{1}}) \ar[d]&& \\
(\E_2|_{U_{2}},\R_2|_{U_{2}}) && \ar@{_(->}[ll]_\simeq (\E_2|_{M_2},\R_2|_{M_2}) 
}$$
commutes. Moreover, under Assumptions~\ref{ass:A} and~\ref{ass:B}, the horizontal maps are homotopy equivalences. 
In particular, the right hand side of Diagram~\ref{equ:copdiag2.0} homotopy commutes. 
  \end{prop}

  \begin{proof}
    The first statement follows from the fact that $(\hat h,\hat h_0)$ is isotopic to the identity relative to $\E_{N_2}|_{M_1}=\E_{1,2}\x D^k|_{M_1}$   by Proposition~\ref{prop:hhat} (3), giving the commutativity of the middle triangle, and the second follows from the fact that $\E_i\to U_{i}$ are fibrations, and likewise for $\R_i\to U_i$ in case (A), and $M_i\to U_{i}$ are homotopy equivalences, for $i=1,2$. In case (B), we use instead that $\R_i|_{U_i}=\R_i|_{M_i}$.  
    \end{proof}

    \subsection{Invariance of intersection products under Assumption~\ref{ass:A} and proof of Theorem~\ref{thm:fibinv0}}
    \label{sec:assA}

In this section, we assemble the results we have proved so far to show homotopy invariance of intersection products under assumption (A), that is we prove Theorem~\ref{thm:fibinv0} from the introduction.  We then look at a few examples.

Theorem~\ref{thm:fibinv0} is closely related to \cite[Thm 4.11]{NRW}. We give here a complete proof for completeness.

\begin{proof}[Proof of Theorem~\ref{thm:fibinv0}]
Proposition~\ref{prop:j} (i) and (iii) gives that the map $$j:(\E_{N_2},\R_{N_2})\to (\E_{1,2}\x D^k,\R_{1,2}\x D^k)$$ is a relative homotopy equivalence. Hence by Lemma~\ref{lem:diag5}, $\llbracket p^*\taue\cap\rrbracket$  is homotopy invertible with $\llbracket p^*\taue\cap\rrbracket^{-1}\simeq [\x D^k]\circ j$ and  Diagram~\eqref{equ:copdiag2.0} gives a meaningful commutivative diagram in homology, once we apply Propositions~\ref{prop:easynat}, \ref{prop:lhs}, \ref{prop:inv1}, and \ref{prop:retract}, as indicated in the diagram. The result follows. 
   \end{proof}

\begin{cor}[Chas-Sullivan product and relative version]\label{cor:CSrel}
  Let $f:M_1\to M_2$ be a degree 1 homotopy equivalence. Then the induced map $F=\La f:\La_1\to \La_2$ respects the Chas-Sullivan product, also when considered relative to ``half-constant loops''. That is,  the diagram
  $$\xymatrix{H_*(\La_1\x \La_1,M_1\x \La_1\cup \La_1\x M_1) \ar[rr]^-{int_{M_1}}\ar[d] && H_{*_n}(\La_1\x_{M_1} \La_1, M_1\x_{M_1} \La_1\cup \La_1\x_{M_1}M_1 )\ar[d]  \\
    H_*(\La_2\x \La_2,M_2\x \La_2\cup \La_2\x M_2) \ar[rr]^-{int_{M_2}} && H_{*_n}(\La_2\x_{M_2} \La_2, M_2\x_{M_2} \La_2\cup \La_2\x_{M_2}M_2 ) }$$
and its non-relative version commute. Composed with the concatenation map, it gives the homotopy invariance of the Chas-Sullivan product, as first proved in \cite{CKS}. 
\end{cor}

\begin{proof}
  Apply Theorem~\ref{thm:fibinv0} to the fibrations $\e_0\x \e_0:\E_i=\La_i\x \La_i\to M_i\x M_i$ for $i=1,2$  and their restriction to $\R_i=M_i\x \La_i\cup \La_i\x M_i$, with the map $F=\La f\x \La f$. 
\end{proof}

\begin{ex}[Products for mapping spaces]\label{ex:pmap}
  Given spaces $X_1,X_2$ with chosen good basepoints $x_1,x_2$ (so the evaluation maps are fibrations), and a map $X_3\to X_1\cup_{x_1\sim x_2}X_2$, one can mimmic the definition of the Chas-Sullivan product and get an intersection product
  $$H_*(\textrm{Map}(X_1,M)\x \textrm{Map}(X_2,M)) \xrightarrow{int_M} H_{*-n}(\textrm{Map}(X_1,M)\x_M\textrm{Map}(X_2,M)) \rar H_{*-n}(\textrm{Map}(X_3,M))$$
  that is homotopy invariant by Theorem~\ref{thm:fibinv0}. For example, one could take $X_1=X_2=X_3=S^n$ with a chosen map
  $$X^3=S^n\to S^n/S^{n-1}\arsim S^n\vee S^n= X_1\cup_{x_1\sim x_2}X_2$$ induced by collapsing an equatorial sphere. 
  \end{ex}

\begin{ex}[Trivial coproduct]
  The homotopy invariance of the non-relative (and almost completely trivial) coproduct follows from the above theorem, using the fibrations $\e_{0,\frac{1}{2}}:\La_i\to M_i\x M_i$ with $F=\La f$.
\end{ex}

\begin{non-ex}[Good coproduct]
  Consider now the subspace $\R_i\subset \La_i$ of {\em half-constant loops}, i.e.~loops $\ga$ such that $\ga$ is constant on $[0,\ha]$ or $[\ha,1]$. 
 The coproduct of the previous example on the pair $(\La_i,\R_i)$ does not satisfy the assumptions of the theorem anymore, since in this case the restriction $\e_{0,\frac{1}{2}}:\R_i\to M_i\x M_i$ is only a fibration over the diagonal.
\end{non-ex}

\begin{non-ex}[$S^1$-coproduct]
  Another example that does not satisfy the assumptions of  Theorem~\ref{thm:fibinv0} is the $S^1$--parametrized coproduct associated to the map $$\e_{0,t}:\La_i\x S^1\to M_i\x M_i$$ evaluating the loop at time 0 and $t\in S^1$. Indeed, this map is not a fibration over $M_i\x M_i$. 
\end{non-ex}

 \subsection{Quantifying the failure of invariance of relative intersection products under Assumption~\ref{ass:B}}\label{sec:assB}

 Suppose that the map $(F,F_0):(\E_1,\R_1)\to (\E_2,\R_2)$ only satisfies the weaker 
 Assumption~\ref{ass:B}, that is $\E_i\to M_i\x M_i$ are fibrations but $\R_i\to M_i$ are only fibrations over the diagonal $M_{i}\subset M_i\x M_i$. 
 Forgetting about the relative parts $\R_1$ and $\R_2$,  Theorem~\ref{thm:fibinv0} gives that the map $F: C_*(\E_1)\to C_*(\E_2)$ respects the intersection products up to chain homotopy when $f$ is a degree 1 homotopy equivalence.
 This can be used to obtain an obstruction of invariance of the relative map $(F,F_0)$ using the following algebraic lemma:

 \begin{lem}\label{lem:alg}
   Suppose that $\phi,\psi:(C_*,D_*)\to (E_*,F_*)$ are relative chain maps that are non-relatively  chain homotopic: $\phi\simeq \psi:C_*\to E_*$. Let $H:C_*\to E_{*+1}$ be a chain homotopy between $\phi$ and $\psi$. Then  for a relative cycle $a\in C_*$ with $\del a\in D_{*-1}$, we have that
   $$\phi(a)-\psi(a)=H(\del a)\in H_*(E_*,F_*).$$ 
 \end{lem}

 \medskip

 We want to apply the lemma to the maps $\phi=F\circ int_{M_1}$ and $\psi=int_{M_2}\circ F$ from $(C_*,D_*)=\big(C_*(\E_1), C_*(\R_1)\big)$ to $(E_*,F_*)=\big(C_*(\E_2),C_*(\R_2)\big)$, with the homotopy $H$ coming from the homotopy commutativity of Diagram~\eqref{equ:copdiag2.0}, as given by Theorem~\ref{thm:fibinv0} in the non-relative case.

 As we have seen, a key difference between Assumptions~\ref{ass:A} and~\ref{ass:B}, given in Proposition~\ref{prop:j}, is whether or not the map
 $j:(\E_{N_2},\R_{N_2})\to (\E_{1,2}\x D^k,\R_{1,2}\x D^k)\simeq (\E_{1,2},\R_{1,2})$  is a relative homotopy equivalence. By Lemma~\ref{lem:diag5}, this determines whether or not the map $[\x D^k]\circ j$ in Diagram~\eqref{equ:copdiag2.0} is a right homotopy inverse to the map $\llbracket\overline p^*\taue\cap\rrbracket$. 
Let 
$$\alpha: H_*(\E_{N_2},\R_{N_2})\xrightarrow{j} H_*(\E_{1,2},\R_{1,2})\xrightarrow{ [\x D^k]} H_*(\E_{1,2}\x D^k,\R_{1,2}\x D^k) \xrightarrow{[[p^*\bar \tau_e\cap]]}   H_*(\E_{N_2},\R_{N_2}).$$
By Proposition~\ref{prop:j} and Lemma~\ref{lem:diag5}, its restriction 
$\alpha': H_*(\E_{N_2},\R_{N_2}|_{M_1})\rar  H_*(\E_{N_2},\R_{N_2}|_{M_1})$
is homotopic to the identity, but under Assumption~\ref{ass:B}, we do not have in general that $\al$ itself is homotopic to the identity. 
In the next result, we will express the failure of commutativity of Diagram~\eqref{equ:copdiag2.0} in terms of the map $\al$ and the tightly related failure of the map $j$ to be a relative homotopy equivalence.

\medskip

Let 
$L_{N_2}:\E_{N_2}\x I\to \E_{N_2}$
be a homotopy between the identity and $j^{-1}\circ j: H_*(\E_{N_2},\R_{N_2}|_{M_1}) \to H_*(\E_{N_2},\R_{N_2}|_{M_1})$, as given by Proposition~\ref{prop:j}. It induces a map 
$$L_{N_2}(-\x I): H_*(\R_{N_2})\rar  H_{*+1}(\E_{N_2},\R_{N_2})$$
 measuring the failure of $L_{N_2}$ to be a homotopy equivalence relative to $\R_{N_2}$.

\begin{thm}\label{thm:Hcommute0} 
   Let $F:(\E_1,\R_1)\to (\E_2,\R_2)$ be as in Assumption~\ref{ass:B}. The difference $F\circ int_{M_1}- int_{M_2}\circ F$ in homology is given by the composition
  \begin{multline*}H_*(\E_1,\R_1)\xrightarrow{\ \del\ } H_{*-1}(\R_1) \xrightarrow{\,F_{N_2}\,} H_{*-1}(\R_{N_2})\xrightarrow{\ \al\ } H_{*-1}(\R_{N_2})\xrightarrow{L_{N_2}(-\x I)} H_{*}(\E_{N_2},\R_{N_2})  \\
    \xrightarrow{int_{U_{1}}} H_{*-n}(\E_{N_3},\R_{N_3}) \xrightarrow{\ \hat h\ } C_{*-n}(\E_{\bar N_3},\R_{\bar N_3}) \xrightarrow{\ \ \ } C_{*-n}(\E_2|_{{M_2}},\R_2|_{{M_2}}).
      \end{multline*}
      That is for $A\in H_*(\E_1,\R_1)$, we have 
$$\big(F\circ int_{M_1}- int_{M_2}\circ F\big) (A)=int_{U_1}(L_{N_2}(\al(\del_{N_2}A)\x I)),$$
where $\del_{N_2}A$ denotes the image of  $\del A$ in $\R_{N_2}$ and where  we have suppressed the last two maps from the notation. 
\end{thm}

\begin{proof}
  We start as in the proof of Theorem~\ref{thm:fibinv0} with Diagram~\eqref{equ:copdiag2.0}, where we use the map $[\x D^k]\circ  j$ and remove the map $\llbracket\overline p^*\taue\cap\rrbracket$.
  With this choice, the middle pentagon does not commute anymore. More precisely, from Proposition~\ref{prop:inv1}, we see that what does not commute anymore is a triangle
   $$\xymatrix{C_{*}(\E_{N_2},R_{N_2})\ar[dr]_(.4){[\x D^k]\circ j\ \ } \ar[rr]^-{int_{U_{1}}}  & & C_{*-n}(\E_{N_3},R_{N_3}) \\
    & C_{*+k}(\E_{1,2}\x D^k,\E_{1,2}\x \del D^k \cup \R_{1,2}\x D^k)  \ar@{}[u]|(.6){\mbox{}} \ar[ur]_(.6){[[p^*\tau_3\cap]]}& 
  }$$
  where we know that the triangle where we replace $[\x D^k]\circ j$ by its non-relative homotopy inverse commutes, i.e.
  $$int_{U_1}\circ \llbracket\overline p^*\taue\cap\rrbracket\ \simeq_{H_1}\, [[p^*\tau_3\cap]]$$
for some homotopy $H_1$, which we know exists by Proposition~\ref{prop:inv1}, respecting the relative terms. We thus have
\begin{align*}   [[p^*\tau_3\cap]] \circ [\x D^k]\circ  j\simeq_{H_1} {int_{U_{1}}}\circ \llbracket\overline p^*\taue\cap\rrbracket \circ [\x D^k]\circ  j= {int_{U_{1}}}\circ \al
    \end{align*}
   by definition of $\al$. So we can obtain a homotopy for the commutativity of the above triangle (non-relatively) from a homotopy witnessing that the map $\al$
   is (non-relatively) homotopic to the identity. We can get such a homotopy in terms of the homotopy $L_{N_2}$ as follows:
\begin{align*}
   \al\ \simeq_{L_{N_2}\circ \al} \ j^{-1}\circ j\circ\al & =\ j^{-1}\circ j\circ [[p^*\bar \tau_e\cap]]\circ [\x D^k]\circ j\\
                                                                    &\simeq \ j^{-1}\circ\bar j \circ [[p^*\bar \eta\cap]]\circ [\x D^k]\circ j\\
        &\simeq \ j^{-1}\circ\bar j \circ \bar j^{-1}\circ j\ \simeq\  j^{-1}\circ j\  \simeq_{\overline{L_{N_2}}}\ \id \\
\end{align*}
where $\overline{L_{N_2}}$ is the homotopy $L_{N_2}$ in reverse. So in relative homology, the homotopy between $\al$ and the identity is the sum $H=(L_{N_2}(\al(-)\x I) + \overline{L_{N_2}}(-\x I)$.

Concretely, the homotopy $H$ gives a homotopy for the commutativity of middle pentagon in Diagram~\eqref{equ:copdiag2.0} as follows:
\begin{align*}
  \hat h\circ int_{U_1}\ \simeq_{\hat h\circ int_{U_1}\circ \overline H}\ \hat h\circ int_{U_1}\circ \al\ &=\  \hat h\circ int_{U_1}\circ [[p^*\bar \tau_e\cap]]\circ [\x D^k]\circ j \\
  & \simeq\ int_{U_2}\circ  [[p^*\bar \eta\cap]]\circ [\x D^k]\circ j
  \end{align*}
where ${\overline H}=(\overline{L_{N_2}}(\al(-)\x I)) + L_{N_2}(-\x I)$ is $H$ in reverse and where the last homotopy is given by Proposition~\ref{prop:inv1}, and respects the relative terms.

It follows then from Lemma~\ref{lem:alg} that the failure of invariance on a class $A\in H_*(\E_1,\R_1)$ is computed by the resulting homotopy applied to the image $\del_{N_2}A$ of its boundary $\del A$ in $H_{*-1}(\R_{N_2})$, followed by the forgetful map $u: C_{*-n}(\E_{\bar N_3},\R_{\bar N_3}) \rar C_{*-n}(\E_2|_{{M_2}},\R_2|_{{M_2}})$:
$$u\circ (\hat h\circ int_{U_1}\circ\overline H(\del_{N_2} A))=u\circ \hat h\circ int_{M_1}({\overline L_{N_2}}(\al(\del_{N_2} A)\x I))+u\circ \hat h\circ int_{M_1}(L_{N_2}(\del_{N_2}A\x I)),$$
where only the first term is non-trivial, since $\del_{N_2} A$ lies inside $\R_{N_2}|_{M_1}$, which is preserved by $L_{N_2}$.  
\end{proof}

\section{Mapping spaces}\label{sec:map}

In this section, we give an explicit construction of the homotopy equivalence $L_{N_2}$ appearing in Theorem~\ref{thm:Hcommute0}, satisfying some properties that will be useful in the following section to give a proof of  Theorems~\ref{thm:formula} and~\ref{thm:formula2}. We restrict our attention to certain pairs $(\E,\R)$ coming from mapping spaces. We did not attempt to formulate the general set-up in which the construction makes sense, by lack of other motivating examples and fear of making the exposition less clear. The construction will make use of ``higher homotopy data'' for the map $f$.
We start by describing the mapping spaces considered. 

\medskip

Let $S$ be a topological space, and $s_0,s_1\in S$ two points such that $(S,\{s_0,s_1\})$ is an NDR-pair. This means in particular that $S$ comes equipped with a map $u$ as in Figure~\ref{fig:u}. 
  \begin{figure}[h]
    \centering
 \begin{lpic}{Mapu1(0.4,0.4)}
  \lbl[b]{75,20;$\xrightarrow{\ u\ }$}
  \end{lpic}
\caption{The map  $u:S\cong S\x \{1\} \inc S\x I \rar S\x\{0\}\cup \{s_0,s_1\}\x I$, \ $u(s_i)=(s_i,1)$}\label{fig:u}
\end{figure}
We will here consider evaluation maps on mapping spaces of the form  
$$\E=\maps(S,M)\xrightarrow{\e_{s_0,s_1}} M\x M.$$
The map $\e_{s_0,s_1}$ can be seen to be a fibration using the map $u$ just defined.
We will denote $$\E^0:=\{\ga\in \maps(S,M)\ |\ \ga \textrm{ is constant}\}\cong M$$ the subspace of constant maps.

\smallskip

We will consider subspaces of such mapping spaces of the following form: let $P=\{P_a\}_{a\in A}$ be a collection of subspaces of $S$, all containing $s_0$ and $s_1$ and with each $P_a\inc S$ a cofibration. Define 
\begin{align*}\R&:=\cup_{a\in A}\{\ga\in \maps(S,M)\ |\ \ga(s)=\ga(s_0) \ \forall s \in P_a\}
\end{align*}
Then $\e_{s_0,s_1}|_{\R}$ is a fibration over the diagonal in $M\x M$. 

\smallskip

Our main examples will be of the following form: let $S=I^r/\del I^r$ with $s_0={\bf 0}$ and $s_1={\bf c}=(\ha,\dots,\ha)$, and with $P=\{I^{j-1}\x[0,\ha]\x I^{r-j}\}_{j=1}^r\cup \{I^{j-1}\x[\ha,1]\x I^{r-j}\}_{j=1}^r$. Then $\E=\La^rM$ is the $r$-loop space and 
$$\R=\cup_{j=1}^r\{\ga:(I^r/\del I^r)\to M\ |\ \ga|_{I^{j-1}\x[0,\ha]\x I^{r-j}} \ \textrm{or}\  \ga|_{I^{j-1}\x[\ha,1]\x I^{r-j}}\ \textrm{is constant}\} \subset \La^rM,$$
is the subspace of $r$-loops  that are half-constant in some direction $j\in \{1,\dots,r\}$.

\subsection{Higher homotopy data}\label{sec:K}

Let $f:M_1\to M_2$ be a degree 1 homotopy equivalence. Given a tuple $(S,s_0,s_1,P)$ as above, we let $\E_i=\maps(S,M_i)$, with their respective subspaces $\R_i$.  Then $f$ induces a map of pairs of fibrations $(\E_1,\R_1)\to (\E_2,\R_2)$ over $f\x f$ as in Assumption~\ref{ass:B}. 
Using the same notations as in Section~\ref{sec:inv1}, we want to analyse the homotopy equivalence $$j:\E_{N_2}\to \E_{1,2}\x D^k,$$ and most particularly its failure to be an equivalence relative to the subspaces $\R_{N_2}$ and $\R_{1,2}\x D^k$.

\medskip

Recall from Lemma~\ref{lem:N2} that $j$ is a homotopy equivalence (relative to $\R_{N_2}|_{M_1}$ and $\R_{1,2}\x D^k|_{M_1}$) because the map of pairs  $\nu_2: (N_2,M_1)\inc (M_1\x M_2\x D^k,M_1)$ is a relative homotopy equivalence. Let $g$ be a homotopy inverse to $f$. 
An explicit homotopy inverse $\nu_2^{-1}:M_1\x M_2\x D^k\to N_2$ that respects the diagonal $M_1$ is given by setting 
$$\nu_2^{-1}(m,n,x)=\big(g\circ f(m),f\circ g(n),e\circ g(n)\big).$$ Indeed, the  map lands in $N_2$, in fact in its zero section, by the choice of second and third coordinates, and one checks that, with this choice on the first coordinate, it takes the diagonal $M_1$ to itself.

\smallskip

To produce the homotopies that witness that $\nu_2^{-1}$ is indeed a homotopy inverse requires some care, since on the first coordinate one would want to use a homotopy  $h_1:M_1\x I\to M_1$ witnessing that  $g\circ f\simeq \id$, while on the second coordinate, one needs to rather use a homotopy $h_2:M_2\times I\to M_2$ witnessing that $f\circ g\simeq \id$, and these two choices are not compatible with the diagonal, since $f\circ h_1$ is typically not equal to $h_2\circ (f\x \id)$.

Given a homotopy equivalence $f:M_1\to M_2$, it is actually always possible to choose the maps $g,h_1,h_2$ such that there exists  a further homotopy 
$K:M_1\x I\x I\to M_2$ between $h_2\circ (f\x \id)$ and $f\circ h_1$ relative to $M_1\x \del I$, where both maps agree:
$$K(m,s,t)=\left\{\begin{array}{ll}f(m) & s=0 \\
                    f\circ g\circ f(m) & s=1\\
                    h_2(f(m),s) & t=0 \\
                    f\circ h_1(m,s) & t=1
                    \end{array}\right.$$
                  This is possible by Vogt's lemma \cite{Vogt}, and, given the map $f$, a contractible choice by e.g.; Riehl-Verity \cite[Prop 4.4.7]{RV16}.
  Such a map $K$ can be used to show that $\nu_2^{-1}$ is indeed a homotopy inverse to $\nu_2$. 
   
One can think of $K$ as a continuous family of path connecting $h_2(f(m),s)$ and $f(h_1(m,s))$ for each $(m,s)$, 
starting at the constant path at $f(m)$ when $s=0$ and ending at the constant path at $f\circ g\circ f(m)$ when $s=1$. 
To $K$ one can also associate a family of ``push-maps'' pushing $h_2(f(m),s)$ along $K(m,s,-)$ up to time $t$, as given in the following statement. (See also Figure~\ref{fig:vmt}.) 

  \begin{lem}\label{lem:Kdef}
    The homotopy data $(f,g,h_1,h_2,K)$ defines a continuous family of maps 
    $$\Psi\!_{K}: M_1\x I\x I\to \maps(M_2,M_2); \ \ \
    (m,s,t)\mapsto \Psi_K^t(m,s)$$
 with the following properties:
 \begin{enumerate}[(i)]
 \item $\Psi_K^0(m,s)= \Psi_K^t(m,0)=\Psi_K^t(m,1)=\id_{M_2}$; 
 \item $\Psi_K^t(m,s)(h_2(f(m),s))=K(m,s,t)$.
 \end{enumerate}
 In particular, $\Psi_K^1(m,s)(h_2(f(m),s))=f\circ h_1(m,t)$.
\end{lem}

In other words, for every $(m,s)\in M_1\x I$, we have  $\Psi_K^0(m,s)=\id_{M_2}$ while $\Psi^1_K$ is a map with the property that $\Psi_K^1(m,s)(h_2(f(m),s))=f(h_1(m,s))$, and $\Psi_K^{t}(m,s)$, $t\in [0,1]$, is a homotopy between these two maps that follows $K$. 
Note that the two conditions in the statement are compatible since $K(m,s,0)=h_2(f(m),s)$, while $K(m,0,t)=f(m)=h_2(f(m),0)$ and $K(m,1,t)=f\circ g\circ f(m)=h_2(f(m),1)$ for all $t$.

    \begin{proof}
      Let $\eps>0$ be smaller than the injectivity radius as before, and define $\eps_s=\min(s\eps,(1-s)\eps)$ so that $\eps_0=0=\eps_1$ and $0<\eps_s<\frac{\eps}{2}$ for all $s\in (0,1)$.     We define $\Psi_K^t(m,s)$ to be the identity outside a ball of radius $2t\eps_s$ around $h_2(f(m),s)$. Note in particular that such a map $\Psi_K$ will automatically satisfy condition (i) in the statement since $2t\eps_s=0$ when $s=0,1$ or $t=0$.
      
      Inside the ball, we define $\Psi^t_K$ as follows: for $V\in UTM_{h_2(f(m),s)}$ a unit tangent vector and $s\in (0,1)$, define
      $$\Psi_K^t(m,s)(\exp_{h_2(f(m),t)}(rV))=\left\{\begin{array}{ll} K(m,s,t-\frac{r}{\eps_s}) &  0\le r\le t\eps_s \\
                                         \exp_{h_2(f(m),s)}(2(r-t\eps_s)V)          &  t\eps_s\le r\le 2t\eps_s 
                                                \end{array}\right.$$
                                              (See also Figure~\ref{fig:vmt}.)   That is the map $\Psi_K^t(m,s)$ takes each ray in the ball of radius $t\eps_s$ around $h_2(f(m),s)$ to the same path following $K$ from $K(m,s,0)=h_2(f(m),s)$ (when $r=t\eps_s$) to $K(m,s,t)$ (when $r=0$) in the time interval $[0,t]$. In particular, it satisfies condition (ii). 
                                                 The points at distance $r=t\eps_s$ are taken to $h_2(f(m),s)$ and   
     the annulus of points at distance $t\eps_s$ to $2t\eps_s$ is then used to interpolate between this map shrinking the middle sphere to the center of the ball and the identity outside the ball.

                                              Continuity at $s=0,1$ follows from the fact  that $K(m,s,-)$  is a continuous family of paths starting at the constant path at $h_2(f(m),0)=f(m)$ when $s=0$ and ending at the constant path at $h_2(f(m),1)=f\circ g\circ f(m)$ when $s=1$. 
                                                   \end{proof}
        \begin{figure}[ht]
                    \begin{lpic}{Disk2K(0.4,0.4)}
                      \lbl[l]{318,50;$f\!\circ\! h_1(m,s)$}
                      \lbl[bl]{49,47;$h_2(f(m),s)$}
                      \lbl[b]{203,48;$h_2(f(m),s)$}
                       \lbl[b]{125,49;$\Psi_K^{-}(m,s)$}
                        \lbl[l]{280,10;$K(m,s,-)$}
                      \end{lpic}
                    \caption{Deformation map $\Psi^{-}_K(m,t):M_2\to M_2$}\label{fig:vmt}
                    \end{figure}

\subsection{Construction of the main homotopy in the case of mapping spaces}\label{sec:invert}

Let  $\E_{N_2}^0$ and $\E_{1,2}^0$ denote the subspaces of constant loops in $\E_{N_2}$ and $\E_{1,2}$.  Note that
$$\E_{N_2}^0\cong N_2\cap (M_{1}\x D^k)=\{(m,n,x)\in M_1\x M_2\x D^k\ |\ f(m)=n \ \textrm{and}\ (n,x)\in N(f,e)\}$$
identifies with the problematic space appearing in Diagram~\ref{equ:intsp}. 
The choice of homotopy inverse to $\nu_2$ given at the start of Section~\ref{sec:K} naturally lifts to define a map 
$j^{-1}:\E_{1,2}\x D^k\rar \E_{N_2}$ by setting
$$j^{-1}(m,\ga,x)\ =\ \big(g\circ f(m),f\circ g\circ \ga,e\circ g(\ga(s_1))\big).$$
This map has the property that it takes constant loops to constant loops. 
We will see below that $j^{-1}$ is indeed a homotopy inverse to the map $j:\E_{N_2}\to \E_{1,2}\x D^k$, but the homotopies witnessing that will in general not preserve constant loops, as is to be expected by the above comments, since $\E_{N_2}^0$ is in general not homotopic to $\E_{1,2}^0\x D^k$.

\begin{prop}\label{prop:L12} Let $f:M_1\to M_2$ be a homotopy equivalence and 
 $(\E_2=\maps(S,M_2),\R_2)$ be a mapping space associated to a triple $(S,\{s_0,s_1\},P)$ as above.
Consider  the associated homotopy equivalence $j: \big(\E_{N_2},\R_{N_2}|_{M_1}\big) \inc \big(\E_{1,2}\x D^k,(\R_{1,2}\x D^k)|_{M_1}\big)$ as in Proposition~\ref{prop:j}. 

Let $g:M_2\to M_1$ be a homotopy inverse for $f$. Then  map $j^{-1}: \E_{1,2}\x D^k\rar \E_{N_2}$  defined by $$j^{-1}(m,\ga,x)=(g\circ f(m),f\circ g\circ \ga,e\circ g(\ga(s_1)))$$
 is a relative homotopy inverse for $j$ that  preserves all constant and partially constant maps: 
  \begin{enumerate}
  \item[(i)]  $j^{-1}(\E_{1,2}^0\x D^k)\subset \E_{N_2}^0$ and $j^{-1}(\R_{1,2}\x D^k)\subset \R_{N_2}$. 
   \end{enumerate}
Let $h_1:M_1\x I\to M_1$ be a homotopy from the identity to $g\circ f$. 
There are homotopies $L_{1,2}:\E_{1,2}\x D^k\x I\to \E_{1,2}\x D^k$ from the identity to $j\circ j^{-1}$ and $L_{N_2}:\E_{N_2}\x I\to \E_{N_2}$ from the identity to $j^{-1}\circ j$ 
   with the following properties
  \begin{enumerate}
  \item[(ii)] $\e_{s_0}(L_{1,2}(m,\ga,x,t))=\e_{s_0}(L_{N_2}(m,\ga,x,t))=h_1(m,t)$ (up to a reparametrization);
\item[(iii)]  $L_{1,2}$ preserves the constant loops $\E_{1,2}^0\x D^k$ and half-contant loops $\R_{1,2}\x D^k$. 
\item[(iv)] $L_{N_2}$ preserves $\E^0_{N_2}|_{M_1}$ and $\R_{N_2}|_{M_1}$; 
\item[(v)] $L_{N_2}$ takes constant maps $\E_{N_2}^0$ to the subspace of ``thin maps''
$$\E^{thin}_{N_2}:=\{(m,\ga,x)\in \E_{N_2}\ |\ \ga:S\xrightarrow{u} S\cup \{s_0,s_1\}\x I\surj \!\!\bigvee_{\{s_0,s_1\}}\!\! I\to  M_2\}\subset M_1\x \mathcal{P}M_2\x D^k.$$
  \end{enumerate}
\end{prop}

\begin{proof} 
By construction, the map $j^{-1}$ has image inside $\E_{N_2}$, landing
in the subspace of maps evaluating to the zero-section of $N_2$. The map
$j^{-1}$ preserves constant loops and partially constant loops since it only changes the loops by post-composing with the map $f\circ g$,
so property (i) in the statement holds.
It also preserves $(\E_{1,2}\x D^k)|_{M_1}=\E_{N_2}|_{M_1}$ since if $(m,\ga,x)$ satisfies that $\ga(s_1)=f(m)$ and $x=e(m)$, then 
$(g\circ f(m),f\circ g\circ \ga,e\circ g(\ga(s_1)))$ also satisfies that $f\circ g\circ \ga(s_1)=f(g\circ f(m))$ and $e\circ g(\ga(s_1))=e(g\circ f(m))$. In particular it preserves $(\R_{1,2}\x D^k)|_{M_1}=\R_{N_2}|_{M_1}$ as required.  

\smallskip

We will check that $j^{-1}$ is a relative homotopy inverse by writing down explicit homotopies $L_{N_2}:\E_{N_2}\x I\to \E_{N_2}$ and $L_{1,2}:\E_{1,2}\x D^k\x I\to \E_{1,2}\x D^k$ between the identity and $j^{-1}\circ j$ (resp. $j\circ j^{-1}$). 
We start by defining the homotopy $L_{1,2}$, which is easier to construct. Let $(m,\ga,x)\in \E_{1,2}\x D^k$. Then $$j\circ j^{-1}(m,\ga,x)=(g\circ f(m),f\circ g\circ \ga,e\circ g(\ga(s_1))).$$
To construct $L_{1,2}$ (see Figure~\ref{Htpy1} for an illustration), we use the homotopy $h_1$ on the evaluation at $s_0$, and $h_2$ on the map component,  making use of the deformation $\Psi_K^1$ of Lemma~\ref{lem:Kdef} to ensure compatibility between these two contradictory definitions: set
$$L_{1,2}(m,\ga,x,t)=\big(h_1(m,t),\Psi_K^1(m,t)\big(h_2(\ga,t)\big),\ell(x,t)\big)$$
for  $\ell(x,-)$  a path in $D^k$ between $x$ and $e\circ g(\ga(s_1))$, continuous in $x$ and so that $\ell(e(m),-)=e\circ h_1(m,t)$ when $\ga(s_1)=f(m)$.

Note first that $(h_1(m,t),\Psi^1_K(h_2(\ga,t)))$ is indeed a well-defined element of $\E_{1,2}$ since $h_2(\ga(s_0),t)=h_2(f(m),t)$ and
$\Psi^1_K(m,t)(h_2(f(m),t))=f(h_1(m,t))$ as required. Recall from Lemma~\ref{lem:Kdef} that $\Psi^1_K(m,0)=\id=\Psi^1_K(m,1)$. Hence $L_{1,2}(m,\ga,x,0)=(m,\ga,x)$ and
$L_{1,2}(m,\ga,x,1)=(g\circ f(m),f\circ g\circ \ga,e\circ g\circ \ga(s_1))$, so it is indeed a homotopy from the identity to the composition $j\circ j^{-1}$. 

By definition, $ev_0\circ L_{1,2}(m,\ga,x,t)=h_1(m,t)$. 
Also, the map $L_{1,2}$ respects constant loops and half-constant loops, and preserves $(\E_{1,2}\x D^k)|_{M_1}$ since for $(m,\ga,e(m),t)$ with $\ga(s_0)=f(m)=\ga(s_1)$, we have that
$f(h_1(m,t))=\Psi^1_K(m,t)(h_2(\ga(s_i),t))$ for $i=0,1$ and $\ell(e(m),t)=e\circ h_1(m,t)$ by definition. In particular it preserves  $(\R_{1,2}\x D^k)|_{M_1}$ and $(\E^0_{1,2}\x D^k)|_{M_1}$.
Hence it is a homotopy from the identity to $j\circ j^{-1}$, and satisfies properties (ii) and (iii) in the statement.

\begin{figure}[!htb]
    \centering
    \begin{minipage}{.5\textwidth}
\begin{lpic}{Htpy1newbis(0.3,0.3)}
\lbl[b]{59,164;$\ga$}
\lbl[bl]{210,62;$f\!\circ\! g\!\circ\! \ga$}
\lbl[bl]{210,122;$f\!\circ\! g\!\circ\! \ga(s_1)$}
\lbl[b]{105,120;$h_2(\ga,t)$}
\lbl[bl]{78,60;$\Psi^1_K(m,t)$}
\lbl[br]{49,119;$f(m)\!=\!\ga(s_0)$}
\lbl[b]{125,178;$\ga(s_1)$}
\lbl[bl]{135,126;$h_2(\ga(s_1),t)$}
\lbl[tl]{142,50;$f\!\circ\! g\!\circ\! \ga(s_0)$}
\lbl[tl]{142,37;$=f\!\circ\! g\!\circ\! f(m)$}
\lbl[br]{57,49;$f\!\circ\! h_1(m,t)$}
\end{lpic}
        \caption{ The map component  of $L_{1,2}(m,\gamma,-)$.}
        \label{Htpy1}
    \end{minipage}%
    \begin{minipage}{0.5\textwidth}
\begin{lpic}{Htpy2newbis(0.3,0.3)}
\lbl[b]{59,164;$\ga$}
\lbl[bl]{210,62;$f\!\circ\! g\!\circ\! \ga$}
\lbl[b]{105,120;$h_2(\ga,t)$}
\lbl[bl]{78,60;$\Psi^1_K(m,t)$}
\lbl[br]{48,120;$f(m)$}
\lbl[b]{121,180;$f(m')$}
\lbl[b]{115,160;$\ga(s_1)$}
\lbl[b]{170,164;$K(m')$}
\lbl[br]{57,49;$f\!\circ\! h_1(m,t)$}
\lbl[bl]{204,167;$f\!\circ\! h_1(m',t)$}
\end{lpic}
        \caption{ The map component  of $L_{N_2}(m,\gamma,x,-)$.}
        \label{Htpy2}
    \end{minipage}
  \end{figure}

  \medskip

  We are left to define $L_{N_2}:\E_{N_2}\x I\to \E_{N_2}$, a homotopy between the identity and $j^{-1}\circ j$.  We have again that  $j^{-1}\circ j(m,\ga,x)=(g\circ f(m),f\circ g(\ga),e(g\circ \ga(s_1)))$.
  We will construct $L_{N_2}$ as a modification of $L_{1,2}$ that ensures that the evaluation at $s_1$ now stays within $N(f,e)$ at all times. We cannot use the deformation $\Psi^1_K$ to achieve this, like we did to control the evaluation at $s_0$, as the deformations for $s_0$ and $s_1$ could attempt to pull the same point in different directions. We will instead control the evaluation at $s_1$ by adding sticks:
  Define first a map $L_{N_2}^\uparrow:\E_{N_2}\x I\to \E_{N_2}$ given by
  $$L^{\uparrow}_{N_2}(m,\ga,x,t)=\Big(h_1(m,t),L^{\uparrow}_{N_2}(m,\ga,x,t)_{\E_2}, e\circ h_1(m',t)\Big)$$
  with $\E_2$-component (illustrated in Figure~\ref{Htpy2}) given by 
  $$L^{\uparrow}_{N_2}(m,\ga,x,t)_{\E_2}: S\to S\cup \{s_1\}\x I \xrightarrow{L_{1,2}\cup \delta} M_2$$
  attaching a path $\delta$ to the mapping component of $L_{1,2}$, defined as follows. 
  By assumption, $(\ga(s_1),x)\in N(f,e)$ can be written as $\exp_{f(m')}(W,X)$ for some $m'\in M_1$, and $(W,X)\in TM_2\oplus \RR^k$. The path $\delta(m,\ga,x,t)$ is a concatenation $\delta=\delta_1*K(m',t,-)$ with 
  $$\Psi^1_K(m,t)(h_2(\ga(s_1),t)) \ \ \stackrel{\delta_1}{\leadsto}\ \  h_2(f(m'),t) \stackrel{K(m',t,-)}{\leadsto} f\circ h_1(m',t)$$ where the first path is defined by  
\begin{align*}
  \delta_1(r)& =\Psi^{1-r}_K(m,t)(h_2(\exp_{f(m')}(1-r)W),t).
\end{align*}

\smallskip

\noindent
{\em Modification 1:} The map $L_{N_2}^\uparrow$ as defined is not quite a homotopy between the identity and $j^{-1}\circ j$ since at $t=0$ it yields $(m,\ga*_{s_1}\delta_W,e(m'))$,
and at $t=1$ it yields
$(g\circ f(m),f\circ g(\ga*_{s_1}\delta_W),e(g\circ f(m')))$ where $\delta_W(r)=\exp_{f(m')}(1-r)W$ is non-trivial if $W\neq 0$. We thus need to concatenate $L_{N_2}^\uparrow$ with a homotopy at the start growing the path $\delta_W$, with its associated disc-coordinate, and the end a homotopy retracting $(f\circ g(\delta_W), e\circ g(rW))$. We denote the resulting homotopy $\tilde L_{N_2}^\uparrow$. It is a homotopy between the identity and $j^{-1}\circ j$ and it satisfies property (ii) in the statement by construction, but it is not yet a homotopy that preserves $\R_{N_2}|_{M_1}$ and $\E_{N_2}^0|_{M_1}$.

\smallskip

\noindent
{\em Modification 2:} If $(\ga(s_1),x)=(f(m),e(m))$, then $W=0$ and the path $\delta_1$ has  $\delta_1(r)=\Psi_K^{1-r}(m,t)(h_2(f(m),t))$ $=K(m,t,1-r)$, with the second half of $\delta$  running exactly the same path in reverse.
So the homotopy does not preserve constant and partially constant loops but instead creates a path that goes back and forth along $K(m,t,-)$.
This is homotopic to the definition we want on $\E_{N_2}|_{M_1}$, which would be the same map but without these added thin loops, i.e.~the map $L_{1,2}$ instead. To glue the two maps together, we need to know that the pair $(\E_{N_2},\E_{N_2}|_{M_1})$ is an NDR pair. This can be deduced from the fact that the pair $(N_2,M_1)$ is an NDR pair: let $u:N_2\to I$ and $q:N_2\x I\to N_2$
be the maps defined in the proof of Lemma~\ref{lem:N2} to show that $(N_2,M_1)$ is an NDR-pair. Then define $\hat u=u\circ p: \E_{N_2}\to N_2 \to I$ and $\hat q:\E_{N_2}\x I\to \E_{N_2}$  by $\hat q(m,\ga,x,t)=(m,\ga',x')$ where $q(m,\ga(s_1),x)=(n',x'):=(\ga'(s_1),x')$ and $\ga'(s)=q(m,\ga(s),x)|_{M_2}$ if $(\ga(s),x)\in N(f,e)$, and $\ga'(s)=\ga(s)$ otherwise, which gives a continuous map $\ga'$ since $q$ only affects points close to the zero-section of $N(f,e)$.

The resulting modified $L_{N_2}$ now satisfies properties (ii) and (iv) in the statement. 

Lastly, we have that $L_{N_2}$ adds (at most) paths $\delta$ to otherwise constant loops, replacing them by thin loops, as claimed. 
   \end{proof}

\begin{rem}\label{rem:LN20}
The homotopy $L_{N_2}$ is constructed so that it takes ``diagonal constant loops", that is elements of $\La^{r|0}_{N_2}|_{M_1}$, to constant loops. To non-diagonal constant loops,  the homotopy associates instead a family of thin loops, namely the paths $\delta$ in the above proof. Let $(m,[f(m)],x)\in \La^{r|0}_{N_2}|_{U_1^c}$, with $(f(m),x)=\exp_{f(m')}(W,X)$ in $N(f,e)$. 
As elements of $M_1\x PM_2\x D^k$, this family of thin loops is explicitly given (up to a small deformation) as
           $$L_{N_2}(m,[f(m)],x,t) \approx \left\{\begin{array}{ll} \big(m,\delta_W[0,3t],e(m')\big) & 0\le t\le \sfrac{1}{3}\\
                                                                                \big(h_1(m,3t-1),\la_K,e(h_1(m',3t-1)\big) & \sfrac{1}{3}\le t\le \sfrac{2}{3}\\
                                \big(f\circ g(f(m)),f\circ g(\delta_W[0,3t-2]),e\circ g(\delta_W(3t-2)))\big) & \sfrac{2}{3}\le t\le 1 
                         \end{array}\right.$$
where $\delta_W(r)=\exp_{f(m')}(1-r)W$ is a path from $f(m)$ to the nearby point $f(m')$, and  $$\la_K=\overline{K(m,3t-1,-)}*h_2(\delta_W,3t-1)*K(m',3t-1,-)$$
is a path connecting $h_1(m,t)$ and $h_1(m',t)$.  
Indeed, the path $\delta$ is given as a concatenation $\delta=\delta_1*K(m',t,-)$ with $\delta_1$ obtained by gradually applying the deformation $\Psi_K(m,t)$ to $h_2(\delta_W),t)$. The above is obtained by instead applying $\Psi_K$ only to the startpoint $h_2(f(m),t)$ of the path $h_2(\delta_W),t)$, creating the path $K(m,t,-)$ (in reverse), and then concatenating with the undeformed path $h_2(\delta_W),t)$. 
\end{rem}

\section{The string coproduct and the proof of Theorems~\ref{thm:formula} and~\ref{thm:formula2}}\label{sec:finalproof}

We will now deduce the formula in Theorems~\ref{thm:formula} and~\ref{thm:formula2} from Theorem~\ref{thm:Hcommute0}, applied to the case of the string coproduct, and more generally to the case of the higher coproducts described in the introduction. To apply Theorem~\ref{thm:Hcommute0}, we first need to describe these operations as lifted relative intersection products. We start by recalling in Section~\ref{sec:trivial} how to obtain the string coproduct from a relative version of the trivial coproduct. We then generalize this description to the higher coproducts in Section~\ref{sec:higher}. In both cases, the operation starts by crossing with an interval $I$ or a cube $I^r$. We will see that the shape of the boundary $\del I^r$ will be crucial for the formulas of Theorems~\ref{thm:formula} and~\ref{thm:formula2}.
Sections~\ref{sec:al} and \ref{sec:LN2} study the interaction between the boundary and the maps $\al$ and $L_{N_2}$ appearing in Theorem~\ref{thm:Hcommute0}, and Section~\ref{sec:proof} will assemble the results obtained to prove Theorems~\ref{thm:formula} and~\ref{thm:formula2} modulo the geometric description of $\T_f$ in terms of the homotopy $h_1$ and the fake diagonal, that will be given in Section~\ref{sec:fake}.

\subsection{The string coproduct in terms of the trivial coproduct}\label{sec:trivial}

Let $\e=\e_{0,\frac{1}{2}}: \La M\to M\x M$ be the evaluation map defined by $\e(\ga)=(\ga(0),\ga(\frac{1}{2}))$. Then
$$\La M|_M=\{\ga\in \La M\ |\ \ga(\frac{1}{2})=\ga(0)\}$$
identifies with the ``figure eight space'', seen as the subspace of $\La M$ of loops $\ga$ having a self-intersection
$\ga(0)=\ga(\frac{1}{2})$. 
We let $\R\subset \La M|_M$ denote the subspace of half-constant loops:
  $$\R=\{\ga\in \La M\ |\ \ga(t)=\ga(0) \ \forall t\in [0,\frac{1}{2}] \ \textrm{or}\ \forall t\in [\frac{1}{2},1]\}.$$

Following \cite{GorHin} (see also  \cite[Thm 2.13]{HinWah0}, combining with Section 3.2 of that paper, or \cite[sec 2.5]{NRW}), we can compute the string coproduct  as the
composition 
\begin{multline}\label{eq:rel}
 H_*(\La M) \xrightarrow{\x I} H_{*+1}(\La M\x I,\La M\x \del I\cup M\x I) \xrightarrow{\;J\;} H_{*+1}(\La M,\R) 
 \xrightarrow{int_{U_M}}  H_{*+1-n}(\La M|_{U_M},\R)  \\
 \xrightarrow{\ r\ }  H_{*+1-n}(\La M|_M,\R)\xrightarrow{cut}  H_{*+1-n}(\La M\x \La M,M\x \La M\cup \La M\x M)
\end{multline}
where  $$J:\La M\x I\to \La M$$  is the reparametrizing map defined by $J(\ga,s)=\ga\circ \theta_{\frac{1}{2}\to s}$ for $ \theta_{\frac{1}{2}\to s}:[0,1]\to [0,1]$ the piecewise linear map that fixes $0$ and $1$ and takes $\frac{1}{2}$ to $s$, and 
$r: \La M|_{U_M}\to \La M|_M$  
is the retraction map deforming the loops so the almost self-intersection at time $\ha$ becomes an actual intersection as in \cite[Sec 2.4]{HinWah0}.
(For simplicity, we consider the coproduct here with source $H_*(\La M)$ and not $H_*(\La M,M)$ as is often considered.)

\smallskip

Any map $f:M_1\to M_2$ induces a map $\La f:\La M_1\to \La M_2$ that respects the constant loops and half-constant loops.
Also, the maps $\x I$, $J$ and $cut$ are all natural in maps of the form $\La f$. Hence the question of homotopy invariance of the coproduct reduces to the homotopy/non-homotopy invariance of the middle composition
$$int_M: H_*(\La M,\R) \xrightarrow{int_{U_M}}  H_{*-n}(\La M|_{U_M},\R)  \xrightarrow{r}  H_{*-n}(\La M|_M,\R)$$
in \eqref{eq:rel}, which is a relative version  of the {\em short trivial coproduct}
$$int_M : H_{*-n}(\La M) \rar  H_{*-n}(\La M|_M)\cong H_*(\La M\x_M\La M)$$
(denoted $\vee_{\frac{1}{2}}^{\F}$ in \cite{HinWah0}) that looks for self-intersections of the form $\ga(0)=\ga(\ha)$ and does not cut. This last operation has long been known to be essentially trivial (see \cite{Tam} and \cite[Lem 4.5]{HinWah0}). Proposition~\ref{prop:easynat} implies that it is homotopy invariant, a property that could also be checked directly in this case. What we are interested in here is how invariant the relative version of this map is. Given that  half-constant loops $\R$ define a fibration over $M$, and not over $M\x M$, we are in the situation of Assumption~\ref{ass:B}, and we know from Theorem~\ref{thm:Hcommute0} that there is an obstruction to invariance. Our goal here is to analyse this obstruction.  

\smallskip

The obstruction given in Theorem~\ref{thm:Hcommute0} takes the form of a sequence of maps applied to the boundary of the class considered in the source. In our case, starting from a cycle $A\in C_*(\La_1)$ in \eqref{eq:rel}, we create the product $A\x I\in C_{*+1}(\La_1\x I, \La_1\x \del I)$ that has boundary $A\x \del I$. The first step in our analysis of the obstruction is to rewrite this boundary term, after applying the reparametrization map $J$, in terms of the Chas-Sullivan product:

\begin{lem}\label{lem:dA}
Suppose $A\in C_*(\La_1)$ is a non-relative cylce. Then $$\del (J(A\x I))=[M_1]\wedge A-A\wedge [M_1]\ \ \in H_*(\R_1)$$ is the left and right half-constant loops versions of $A$. 
\end{lem}

\begin{proof}
  Given that $J$ is a chain map, $\del(J(A\x I))=J(A\x \del I)$. Now $J(\ga,0)=\ga(0)*\ga$ reparametrizes the loop to become constant on $[0,\ha]$ and similarly $J(\ga,1)=\ga*\ga(1)$. This is precisely the effect on chain of taking a Chas-Sullivan product with the fundamental class on the left and right respectively. 
\end{proof}

We want to emphasize that the chain $A\x I$ in the above lemma is never a cycle in $C_*(\La_1\x I)$, as $\del(J(A\x I)))$ is always non-trivial, and indeed can be computed as $[M_1]\wedge A-A\wedge [M_1]$. Now this last cycle is always a boundary in $C_*(\La_1)$, corresponding to the fact that $[M_1]$ is a unit for the string product. It is however not a boundary in general  in $C_*(\R_1)$, where we will be considering it. 

\medskip

From Theorem~\ref{thm:Hcommute0}, we have that the failure of invariance for such a cycle $A$ is computed by 
\begin{align}\label{equ:1}
  cut\circ \hat h\circ int_{M_1}\circ  L_{N_2}((\al([M_1]\wedge A-A\wedge [M_1]))\x I)\big) 
  \end{align}
   To prove Theorem~\ref{thm:formula}, we will ``pull out'' $\wedge A$ and $A\wedge$ in the above expression.   We start by introducing the more general set-up of higher mapping spaces, in which we will write the rest of the proof.

   \subsection{Higher coproducts}\label{sec:higher}

Let $\La^rM=\maps(S^r,M)=\maps(I^r/\del I^r,M)$ denote the space of {\em $r$-loops} in $M$, and recall from the introduction the higher coproduct
$$\vee^r: H_*(\La^r M)\xrightarrow{int_{M}\circ (\x I^r)} H_{*+r-n}((\La^r M\x I^r)|_M,\La^r\x \del I^r)\xrightarrow{\reread} H_{*+r-n}(\maps(T^{r},M),\R_{T^r})$$
that looks for self-intersections of the form $\ga({\bf 0})=\ga({\bf t})$ for some ${\bf t}=(t_1,\dots,t_r)\in I^r$, and then ``rereads'' the loops by precomposing  loops $\ga: (I^r/\del I^r)/_{{\bf 0}\sim {\bf t}} \rar M$ with the map
$$T^r=S^1\x S^{r-1}\cong S^1\x \del I^r \rar (I^r/\del I^r)/_{{\bf 0}\sim {\bf c}}\xrightarrow{(\theta_{\frac{1}{2}\to t_j})_{j=1}^r} (I^r/\del I^r)/_{{\bf 0}\sim {\bf t}}$$
where the first map takes $S^1\x \{{\bf t}\}$ to the line from ${\bf t}$ to ${\bf c}=(\ha,\dots,\ha)$, which is a circle in the target, and the map $ \theta_{\frac{1}{2}\to t_j}:[0,1]\to [0,1]$ is the same reparametrization map as in the previous section, applied one coordinate at a time. 
The subspace $\R_{T^r}$ in the target of the higher coprodruct is the space of half-constant maps
$$\R_{T^r}=\bigcup_{j=1}^r\{\ga\in \maps(T^{r},M) \ |\ \ga(s,{\bf s'})=\ga(0,{\bf 0}) \ \forall {\bf s'}\in \del_j^-I^{r}\textrm{ or } \forall {\bf s'}\in \del_j^+I^{r}\}$$
where $\del_j^-I^{r}=\del I^{r}\cap (I^{j-1}\x [0,\ha]\x I^{r-j}) $ and $\del_j^-I^{r}=\del I^{r}\cap (I^{j-1}\x [\ha,1]\x I^{r-j})$ are the ``left'' and ``right'' hemispheres of $\del I^r\cong S^{r-1}$ in the $j$th direction. Indeed, just like in the case of single loops in the previous section, the reparametrization $\theta_{\frac{1}{2}\to t_j}$ has the effect of making the map half-constant in the $j$th direction when $t_j=0$ or $1$. 

\medskip

This operation is a direct generalization of the string coproduct considered above, and just like the string coproduct, it can be computed using a relative version of a ``trivial coproduct''.
Let
$$\R_{\La^r}=\bigcup_{j=1}^r\{\ga\in \La^rM \ |\ \ga({\bf s})=\ga({\bf 0}) \ \forall {\bf s}\in I^{j-1}\x [0,\ha]\x I^{r-j}\textrm{ or }\forall {\bf s}\in I^{j-1}\x [\ha,1]\x I^{r-j}\}$$
be the subspace of half-constant loops. 
We can compute the higher  string coproduct  as the
composition
\begin{multline}
 H_*(\La^rM) \xrightarrow{\x I^r} H_{*+r}(\La M^r\x I^r,\La M\x \del I^r) \xrightarrow{\;J^r\;} H_{*+r}(\La^r M,\R_{\La^r}) 
 \xrightarrow{int_{U_M}} \\
 H_{*+r-n}(\La^r M|_{U_M},\R_{\La^r})  \xrightarrow{r}  H_{*+r-n}(\La^r M|_M,\R_{\La^r})\xrightarrow{\reread}  H_{*+r-n}(\maps(T^{r},M),\R_{T^r})
\end{multline}
where  $$J^r:\La^r M\x I^r\to \La^r M$$  is the reparametrizing map defined by $J^r(\ga,{\bf t})=\ga\circ (\theta_{\frac{1}{2}\to t_j})_{j=1}^r$, 
and  $r: \La ^r M|_{U_M}\to \La^r M|_M$
is the retraction map deforming the $r$-loops so the almost self-intersection at time ${\bf c}=(\ha,\dots,\ha)$ becomes an actual intersection, just  as in \cite[Sec 2.4]{HinWah0}.
 The reread map is now just 
 precomposing with the map $S^1\x S^{r-1}\to (I^r/\del I^r)/_{{\bf 0}\sim {\bf c}}$ described above, as we have already reparametrized the loops.

\medskip

To give an analogue of Lemma~\ref{lem:dA} in higher dimensions, we need an appropriate higher version of the string product that will allow to describe boundary classes of the form $J(A\x \del I^r)$.
This will take the form of a standard intersection product, followed by a concatenation map {\em along $\del I^r$}, done using the action of the little $r$-disc operad $\C_r$ on $r$-loops sharing the same basepoint. In the case $r=1$, this operation will recover the two terms $[M_1]\wedge A-A\wedge [M_1]$ in one go.

\smallskip

Let $\C_r$ denote the little cube operad in dimension $r$, with $b$ is a chain representative of the generator of $H_{r-1}(\C_r(2))$. We denote
$$[-,-]: C_*(\La^r M\x_M\La^r M)\xrightarrow{-\x b} C_{*+r-1}(\La^r M\x_M\La^r M\x \C_r(2))\rar C_{*+r-1}(\La^rM)$$
the associated Browder (or Gerstenhaber) bracket, defined just as for based loop spaces, where we note that the standard little cube action on based loops also makes sense for free loops that share the same basepoint.
Now define the higher product
\begin{equation}\label{eq:wedger}
  \wedge_{\del I^r}: C_*(\La^r M\x \La^rM)\xrightarrow{int_M}  C_{*-n}(\La^r M\x_M\La^r M)\xrightarrow{[-,-]} C_{*+r-1-n}(\La^rM).
\end{equation}
We will consider the restriction of this operation to $M\x \La^rM$, considered as a map with value in half-constant loops:
\begin{equation*}
  \wedge_{\del I^r}: C_*(M\x \La^rM)\xrightarrow{int_M}  C_{*-n}(M\x_M\La^r M)\xrightarrow{[-,-]} C_{*+r-1-n}(\R_{\La^r}).
\end{equation*}
where $b:\del I^r\to \C_r(2)$ is a chosen representative the generator of $H_{r-1}(\C_r(2))$, given in Figure~\ref{fig:Dr} (right figure), with the property that the second cube lies inside a single hemisphere, so that the value of the bracket on classes coming from $M\x \La^rM$ is indeed within half-constant loops.

  \begin{figure}
\begin{lpic}{D2rdouble2(0.4,0.4)}
  \lbl[b]{27,57;$1$}
  \lbl[b]{62,35;$2$}
  \lbl[b]{41,35;${\bf c}$}
  \lbl[b]{0,66;${\bf t}$}
   \lbl[b]{154,46;$1$}
 \lbl[b]{175,35;$2$}
 \lbl[b]{150,35;${\bf c}$}
 \lbl[b]{107,66;${\bf t}$}
\end{lpic} \caption{Two homotopic choices of homology generator in $C_{r-1}(\C_r(2))$ parametrized by ${\bf t}\in \del I^r$}\label{fig:Dr}
\end{figure}

\begin{lem}\label{lem:dAr}
Suppose $A\in C_*(\La^rM)$. Then $\del (J^r(A\x I^r))= [M]\wedge_{\del I^r} A\in H_{*+r-1}(\R_{\La^r})$. 
  \end{lem}

  \begin{proof}
    Let $b':\del I^r\to D_r(2)$ be a choice of representative for $[-,-]$ given in Figure~\ref{fig:Dr} (left figure). 
The second cube in $b'({\bf t})$ is chosen to be the cube with interior $\theta_{\bf t}^{-1}(I^r\minus \del I^r)$. 
Then for any loop $\ga\in \La^rM_1$ and any  ${\bf t}\in \del I^r$, the loop $J^r(\ga,{\bf t})$ is a linear reparametrization of $b'({\bf t})(\ga(0),\ga)$. Interpolating between the two parametrization shows that they represent the same homology class. Now we have that
$b'({\bf t})(\ga(0),\ga)\simeq b({\bf t})(\ga(0),\ga)\in \R_{\La^r}$. 
    The result follows from the fact that $int_M([M]\x A)=(\e_{0}(A)\x A)$, computed as a transverse intersection.  
  \end{proof}

  The reason we introduced two homotopic generators of $H_{r-1}(\C_r(2))$ in Figure~\ref{fig:Dr} is that the left generator ($b'$) gives essentially a direct description of the boundary $\del (J^r(A\x I^r))$, while the second generator ($b$) has the additional property that the center of the cube ${\bf c}$ coincides with the center of the first cube. While this last feature does not make any difference here, it will become relevant in the next section when we define versions of this operation involving spaces like $\La_{N_2}$. 

\begin{rem}
  Just like in the case $r=1$, the class $\del (J^r(A\x I^r))$ would be trivial if considered in $H_{*+r-1}(\La^rM)$, fillable by a corresponding $I^r$-parametrized class, but it is in general not trivial in $H_{*+r-1}(\R_{\La^rM})$. It is a crucial for the non-homotopy invariance of these higher coproducts
  that this boundary class is non-trivial in $H_{*+r-1}(\R_{\La^rM})$.
One could for example extend the definition of the $r$-higher coproduct to $s$-loops with $s>r$, and in this case the corresponding boundary class would be trivial in half-constant $s$-loops, and the obstruction would vanish, so the corresponding operation would be homotopy invariant.  
\end{rem}
    
\subsection{Commuting higher products with the map $\al$}\label{sec:al}

In what follows, we write $$\La_i^r=\La^r M_i$$ for the $r$-loop space of $M_i$, for $i=1,2$, and $\R_i$ for its subspaces half-constant loops, and use the notations $\La^r_{N_2},\La^r_{1,2}$, etc.~for the pulled-back fibrations from $\La^r_2$ and $\R_2$ using the same notational convention as in Section~\ref{sec:inv1}. 
We write $\La_i^{r|0}$ for the subspace of constant loops in the $r$-loop space $\La^r_i$, and likewise for $\La^{r|0}_{N_2}$ etc.

Recall from Section~\ref{sec:assB} the map
$$\al=[\e^*\tau_e\cap]\circ [\x D^k]\circ j: C_*(\La^r_{N_2},\R_{N_2})\to C_*(\La^r_{N_2},\R_{N_2}).$$
For simpicity, we will also denote by $\al$ the composite map 
$$C_*(\La^r_1,\R_1) \xrightarrow{F_{N_2}} C_*(\La^r_{N_2},\R_{N_2})\xrightarrow{\ \al\ } C_*(\La^r_{N_2},\R_{N_2}).$$
This map does not affect the loop component, and thus respects constant loops and half-constant loops. 
In particular, $\al[M_1]$ can be considered in $C_n(\La_{N_2}^{r|0})$. 

The goal of the present section is to show the following result:

\begin{prop}\label{prop:step1} 
  For any $A\in H_*(\La^r_1)$, 
  $$\al\big([M_1]\wedge^1_{\del I^r}A\big)=\al [M_1]\wedge^{N_2}_{\del I^r}A\ \in H_*(\R_{N_2}), \hspace{2cm} (*)$$
  where
\begin{equation*}
  \xymatrix@R=1pc{\wedge^1_{\del I^r}: H_*(\La_1^{r|0}\x \La^r_1) \ar[r]^-{int_{M_{1}}}&  H_{*-n}(\La_1^{r|0}\x_{M_1} \La^r_1)   \ar[r]^-{[-,-]} & H_{*+r-1-n}(\R_1)\\
 \wedge^{N_2}_{\del I^r}:   H_*(\La_{N_2}^{r|0}\x \La^r_1) \ar[r]^-{int_{M_{1}}} &  H_{*-n}(\La_{N_2}^{r|0}\x_{M_1} \La^r_1)  \ar[r]^-{[-,-]} & H_{*+r-1-n}(\R_{N_2})}
\end{equation*}
are higher products as in~\eqref{eq:wedger}, 
where the bracket in the second case is defined by setting $b_{\bf t}((m,\ga,x),\la):=(m,b_{\bf t}(\ga,f(\la)),x)$. 
   \end{prop}

   Note that the definition of the bracket $[-,-]: H_*(\La_{N_2}^{r|0}\x_{M_1} \La^r_1)\to H_{*+r-1-n}(\R_{N_2})$ makes sense with the chosen generator $b$ (right picture in Figure~\ref{fig:Dr}) because
   $\e_{\bf c}\big(b_{\bf t}((m,\ga,x),\la)\big)=\e_{\bf c}(\ga)$. This gives that if $(m,\ga,x)\in \La^r_{N_2}$, so is $(m, b_{\bf t}((m,\ga,x),\la),x)$. 

\begin{proof}
The sequence of maps 
  $$\xymatrix{\La_1^{r|0}\x \La^r_1  \ar[dr]_{\e=(\e_0,\e_0)\  \ } \ar[r]^-{e\x \id}  & \La^{{r|0}}_{N_2}\x \La^r_1  \ar[d] \ar[r]^-{j\x \id} & (\La_{1,2}^{r|0}\x D^k)\x \La^r_1 \ar[dl]^{\ \ \e=(\e_0,\e_0)} \\
 &M_1\x M_1&&
}$$
over $M_1\x M_1$ gives compatible intersection products $int_{U_1}$. 
The first and last vertical maps are fibrations. The second vertical
map is not in general a fibration because of the factor $\La_{N_2}^{r|0}$,  but we can none-the-less construct a retraction map
$$r_1: \La^{{r|0}}_{N_2}\x_{U_1} \La^r_1\rar  \La^{{r|0}}_{N_2}\x_{M_1} \La^r_1 $$
simply by adding a stick to the loop in $\La^r_1$. (Adding a stick to an $r$-loop is formally done using the map $u:S\to S^{\uparrow}$ of Section~\ref{sec:invert} in the case $S=I^r/\del I^r$.)
So we in fact have compatible $int_{M_1}$ products (using Proposition~\ref{prop:easynat}), giving the commutativity of the first square in the following diagram: 
  $$\xymatrix{H_*(\La^{r|0}_1\x \La^r_1) \ar[d]_-{ (j\circ e) \x \id}\ar[r]^{int_{M_1}} &    H_{*-n}(\La^{r|0}_1\x_{M_1} \La^r_1) \ar[d]^-{}\ar[r]^-{[-,-]} & H_{*+r-1-n}(\R_{1}) \ar[d]^-{j\circ e}  \\
    H_*(\La^{{r|0}}_{1,2}\x \La^r_1)\ar[r]^{int_{M_1}}\ar[d]_-{\x D^k} \ar@{}[dr]|{\fbox{$(-1)^{nk}$}} &  \ar@{}[dr]|{\fbox{$(-1)^{(r-1)k}$}}  H_{*-n}(\La^{r|0}_{1,2}\x_{U_1} \La^r_1))\ar[r]^-{[-,-]}\ar[d]^-{\x D^k} & H_{*+r-1-n}(\R_{1,2}) \ar[d]^-{\x D^k} \\
    H_{*+k}(\La^{{r|0}}_{1,2}\x \La^r_1\x D^k,\del D^k)\ar[r]_-{int_{U_1}}  \ar[d]_-{\tau} &  H_{*+k-n}(\La^{{r|0}}_{1,2}\x_{U_1} \La^r_1\x D^k,\del D^k)\ar[r]_-{[-,-]}
    \ar[d]^-{\tau} &  H_{*+k+r-1-n}(\R_{1,2}\x D^k,\del D^k)   \ar[d]^-{\tau} \\
       H_{*+k}(\La^{{r|0}}_{1,2}\x D^k\x \La^r_1,\del D^k)\ar[r]^-{int_{U_1}}  \ar[d]_-{[\e^*\tau_e\cap]\x 1}\ar@{}[dr]|{\fbox{$(-1)^{nk}$}} & \ar@{}[dr]|{\fbox{$(-1)^{(r-1)k}$}}  H_{*+k-n}((\La^{{r|0}}_{1,2}\x D^k)\x_{U_1} \La_1,\del D^k)\ar[r]^-{[-,-]}
    \ar[d]^-{[\e^*\tau_e\cap]\x 1} &  H_{*+k+r-1-n}(\R_{1,2}\x D^k,\del D^k)   \ar[d]^-{[\e^*\tau_e\cap]} \\
 H_*(\La^{{r|0}}_{N_2}\x \La^r_1) \ar[r]_-{int_{U_1}} 
 & H_{*-n}(\La^{{r|0}}_{N_2}\x_{U_1}  \La^r_1) \ar[r]_-{[-,-]}  
    & H_{*+r-1-n}(\R_2)  
 }$$
 The third square in the left column commutes likewise by naturality of the intersection product over a fixed manifold (Proposition~\ref{prop:easynat}).  The second square commutes a cap and cross product, giving a sign $(-1)^{nk}$, and the fourth commutes two cap products, giving again a sign $(-1)^{nk}$. For the right column, recall that the map $[-,-]$ is the composition of a map $\x [b]=\x [S^{r-1}]$ with the topological action of the little cube operad $\C_r(2)$. The first map creates likewise signs $(-1)^{(r-1)k}$ when commuted with crossing with the disc $D^k$ in the second square, and when commuting with capping with the pull-back of $\tau_e$ in the last square. The other squares commute by naturality.

Now in total the signs cancel out and the diagram commutes. The left vertical composition takes a product class $[M_1]\x A$ to $\al[M_1]\x A$, while the right vertical composition applies $\al$ to the product. The top horizontal composition is the higher product $\wedge^1_{\del I^r}$ and the bottom one is  the higher product $\wedge^{N_2}_{\del I^r}$. The statement follows.  
\end{proof}

\subsection{Commuting higher products with the homotopy $L_{N_2}$}\label{sec:LN2}

Recall that $L_{N_2}$ is a homotopy between the identity and $j^{-1}\circ j$ that can be chosen to agree with the homotopy $h_1$ under the evaluation at $s_0={\bf 0}$ (Proposition~\ref{prop:L12}(ii)). 
Because $j$ and $j^{-1}$ preserve constant loops and half-constant loops, it  
induces maps
 $$L_{N_2}\colon (\La_{N_2}^{{r|0}}\x I,\La_{N_2}^{{r|0}}\x \del I) \rar (\La^r_{N_2},\La^{{r|0}}_{N_2}) \ \ \ \textrm{and}\ \ \ \ L_{N_2}\colon (\R_{N_2}\x I, \R_{N_2}\x \del I) \rar (\La^r_{N_2},\R_{N_2}) .$$
 We will use in the following result a deformation of the map $L_{N_2}$ restricted to constant loops, that keeps the evaluation at ${\bf 0}$ constant instead of following $h_1$: define
 $$\tilde L_{N_2}\colon (\La_{N_2}^{{r|0}}\x I, \La_{N_2}^{{r|0}}\x \del I)  \rar (\La^r_{N_2},\La^{{r|0}}_{N_2}) $$
 by 
 $$\tilde L_{N_2}(m,[f(m)],x,t)=\left\{\begin{array}{ll} \big(m,f(h_1(m,[0,2t]))*_{s_0}L_{N_2}(m,[f(m)],x,2t)_{\La_2}, L_{N_2}(m,[f(m)],x,2t)_{D^k}\big)  & 0\le t\le \frac{1}{2} \\
                                         \big(m,f(h_1(m,[0,2-2t])), e(h_1(m,2-2t))\big)                            & \frac{1}{2}\le t \le 1  \end{array}\right.$$
                                     That is, in the first part of the map, the homotopy adds a stick along $f\circ h_1$ to the loop component of $L_{N_2}$, where we use the map $I^r/\del I^r=S\to S\cup_{s_0}s_0\x I$ induced by the map $u$ of Figure~\ref{fig:u} to add the stick, and in the second part, it retracts the thin loop created along $h_1$.

\begin{lem}\label{lem:LN2t}
The map $\tilde L_{N_2}\colon (\La_{N_2}^{{r|0}}\x I, \La_{N_2}^{{r|0}}\x \del I)  \rar (\La^r_{N_2},\La^{{r|0}}_{N_2}) $ is homotopic to the restriction of $L_{N_2}$ to the pair $(\La_{N_2}^{{r|0}}\x I, \La_{N_2}^{{r|0}}\x \del I)$. 
\end{lem}
  
\begin{proof}
A homotopy $\tilde L_{N_2}\simeq_H L_{N_2}$ can be defined by retracting the added paths: for $0\le t\le \ha$, $H(-,s)$ takes the concatenation of the loop component of $L_{N_2}(-,t)$ with the path $f(h_1(m,[2st,2t]))$, with $M_1$--component given by $h_1(m,2st)$, and for $\frac{1}{2}\le t \le 1$, $H(-,s)$ has loop component $ f(h_1(m,[s,s+(1-s)(2-2t)]))$, with $M_1$-component $h_1(m,s)$. The evaluation at $s_1$ stays constant when $0\le t\le \ha$, and we keep the disc component constant as well. For $\ha\le t\le 1$, the evaluation at $s_1$ is $f(h_1(m,s+(1-s)(2-2t)))$ and we set the disc component to be $e(h_1(m,s+(1-s)(2-2t)))$. At time $1$ this yields a reparametrization of $L_{N_2}$. 
\end{proof}

 The goal of this section is to prove the following result:

 \begin{prop}\label{prop:step2}
   For any cycles $A\in C_*(\La^r_1)$ and $B\in C_*(\La_{N_2}^{r|0})$, we have that 
   $$L_{N_2}((B\wedge^{N_2}_{\del I^r} A)\times I)= \tilde L_{N_2}(B\x I)\wedge^{N_2}_{\del I^r} A \ \ \in H_*(\La^r_{N_2},\R_{N_2}).$$
   \end{prop}

   \begin{proof}
We need to check that the following diagram commutes: 
$${\xymatrix{ 
  C_{*}(\La_{N_2}^{{r|0}}\x \La^r_1\x I,\del I)\ar[d]_{[\e_{0,0}^*\tau_1\cap]} \ar[r]^{\tau} &  C_{*}(\La_{N_2}^{{r|0}}\x I\x \La^r_1,\del I) \ar[r]^-{\tilde L_{N_2}\x \id } \ar[d]_{[\e_{0,0}^*\tau_1\cap]} & C_{*}(\La^r_{N_2}\x \La^r_1, \La^{{r|0}}_{N_2}\x \La^r_1)
  \ar[d]^{[\e_{0,0}^*\tau_1\cap]}\\
   C_{*-n}(\La_{N_2}^{{r|0}}\x_{M_1}\! \La^r_1\x I,\del I) \ar[r]^{\tau} \ar[d]_{[\ ,\ ]}  & C_{*-n}((\La_{N_2}^{{r|0}}\x I) \x_{M_1}\! \La^r_1,\del I) \ar@/^/[r]^-{\tilde L_{N_2}\x\id} \ar@/_/[r]_-{L_{N_2}\x h_1}     
   & C_{*-n}(\La^r_{N_2}\x_{M_1}\! \La^r_1, \La^{{r|0}}_{N_2}\x_{M_1}\! \La^r_1) \ar[d]^{[\ ,\ ]} \\
  C_{*+r-1-n}(\R_{N_2}\x I,\del I) \ar[rr]^-{L_{N_2}} & & C_{*+r-1-n}(\La^r_{N_2},\R_{N_2}). 
}}$$
The two top squares commute by the naturality of the cap product, because the evaluation maps are compatible, where we use that $\tilde L_{N_2}$ preserves the evaluation at ${\bf 0}$ for the second square. 
The bigon commutes up to homotopy, using the homotopy $H\x h_1$, with $H$ the homotopy between $\tilde L_{N_2}$ and $L_{N_2}$ of  Lemma~\ref{lem:LN2t}, and $h_1$ considered as a homotopy from $\id=h_1(-,0)$ to $h_1(-,t)$. 
Finally, the bottom square commutes up to homotopy on the space level, as we detail now.

Recall that the map $[\ ,\ ]$ is the action of the chosen generator $b=\{b_{\bf t}\}_{{\bf t}\in \del I^r}$ of $H_{r-1}(\C_r(2))$ (Figure~\ref{fig:Dr}, right). 
For each $b_{\bf t}$,
starting from $(m,[f(m)],x,\la,t)$ in the top left corner of that square, going to the right and then down gives a triple $(m,\ga,x')$ where $\ga$ is the $r$-loop concatenated at $f(m)$ defined by
 $$\ga'=b_{\bf t}(\delta_{m,x,t},f(h_1(\la,t)))$$
where the thin loop, or path, $\delta_{m,x,t}=L_{N_2}(m,[f(m)],x,t)_{\La_2}\approx \overline{K(m,t,-)}*h_2(\delta_W,t)*K(m',t,-)$ (up to a retraction if $m$ is close to $m'$, following Modification 2 in the proof of Proposition~\ref{prop:L12}), see Remark~\ref{rem:LN20}. 
           The disc component $x'=e(h_1(m',2t))$ is that of $L_{N_2}$ if $ 0\le t\le \frac{1}{2}$, and otherwise $x'=e(h_1(m,2-2t))$. 
The resulting $r$-loop is depicted in Figure~\ref{fig:D2H} (left), using the same color-coding at Figure~\ref{Htpy2}. 

             Now going the other way around the square takes $(m,[f(m)],x,\la,t)$ to $(h_1(m,t),\ga'',x')$  with 
\begin{align*}\ga''=L_{N_2}(m,b_{\bf t}([f(m)],f\circ \la),x,t)_{\La_2}&\approx \overline{K(m,t,-)}*_{s_0}h_2\big(b_{\bf t}([f(m)],f\circ \la),t\big)*_{s_1}h_2(\delta_W,t)*K(m',t,-) \\
  &=\overline{K(m,t,-)}*_{s_0}b_{\bf t}(h_2(\delta_W,t)*K(m',t,-),h_2(f\circ \la,t))
\end{align*}
where the indicated homotopy comes from replacing the deformation $\Psi^1_K$ in the definition of $L_{N_2}$, used to connect $h_2(f(-),t)$ to $f(h_1(-),t)$ at $s_0$,  with adding instead a stick $K(m,t,-)$, and the last equality is obtained by commuting $h_2$ and the operation $b_{\bf t}$, and likewise passing the concatenation at $s_1$ inside $b_{\bf t}$. 
This $r$-loop is depicted in Figure~\ref{fig:D2H} (right). A homotopy between the left and right side of that figure can be obtained by sliding the image of $\la$ along $K(m,t,-)$ (the red part of the figure).
  \begin{figure}[ht]
    \begin{lpic}{D2Hc(0.46,0.46)}
      \lbl[b]{95,65;$f(h_1(\la,t))$}
      \lbl[b]{35,30;$f(h_1(m,t))$}
       \lbl[r]{151,5;{\small $f(h_1(m,t))$}}
       \lbl[b]{257,30;$h_2(f(\la),t)$}
          \lbl[b]{200,30;$h_2(f(m),t)$}
        \end{lpic} \caption{The $r$-loops $\ga'$ and $\ga''$. In both figures, the boundary is mapped to $f(h_1(m,t))$ while the center is mapped to $f(h_1(m',t))$. The map is constant on the gray zones, mapped to $f(h_1(m,t))$ in the first case and to $h_2(f(m),t)$ in the second. There two sets of marked radial lines, that are mapped to $K(m,t,-)$ and $K(m',t,-)$ in both cases. }\label{fig:D2H}
\end{figure}
When $t=0$ or $1$, $K$ is a constant path and hence this homotopy is constant.  
\end{proof}

\subsection{The obstruction class $\T_f$ and the proof of Theorems~\ref{thm:formula} and \ref{thm:formula2} (part 1)}\label{sec:proof}

We give here a proof of the two formulas stated in the introduction, though with the class $\T_f$ defined in terms of the maps $\al$ and $L_{N_2}$, postponing to Section~\ref{sec:fake} the definition in terms of homotopy data and the fake diagonal.

\smallskip

Recall from Proposition~\ref{prop:L12}(v) that the homotopy $L_{N_2}$ takes constant loops to ``thin loops'' $\E^{thin}_{N_2}$, whose loop component is a path in $M_2$, from the evaluation at $s_0={\bf 0}$ to that at $s_1={\bf c}$. The same then holds for the map $\tilde L_{N_2}$ defined in the previous section, as it is obtained from $L_{N_2}$ by concatenating an extra path at $s_0={\bf 0}$. Evaluating at $(s_0,s_1)=({\bf 0},{\bf c})$ defines a map $\E^{thin}_{N_2}\xrightarrow{\e_{{\bf 0},{\bf c}}} N_2\xrightarrow{\ p_2\ } M_1\x M_1$ with associated intersection product 
$$int_{M_1}: C_*(\E_{N_2}^{thin}) \to C_{*-n}(\La_2)$$
where we only remember the loop component. Note that this loop naturally lives in the actual loop space $\La_2$ rather than in $\La^r_2$. 

\begin{Def}\label{def:Tf}
Let $\al[M_1]\in H_n(\La^{r|0}_{N_2})$ as before, with $\tilde L_{N_2}(\al[M_1]\x I) \in H_{n+1}((\La^r_{N_2})^{thin})$. Define 
$$\T_f:=(-1)^{n} int_{M_1}\circ \tilde L_{N_2}(\al[M_1]\x I) \ \ \in H_{1}(\La_2,M_2),$$
\end{Def}
where $int_{M_1}$ is as above, and where we note that $\T_f$ is independent of the chosen $r\ge 1$. 

\medskip

We are now ready to prove the two formulas given in the introduction.

\begin{proof}[Proof of Theorems~\ref{thm:formula}  and \ref{thm:formula2} (part 1)] 
Let $A\in C_*(\La_1)$. 
Theorem~\ref{thm:Hcommute0} gives that the failure of invariance is computed by 
$$(\star)=int_{M_1}\circ  L_{N_2}\big(\al(\del_{N_2}(A))\x I\big)$$
followed by an appropriate ``reread'' map. Using Lemma~\ref{lem:dA}, and then Propositions~\ref{prop:step1} and~\ref{prop:step2}, we get 
$$(\star)= int_{M_1}\circ  L_{N_2}\big(\al([M_1]\wedge^1_{\del I^r} A)\x I\big) = int_{M_1}\Big(\tilde L_{N_2}\big(\al[M_1]\x I\big)\wedge^{N_2}_{\del {I^r}} A\Big).$$
 Here the intersection product $int_{M_1}$ is with respect to the evaluation at ${\bf 0}$ and ${\bf c}$, which can be computed solely in the first entry of the higher product $\wedge^{N_2}_{\del I^r}$ by our choice of generator $b$ of $H_{r-1}(\C_r(2))$ (Figure~\ref{fig:Dr}(right)).
Hence this evaluation map commutes with taking the bracket, and
$$(\star)=(-1)^{rn}\Big(int^{N_2}_{M_1}\Big(\tilde L_{N_2}\big(\al[M_1]\x I\big)\Big)\Big)\wedge^{N_2}_{\del {I^r}} A$$
with a sign coming from commuting  capping  with $\e_1^*\tau_{M_1}$ (to achieve $int_{M_1}$) with crossing with $[b]$ and  capping with $\e_2^*\tau_{M_1}$ (to achieve $\wedge_{\del I^r}$), that is $(-1)^{n^2+(r-1)n}=(-1)^{rn}$. Here we have emphasized in the notation that the intersection product $int_{M_1}^{N_2}$ remembers $N_2$-data, that is 
$$int^{N_2}_{M_1}\Big(\tilde L_{N_2}\big(\al[M_1]\x I\big)\Big)\in H_1(\La_{N_2}|_{M_1}, \La_{N_2}^0|_{M_1})=H_1(\La_{N_3},\La_{N_3}^0).$$ 

Next we want to replace the product $\wedge^{N_2}_{\del I^r}$ computed via an intersection in $M_1$, with the product $\wedge^2_{\del I^r}$ computed via an intersection in $M_2$ instead. For this we verify that we can apply Theorem~\ref{thm:fibinv0}, namely the invariance of intersection product under Assumption (A). Indeed,
there is a map of pairs of fibrations:
$$\xymatrix{(\La_{N_3}\x \La_1, \La^0_{N_3}\x \La_1) \ar[r]^-{F\x f}\ar[d]^{\e_{0,0}} & (\La_2\x \La_2, \La^0_2\x \La_2)\ar[d]^{\e_{0,0}}\\
  M_1\x M_1\ar[r]^-{f\x f} & M_2\x M_2}$$
where the top map is a forgetful map on the first component. (This is of the same form as the relative Chas-Sullivan product of Corollary~\ref{cor:CSrel}.) Hence the map $F\x f$ commutes with the corresponding intersection products. As the map is also compatible with the bracket, we have that
$$(\star)=(-1)^{rn}\Big(int^{N_2}_{M_1}\tilde L_{N_2}\big(\al[M_1]\x I\big)\Big)_{\La^r_2}\wedge^{2}_{\del {I^r}} f(A)=(-1)^{(r-1)n}[\T_f]_r \wedge^{2}_{\del {I^r}} f(A)$$
with $[\T_f]_r$ is the the class $\T_f$ of Definition~\ref{def:Tf} considered as a self-intersecting (thin) $r$-loop via the maps $\La_2\inc PM_2\cong \E^{thin}\to \La_2^r$. Note that when $r=1$, this composition corresponds to the anti-diagonal $\diag$, a thin loop when $r=1$ being a loop going back and forth between $s_0$ and $s_1$. 

When $r=1$, the reread map takes these self-intersecting loops and cuts them to create two loops. The self-intersections $\ga({\bf 0})=\ga({\bf c})$ arise in $\T_f$ and the cutting map cuts in half these loops going back and forth along the same path. 
This gives precisely the formula in Theorem \ref{thm:formula}, with one term for each component of $\del I$. 

When $r>1$, the reread map precomposes with the map $T^r=S^1\x S^{r-1}\to I^r/_{{\bf 0}\sim {\bf c}}$. Again, the self-intersection happens inside $\T_f$ and we can rewrite
$$\reread((-1)^{(r-1)n}[\T_f]_r \wedge^{2}_{\del {I^r}} f(A))\simeq (-1)^{(r-1)n}\reread([\T_f]_r) \wedge^{2}_{\del {I^r}} f(A)$$
with $\wedge^2_{\del I^r}$ now considered as defining an action of $r$-loops on tori. 
The formula in Theorem~\ref{thm:formula2}, with $T_f$ as in Definition~\ref{def:Tf}, then follows by noting that 
the class $\reread([\T_f]_r)$ identifies with the suspension $s^{r-1}T_1$, as defined in the introduction.   
        \end{proof}

\subsection{The obstruction class $\T_f$ and the fake diagonal, and part 2 of the proof}\label{sec:fake}

The obstruction class $\T_f$ of Definition~\ref{def:Tf} has two main ingredients: the class $\al[M_1]$, which we will see below is  a transverse version of the ``fake diagonal'' $\De_1^f$,
and the map $\tilde L_{N_2}$, which, using Proposition~\ref{prop:L12}, can be built from higher homotopy data $(f,g,h_1,h_2,K)$. The goal of this section is to give an explicit description of $T_f$, and use it to give conditions under which it vanishes.  
Two main reasons for the obstruction to vanish will come out of our analysis: one from  $\al[M_1]$  being close enough to the actual diagonal $\De_1$, and one from the homotopy data being appropriately small. 

\medskip

Let $$\De_1^f=\{(m,m')\in M_1\x M_1\ |\  f(m)=f(m')\}.$$ This subspace is essentially never a manifold, and in particular does not immediately define a homology class.
The issue comes from the fact that the map $f\x f:M_1\x M_1\to M_2\x M_2$ is not transverse to the diagonal of $M_2$. One can though always deform $f$ by a small homotopy to a map $\tilde f$ transverse to $f$. 
i.e.~such that the pair $(f,\tilde f)$ is transverse to the diagonal in $M_2\x M_2$.  
We fix such a transverse deformation $\tilde f\simeq_{h_0}f$ close enough to $f$ that the map $(\tilde f,e):M_1\to M_2\x D^k$ has image inside $N(f,e)$. Define
$$ \underline\De_1^f :=\{(m,m')\in M_1\x M_1\ |\ f(m)=\tilde f(m')\}\subset M_1\x M_1$$
This is now an $n$-dimensional submanifold of $M_1\x M_1$, that also comes with an embedding $\id\x (\tilde f,e):\underline\De^f_1\inc N_2=M_1\x N(f,e)$ by our choice of $\tilde f$. 
We will write $$\hat{\underline\De}_1^f =\{(m,[n],e(m'))\in
\La^0_{N_2}\ |\ m,m'\in M_1\ \textrm{with}\ f(m)\!=\!n=\!\tilde f(m')\}\subset \La_{N_2}^0$$ for the corresponding
subspace of constant loops, that will be most relevant for us. 
Note that the analogous lift $\hat \De_1^f$ of $\De_1^f$ to $\La_{N_2}^0$ is isomorphic to the space $N_2\cap M_1\x D^k$ in Diagram~\eqref{equ:intsp}.  

\begin{ex}\label{ex:wiggle}
    Let $f:I\to I$ be a smooth map that folds part of the interval, as illustrated in Figure~\ref{fig:wiggle}. (The map $f$ is homotopic to the identity.) We have that $\De_1^f\cong \De_1\cup_{S^0} S^1$. On the other hand $\underline \De_1^f\simeq \De_1$ is a submanifold of $M_1\x M_1$. 
    \begin{figure}[ht]
      \centering
\begin{lpic}{wiggle4(0.25,0.25)}
  \lbl[b]{65,15;$a$}
  \lbl[b]{86,15;$b$}
  \lbl[b]{120,15;$c$}
  \lbl[b]{140,15;$d$}
   \lbl[b]{98,80;$f$}
    \lbl[b]{287,15;$a$}
  \lbl[b]{310,15;$b$}
  \lbl[b]{343,15;$c$}
  \lbl[b]{364,15;$d$}
    \lbl[r]{260,45;$a$}
  \lbl[r]{260,78;$b$}
  \lbl[r]{260,103;$c$}
  \lbl[r]{260,129;$d$}
  \lbl[b]{323,127;$\De_1^f$}
     \lbl[b]{509,15;$a$}
  \lbl[b]{532,15;$b$}
  \lbl[b]{565,15;$c$}
  \lbl[b]{586,15;$d$}
    \lbl[r]{482,45;$a$}
  \lbl[r]{482,78;$b$}
  \lbl[r]{482,103;$c$}
  \lbl[r]{482,129;$d$}
     \lbl[b]{547,127;$\underline\De_1^f$}
 \end{lpic}
 \caption{Graph of the map $f:I\to I$ of Example~\ref{ex:wiggle} and associated fake diagonal $\De_1^f$ and deformation $\underline \De^f_1$}\label{fig:wiggle}
\end{figure}
\end{ex}

   \begin{rem}  
   While $\De_1^f$ has a priori no nice geometric property, one can ask whether it is always possible modify $f$ within its homotopy class so that $\De_1^f=\De_1\cup D^f$ with $D^f$ a submanifold of $M_1\x M_1$ that is transverse to $\De_1$.
   (This question was raised by Tom Goodwillie in a question session during the Andrew Ranicki memorial conference.)
If $f$ is {\em generic} in the sense that whenever $|m-m'|>\eps$, there exists neighborhoods $U_m$ of $m$ and $U_{m'}$ of $m'$ such that $f|_{U_m}$ is transverse to $f|_{U_{m'}}$, then $\De_1^f\setminus U_1$ is a manifold. As we will see in Corollary~\ref{cor:fake} below, this is the part of the fake diagonal that is in fact most relevant for our purpose. 
     \end{rem}

\begin{prop}\label{prop:alphaDe1} Let $\al$ be the map  defined in Section~\ref{sec:assB}, here considered as a map $\al:C_*(\La^0_{N_2})\to C_*(\La^0_{N_2})$. Then  $\al[M_1]= [\hat{\underline\De}_1^f]\in H_n(\La^0_{N_2})$, where  $[M_1]$ is considered as a class of constant loops in $C_n(\La^0_{N_2})$. 
  \end{prop}

  \begin{proof}
    Let $D:M_1\x D^k\to \La^0_1\x D^k\to \R_{1,2}\x D^k$ be defined by $D(m,x)=(m,[f(m)],x)$, where $[f(m)]$ denotes as before the constant loop at $f(m)$. By definition, 
    $$\al[M_1]=\bar p^*\bar\tau_e\cap D[M_1\x D^k]\in H_n(\La^0_{N_2}).$$
    To compute the cap product as an intersection, we would need transversality in $M_1\x M_2\x D^k$ of the leftmost and rightmost maps in 
    $$\xymatrix{M_1\x D^k \ar[r]^-D\ar[dr]_{(\id,f)\x \id} & \R_{1,2}\x D^k\ar[d]^\e &\\
    & M_1\x M_2\x D^k & \ar[l]^-{\id\x (f,e)} M_1\x M_1.}$$
  These maps are not transverse where $df$ does not have full rank. To achieve transversality, we replace the right map by the homotopic map (in $N_2$) given by $\id\x (\tilde f,e)$, with $\tilde f$ as above. 
  It follows that  $\al[M_1]$ can be computed as the points $D(M_1\x D^k)$ lying over the intersection  
    $$\e\circ D[M_1\x D^k]\cap (\id\x (\tilde f,e))[M_1\x M_1]=\underline \De_1^f.$$
These points of $D(M_1\x D^k)$ identify precisely with $\hat{\underline \De}_1^f$, which proves the result. 
  \end{proof}

  \begin{prop}\label{prop:TfK}
    As before, let $g$ be a homotopy inverse for $f$ and $h_1$ a homotopy between the identity and $g\circ f$. Let $\tilde f$ be a small transverse deformation of $f$ as above, and let likewise $\tilde h_1$ be a small deformation of $h_1$ transverse to it (away from $M_1\x \del I$).
        The class $\T_f\in H_1(\La_2,M_2)$ can be represented as the family of loops 
        $$\overline{K(m,t,-)}*h_2(\delta_2,t)*K(m',t,-)*f(\delta_1)$$
based at $h_1(m,t)$,   parametrized by the 1-dimensional manifold 
        \begin{align*}\mathcal{D}_f&=\{(m,m',t)\in \underline \De_1^f\ |\ h_1(m,t)=\tilde h_1(m',t)\}\\ 
        &=\{(m,m',t)\in M_1^2\x I\ |\ f(m)=\tilde f(m')\ \textrm{and}\ h_1(m,t)=\tilde h_1(m',t)\}
        \end{align*}
      where      $\delta_1=\overline{h_1(m,t)h_1(m',1)}$ and $\delta_2=\overline{\tilde f(m')f(m')}$  are geodesic paths in $M_1$ and $M_2$ respectively. 
\end{prop}

         \begin{proof}
           The homology class $\T_f=int_{M_1}\circ \tilde L_{N_2}(\al[M_1]\x I)=int_{M_1}\circ L_{N_2}(\underline{\hat\De}^f_1\x I)$ by Lemma~\ref{lem:LN2t} and Proposition~\ref{prop:alphaDe1}. 
Let $(f,g,h_1,h_2,K)$ be higher homotopy data for $f$. 
        From Remark~\ref{rem:LN20}, we get    an explicit representative of $L_{N_2}(\underline{\hat\De}^f_1\x I)$ as a class of (thin) loops parametrized by 
           $\underline\De_1^f\x I$. As thin loops, that is considered inside $M_1\x PM_2\x D^k$, these are given as (after a reparametrization and for $(m,m')\notin U_1$)
           $$L_{N_2}(m,m',t)\approx \left\{\begin{array}{ll} \big(m,\delta_W[0,3t],e(m')\big) & 0\le t\le \sfrac{1}{3}\\
                                                                                \big(h_1(m,3t-1),\la_K,e(h_1(m',3t-1)\big) & \sfrac{1}{3}\le t\le \sfrac{2}{3}\\
                                \big(f(g(f(m))),f(g(\delta_W[0,3t-2]),e(g(\delta_W(3t-2))))\big) & \sfrac{2}{3}\le t\le 1 
                         \end{array}\right.$$
where $\la_K=\overline{K(m,3t-1,-)}*h_2(\delta_W,3t-1)*\overline K(m',3t-1,-)$, and where $\delta_W$ is a short path from $f(m)=\tilde f(m')$ to $f(m')$ that can be replaced by the geodesic $\delta_2$.

       To obtain a representative of $\T_f$, we are left with applying the intersection product $int_{M_1}$ associated to the evaluation map
       $\e: \underline\De_1^f\x I\rar M_1\x M_1$ evaluating the loops at $s_0$ and $s_1$. Only the times $\sfrac{1}{3}\le t\le \sfrac{2}{3}$ can yield non-trivial loops. For such time coordinates, the evaluation at $s_0$ follows $h_1(m,t)$ while at $s_1$ it follows $h_1(m',t)$. After replacing $h_1$ by a transverse deformation $\tilde h_1$, we find the formula given in the statement. 
      \end{proof}

     \begin{proof}[Proof of Theorems~\ref{thm:formula}  and \ref{thm:formula2} (part 2)]
The above proposition allows us to deduce the last part of Theorems~\ref{thm:formula}  and \ref{thm:formula2}, namely the stated formula with the class $T_f$  given in terms of $\underline\De_1^f$ and $h_1$. 
\end{proof}

         \begin{rem}[Bounded homotopies]
A direct consequence of the above result is that $T_f$ vanishes if $f$ is a homeomorphism, as expected.          
The above description more generally shows that  $T_f$  
 vanishes if the homotopy is {\em boundede} in the sense that higher homotopy $K$ is small.  
\end{rem}

    \begin{cor}\label{cor:fake}
      If the fake diagonal $\De^f_1$ has support in $U_1\subset M_1\x M_1$, then $\T_f$ vanishes. More generally, if its lift $\hat{\underline\De}_1^f$ to $\La^0_{N_2}$
       is trivial in  $H_n(\La^0_{N_2},\La^0_{N_2}|_{M_1})$,
      then the map induced by $f$ on loop spaces respects the string and higher string coproducts. 
   \end{cor}

   \begin{proof}
 The first part follows directly from the above statement since $T_f$ consists only of canonically contractible loops in that case. 
     For the second part of the statement, if $[\hat{\underline \De}_1^f]$ lives in the image of $H_n(\La^0_{N_2}|_{M_1})$ in $H_n(\La^0_{N_2})$, then for any $A\in H_*(\La_1)$, 
$\al(\del_{N_2}(A\x I))=\al[M_1]\wedge_{\del I^r}^{N_2}A$ lives in  the image of $H_n(\R_{N_2}|_{M_1})$ in $H_n(\R_{N_2})$. As $L_{N_2}$ preserves $\R_{N_2}|_{M_1}$ (by Proposition~\ref{prop:L12}(iv)),  we see that the obstruction to invariance given in Theorem~\ref{thm:Hcommute0}, namely $int_{M_1}(L_{N_2}(\al(\del_{N_2}(A\x I))\x I))$ lives in half-constant loops, and thus will not contribute. 
     \end{proof}

\begin{ex}\label{ex:elementary} Suppose that $M_2=M_1/B$ for $B\subset M_1$ a contractible subspace, and suppose that $f:M_1\to M_2$ is the collapse map. Assume that $B\subset V\subset M_1$ lies inside a contractible neighborhood containing all the points $\eps$-distant from $B$. Then 
  $\De_1^f=\De_1\cup_{\De B}(B\x B)$  and $\underline\De_1^f$ is a submanifold of $U_1\cup V\x V$.
  
  Let $F:V\x I\to V$ be a homotopy between the identity and the projection to a chosen point $b_0\in B\subset V$. Then for $(v,[n],e(v'))\in \hat{\underline\De}_1^f$, where $f(v)=n=\tilde f(v')$, the tracks of the deformation define paths $$s\mapsto (F(v,s),[f\circ F(v,s)],e\circ F(v',s))$$ inside $\La^0_{N_2}$ from any $(v,[f(v)],e(v'))\in \hat{\underline \De}_1^f$ to $(b_0,[f(b_0)],e(b_0))\in \La^0_{N_2}|_{M_1}$.  It then follows from Corollary~\ref{cor:fake} that such a map $f$ respects the string and higher string coproducts. 
  \end{ex}

     \begin{rem}[Simple homotopy equivalences]
Elementary collapses, the building bloks of simple homotopy equivalences, are of the same form as the maps considered in the previous example. Building on Corollary~\ref{cor:fake}, one can show that $\T_f$ vanishes for such maps. A strategy to show that $\T_f$ vanishes on simple homotopy equivalences more generally, as expected given the result of \cite{NaeSaf}, would be to define $T_f$ more generally for maps between cell complexes, and then understand how it behaves under inverses and compositions.
       \end{rem}

         \begin{rem}[h-cobordisms and relationship to the work of Kenigsberg-Porcelli \cite{KenPor}]\label{rem:KP}
    To a homotopy equivalence $f:M_1\to M_2$, we have here associated an inclusion $N(f,e)\inc M_2\x D^k$ of a disc bundle over $M_1$ into a (trivial) disc bundle over $M_2$.     One can show that the space $M_2\x D^k\minus int(N(f,e))$ is an h-cobordism. In \cite{KenPor}, an obstruction for the homotopy invariance of the string coproduct is constructed using such an h-cobordism. More precisely, let $F: M_2\x D^k\x I\rar M_2\x D^k$ be a retraction of $M_2\x D^k$ to $N(f,e)$.  
    The obstruction in \cite{KenPor} is defined as the family of loops $F_{[0,t]}(n,x)$ (projected to $M_2$), parametrized by the intersection $F_t(n,x)=(n,x)$ (made transverse), where $F_t:=F(-,-,t)$. 

While this obstruction is not clearly the same as ours, the data used to define it, namely the retraction $F$, is in fact equivalent to the higher homotopy data we work with in the present paper. Indeed, 
write
$$\bF:  M_2\x D^k\x I\rar M_2\x D^k$$ for the map that further retracts to $M_1$, considered as the 0-section of $N(f,e)$, and let $\bF_t=\bF(-,-,t)$, where $\bF_1$ thus has image in $M_1\subset M_2\x D^k$. 
We can use $F$ to define a homotopy inverse $g:M_2\to M_1$ for $f$ by setting $g(n)=\bF_1(n,{\bf 0})$ that is taking the composition
$$g:M_2\cong M_2\x\{{\bf 0}\}\inc M_2\x D^k\xrightarrow{\bF_1} M_1.$$ The fact that it is a homotopy inverse is witnessed by the homotopies
$$h_1:\ M_1\x I\to M_1 \ \ \ \textrm{defined by } h_1(m,t)=\bF_1(f(m),te(m))$$
and
$$h_2:\ M_2\x I\to M_2  \ \ \ \textrm{defined by } h_2(n,t)=\bF_t(n,{\bf 0})_{M_2}.$$
Finally we also have the homotopy $K:M_1\x I^2\to M_2$ between $f\circ h_1$ and $h_2\circ (f\x \id)$ given by setting 
$$K(m,s,t)=\bF_{st+(1-s) }(f(m),(1-s)te(m))$$
varying both parameters. And one can likewise use the maps $(g,h_1,h_2,K)$ to define a retraction $F$.

We expect that our obstruction $T_f$ in fact defines the same class in $H_1(\La_2,M_2)$ as that of \cite{KenPor}.  
\end{rem}


  \begin{rem}[Related work]
    (i) The lens spaces $L(1,7)$ and $L(2,7)$ used in Naef's example in \cite{Nae21} also appeared in the work of Longoni-Salvatore, who showed that their 2-points configuration spaces $F_2(L(1,7))$ and $F_2(L(2,7))$ are not homotopy equivalent \cite{LonSal}. Note that these configuration spaces are exactly the complements of the diagonal in $M\x M$ for $M$ the one or the other manifold, and the difference between the diagonal and the fake diagonal is exactly what obstructs the map $f\x f$ to induce a map on configuration spaces from $\textrm{Conf}_2(M_1)$ to $\textrm{Conf}_2(M_2)$.
    The complement of  $\De_1^f$ appears in \cite{Rap11} in the context of studying the question of homotopy invariance of configuration spaces.
It is a natural question to ask whether a condition for a homotopy equivalence $f$ to respect the coproduct can be formulated in terms of the configuration spaces of two points in the manifolds. See also \cite{NaeWil} for further relationship between the string topology product and coproduct, and the configuration space of two points in the manifold. \\
  (ii)   A naive version of the assumption of Corollary~\ref{cor:fake} would be that the inclusion $\De_1\to \De_1^f$ is a homotopy equivalence, which  is equivalent to the condition that $\De_1^f\to \De_2$ is a homotopy equivalence, that is requiring that the product homotopy equivalence $f\x f: M_1\x M_1\to M_2\x M_2$ restricts to a homotopy equivalence on the inverse image of $\De_2$. As pointed out to us by Wolfgang L\"{u}ck, this would be a codimension $n$ analogue of Cappell's splitting theorem, that gives conditions involving the Whitehead torsion for this to hold when considering instead a codimension 1 submanifold of the target \cite{Cap76}.  \\
 (iii) The coproduct is part of a larger (expected) family of string topology operations parametrized by (the harmonic compactification of) the  moduli space of Riemann surfaces, see e.g.~\cite[Sec 6.6]{WahWes08} or \cite{DCP,HinWah2}, where non-homotopy invariance can only come from the compactification by \cite{Bia25}. Obstructions related to higher diagonals are to be expected for operations such as the families \cite[Sec 4]{Wah16}. The obstructions would relate to configuration spaces of any number of points in the manifold. 
  \end{rem}

\appendix

\section{Composing tubular embeddings}\label{app:tubcomp}

Given a composition of embeddings $M_1\ \sta{f_1}\inc\ M_2\
\sta{f_2}\inc\  M_3$ and choices of tubular neighborhoods for each
embedding, we will construct here an associated 
tubular neighborhood for the composed map and study its properties.

\begin{prop}\label{prop:comptub}
Consider 
\begin{equation}\label{equ:comp}
\xymatrix{N_1\ar[d]_{p_1}\ar@{^(->}[drr]^{\nu_1} &&  N_2\ar[d]_{p_2}\ar@{^(->}[drr]^{\nu_2} && \\
M_1 \ar@{^(->}[rr]^{f_1}&& M_2 \ar@{^(->}[rr]^{f_2}&& M_3.}
\end{equation}
with $f_1,f_2$ codimension $k$ and $l$ embeddings respectively, $N_1,N_2$ the associated normal bundles and $\nu_1,\nu_2$ chosen tubular neighborhoods. 
Then 
\begin{enumerate} 
\item The bundle $N_{12}:= N_{1}\oplus f_{1}^{\ast }N_{2}\rar M_{1}$ is isomorphic to the normal bundle of $f_2\circ f_1$. 
\item There is an isomorphism 
  $$\xymatrix{N_{12}= N_{1}\oplus f_{1}^{\ast }N_{2}  
       \ar@{-->}[rr]^\cong \ar[dr]^p & & \nu_1^*N_2 \cong
    N_2|_{\nu_1\! N_1}\ar@{^(->}[r] \ar[dl]& N_2\ar[d]^{p_2} \ar@{^(->}[dr]^{\nu_2}& \\
    M_1 \ar@{<-}[u]+<-5ex,-2ex>\ar@<-1ex>@/_/[rrr]_-{f_1}&N_1 \ar[l]_-{p_1}\ar@{^(->}[rr]^-{\nu_1}& & M_2\ar[r]_-{f_2} & M_3
  }$$
  as bundles over $N_1$, where $p$ is the natural projection,  with the property that the resulting composition 
$$\nu_{12}:=\nu_2\circ\hat\nu_1:N_{12}\xrightarrow{\hat \nu_1} N_2\xrightarrow{\nu_2}  M_3$$ is a tubular neighborhood for $ f_2\circ f_1:M_1\inc M_3$. 
Explicitly,  $\nu _{12}(m,V,W)=\nu_2(\nu _{1}(m,V),W^{\prime })$ for $W^{\prime }\in (N_{2})_{\nu _{1}(m,V)}$ the parallel transport of $W$ along the path $\nu _{1}(m,tV)$.
\item Let $\tau_1\in C^k(M_2,\nu_1(N_1)^c)$ and $\tau_2\in C^l(M_3,\nu_2(N_2)^c)$ be Thom classes for $(N_1,\nu_1)$ and $(N_2,\nu_2)$.  Let $p_2^*\tau_1$ be the image of $\tau_1$ along the maps
$$C^k(M_2,\nu_1(N_1)^c)\sta{p_2^*}\rar C^k(N_2,\hat\nu_{1}(N_{12})^c) \stackrel{\sim}\rar C^k(M_3,\nu_{12}(N_{12})^c),$$
where the second map is the quasi-isomorphism induced by $\nu_2$ and excision. 
 Then $$\tau_{12}:=p_2^*\tau_1\cup\tau_2\in C^{k+l}(M_3,\nu_{12}(N_{12})^c)$$ is a Thom class for the tubular embedding $\nu_{12}$. Moreover, the diagram 
$$\xymatrix{C_*(M_3)\ar[rr]^-{\llbracket\tau_2\cap\rrbracket} \ar[drr]_{\llbracket\tau_{12}\cap\rrbracket} && C_{*-l}(M_2)\ar[d]^{\llbracket\tau_1\cap\rrbracket}  \\
& & C_{*-k-l}(M_1) }$$
commutes up to homotopy, where 
\begin{align*}
\llbracket\tau_1\cap\rrbracket\colon & C_*(M_2)\to C_*(M_2,\nu_1(N_1)^c) \xrightarrow{[\tau_1\cap]} C_{*-k}(N_1)\arsim C_{*-k}(M_1)\\
\llbracket\tau_2\cap\rrbracket\colon & C_*(M_3)\to C_*(M_3,\nu_2(N_2)^c) \xrightarrow{[\tau_2\cap]} C_{*-l}(N_2)\arsim C_{*-k}(M_2)\\
\llbracket\tau_{12}\cap\rrbracket\colon & C_*(M_3)\to C_*(M_3,\nu_{12}(N_{12})^c) \xrightarrow{[\tau_{12}\cap]} C_{*-k-l}(N_{12})\arsim C_{*-k-l}(M_1)
\end{align*}
are the maps induced by capping with the respective Thom classes after  taking small simplices  (see \cite[A.2]{HinWah0} for details about the map $[\tau \cap]$), and retracting to the zero section at the end.
\end{enumerate}
\end{prop}

To prove the proposition, we will, as in the rest of the paper, pick Riemannian metrics. This allows us for example to consider the normal bundle of an embedding as a subbundle of the tangent bundle of the target, as we explain now: 
Let 
\begin{equation*}
f:M\hookrightarrow M^{\prime }
\end{equation*}%
be an embedding of manifolds, with $M$ compact and $M^{\prime }$ a 
Riemannian manifold. Let $p:N\rightarrow M$ be the normal bundle of $f$. 
The metric of $M'$ gives an identification of $N$ with a subbundle of $TM^{\prime}|_{M}:$ 
\begin{equation}\label{equ:splitb}
N\equiv TM^{\bot }=:\{(m,W)\ |\ m\in M\text{, }W\in T_{f(m)}M^{\prime }\text{
and }W\bot f_{\ast }T_{m}M\}
\end{equation}
and thus splits the exact sequence 
\begin{equation*}
0\rightarrow TM\rightarrow TM^{\prime }\rightarrow N\rightarrow 0
\end{equation*}%
of bundles over $M$.  We will identify $N$ with this subbundle.  

\medskip

The following lemma is an elementary consequence of the inverse function
theorem. 

\begin{lem}Let $N\to M$ be the normal bundle of the embedding $f: M\inc M'$ as above.  Suppose that  
\begin{equation*}
\nu :N\longrightarrow M^{\prime }
\end{equation*}%
has the following properties:
\begin{enumerate}[(i)]
\item $\nu (m,0)=f(m)$\textit{\ for all }$m\in M$\textit{.}

\item  The map 
\begin{equation*}
d\nu:T_{(m,0)}N\rightarrow T_{f(m)}M^{\prime }
\end{equation*}%
is surjective. 
\end{enumerate}
Then there exists $\varepsilon >0$ so that
the restriction of $\nu $ to $N_{\varepsilon }$ is a
tubular embedding of $M$ in $M^{\prime }$, for  $N_{\varepsilon }=\{(m,W)\in N\ |\ |W|<\varepsilon \}$. 
\end{lem}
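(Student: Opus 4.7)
The plan is to combine the inverse function theorem (applied pointwise on the zero section) with a compactness argument on $M$ to upgrade local injectivity and local diffeomorphism to a uniform tubular embedding on some $N_\varepsilon$.

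First I would observe that the rank of $N$ equals $\dim M'-\dim M$, so $T_{(m,0)}N$ and $T_{f(m)}M'$ have the same (finite) dimension. Hence the surjectivity in (ii) is equivalent to saying $d\nu_{(m,0)}$ is a linear isomorphism at every point of the zero section. The inverse function theorem then gives, for each $m\in M$, an open neighborhood $V_m\subset N$ of $(m,0)$ on which $\nu$ is a diffeomorphism onto its open image in $M'$.

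Next I would use compactness of $M$ to produce a uniform $\varepsilon_0>0$ such that every $(m,W)$ with $|W|<\varepsilon_0$ lies in some $V_{m'}$; this makes $\nu|_{N_{\varepsilon_0}}$ an everywhere-local diffeomorphism (in particular, an immersion and an open map).

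The remaining task is to extract a uniform $\varepsilon\le\varepsilon_0$ on which $\nu|_{N_\varepsilon}$ is globally injective. I would argue by contradiction: if no such $\varepsilon$ worked, there would be sequences $(m_i,W_i)\neq (m_i',W_i')$ in $N$ with $|W_i|,|W_i'|\to 0$ and $\nu(m_i,W_i)=\nu(m_i',W_i')$. By compactness of $M$, pass to subsequences so that $m_i\to m$ and $m_i'\to m'$. Continuity and condition (i) give $f(m)=\nu(m,0)=\nu(m',0)=f(m')$, and since $f$ is injective, $m=m'$. But then eventually both $(m_i,W_i)$ and $(m_i',W_i')$ lie in a single chart $V_m$ where $\nu$ is injective, a contradiction.

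For such an $\varepsilon$, the restriction $\nu|_{N_\varepsilon}$ is an injective immersion that is an open map onto its image, hence a homeomorphism onto an open neighborhood of $f(M)$ in $M'$; together with the surjectivity of $d\nu$ on the zero section this is the definition of a tubular embedding used throughout the paper. I expect the injectivity step, and in particular making sure the compactness argument really does collapse the two sequences into one chart where the inverse function theorem bites, to be the only subtle point; the remaining verifications are standard.
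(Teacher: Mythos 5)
Your argument is correct, and it is exactly the argument the paper has in mind: the paper states this lemma without proof, calling it ``an elementary consequence of the inverse function theorem,'' and your write-up simply fills in the standard details (isomorphism of $d\nu$ on the zero section by equality of dimensions, a tube-lemma/compactness step to get a uniform $\varepsilon_0$, and the sequential contradiction argument for uniform injectivity, using compactness of $M$ and injectivity of $f$). No gaps; this is the intended proof spelled out.
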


\begin{proof}[Proof of Proposition~\ref{prop:comptub}]
Choose a Riemannian metric on $M_{3}$.  Let $M_{1}$ and $M_{2}$ carry the
induced metrics. 
Statement (1) follows from (suppressing the inclusion maps $f_1,f_2$):
\begin{equation*}
N\equiv TM_{1}^{\bot }\subset TM_{3}\text{; \ }N_{1}\equiv TM_{1}^{\bot }\subset TM_{2}%
\text{; }N_{2}\equiv TM_{2}^{\bot }\subset TM_{3}, 
\end{equation*}%
so that for each $m\in M_{1},$ 
\begin{equation*}
N_{m}\equiv (N_{1})_{m}\oplus (N_{2})_{f(m)}.
\end{equation*}%
As a consequence we can and will identify%
\begin{equation*}
N\cong N_{12}\equiv \left\{ (m,V,W) \ \Big|\ 
\begin{array}{l}
m\in M_{1}\text{,} \\ 
V\in T_{f_{1}(m)}M_{2}\text{, }V\bot \, df_{1}T_{m}M_{1}\text{, \ and } \\ 
W\in T_{f(m)}M_{3}\text{, }W\bot \, df_{2}T_{f_{1}(m)}M_{2}%
\end{array}%
\right\}.
\end{equation*}%

\medskip

To prove (2), define $\hat\nu _{1}:N_{12}\rightarrow N_{2}$ by 
\begin{equation*}
\hat\nu _{1}(m,V,W)=(\nu _{1}(m,V),W^{\prime })
\end{equation*}%
where $W^{\prime }\in (N_{2})_{\nu _{1}(m,V)}$ is the vector obtained
from $W$ by parallel transport along the path $\nu _{1}(m,tV)$. The inverse is given by parallel transporting back, which gives the claimed isomorphism by uniqueness of such transports. 

We then apply the lemma in the case $M=M_{1}$, $M^{\prime }=M_{3}$, 
$f=f_{2}\circ f_{1}$, and $\nu =\nu _{12}$. 
We need to check condition (ii) of the lemma.  Fix $m\in M_{1}$.  Because $\nu_{1}:N_{1}\rightarrow M_{2}$ is a tubular embedding, the map $d\nu_{1}:T_{(m,0)}N_{1}\rightarrow T_{m}M_{2}$ is surjective,  and the
tangent space to $M_{2}$ in $M_{3}$ is in the image of $d\nu_{12}=:d(\nu_{2}\circ \hat\nu _{1})$.  Now let $W\in N_{2}$; that is, $W\in T_{m}M_{3}$, with $W\bot T_{m}M_{2}$.  We find by definition $\hat\nu _{1}(m,0,W)=(m,W)\in N_{2}$ and $\nu_{12} (m,0,W)=\nu _{2}\circ \hat\nu_{1}(m,0,W)=(m,W)$.  (When $V=0$, the parallel transport is trivial.) 
Since $T_{m}M_{3}$ is spanned by $T_{m}M_{2}$ and $d\nu _{2}N_{2}$, we
conclude that 
\begin{equation*}
\nu _{12*}:T_{(m,0)}N_{12}\rightarrow T_{m}M_{3}
\end{equation*}%
is surjective.

\medskip

To show the commutativity of the diagram in statement (3), we decompose the diagram as 
\begin{equation}\label{equ:triangle}
\xymatrix{C_*(M_3) \ar[r]\ar[dr] & C_*(M_3,N_2^c)  \ar[r]^-{[\tau_2\cap]} & C_{*-k}(N_2)\ar[d]\ar[r]^-{p_2} & C_{*-k}(M_2) \ar[d]\\
& C_*(M_3,N_{12}^c) \ar[dr]_{[\tau_{12}\cap]} & C_{*-k}(N_2,N_{12}^c) \ar[d]^{[p_2^*\tau_1\cap]} \ar[r]^{p_2} & C_{*-k}(M_2,N_1^c)  \ar[d]^{[\tau_1\cap]}  \\
&& C_{*-k-l}(N_{12})   \ar[r]^{p_2} \ar[dr]^{p_{12}} & C_{*-k-l}(N_1) \ar[d]^{p_1} \\
&&& C_{*-k-l}(M_1)
}\end{equation}
where we have suppressed the embeddings $\nu_i$, $\hat \nu_i$ for readability. 
Commutativity  follows
from the naturality of the cap product $[\tau\cap]$ 
and the compatibility of the cup and cap products.

We are left to check that $\tau_{12}$ is the Thom class of the composition, which follows from its definition as
$$\tau_{12}\cap [M_3]=p_2^*\tau_1\cap (\tau_2\cap [M_3])=p_2^*\tau_1\cap (f_2)_*[M_2]=(f_2)_*(f_2^*p_2^*\tau_1\cap [M_2])=(f_2)_*(\tau_1\cap [M_2])=(f_2)_*(f_1)_*[M_1],$$
in homology, where we used that $p_2\circ f_2: M_2\to N_2\to M_2$ is the  identity. 
\end{proof}

\bibliographystyle{plain}
\bibliography{biblio}

\end{document}